\subjclass[2010]{11F70}
\numberwithin{equation}{section}
\newcommand{\A}{\mathbb{A}}
\newcommand{\C}{\mathbb{C}}
\newcommand{\R}{\mathbb{R}}
\newcommand{\Q}{\mathbb{Q}}
\newcommand{\Z}{\mathbb{Z}}
\newcommand{\bs}{\backslash}
\newcommand{\Whit}{\mathcal{W}}
\newcommand{\WhitM}{\mathcal{W}^{\psi}}
\newcommand{\ind}{\operatorname{ind}}
\newcommand{\Ind}{\operatorname{Ind}}
\newcommand{\whitform}{A^\psi}
\newcommand{\trivchar}{\mathbf{1}}
\newcommand{\rest}{\big|}
\newcommand{\Irr}{\operatorname{Irr}}
\newcommand{\Tau}{\mathcal{T}}
\newcommand{\des}{\mathcal{D}}
\newcommand{\msqr}{\operatorname{msqr}}
\newcommand{\cusp}{\operatorname{cusp}}
\newcommand{\sqr}{\operatorname{sqr}}
\newcommand{\temp}{\operatorname{temp}}
\newcommand{\unit}{\operatorname{unit}}
\newcommand{\reg}{\operatorname{reg}}
\newcommand{\gen}{\operatorname{gen}}
\newcommand{\genpsi}[1]{{#1}\text{-}\operatorname{gen}}
\newcommand{\SL}{\operatorname{SL}}
\newcommand{\GL}{\operatorname{GL}}
\newcommand{\SO}{\operatorname{SO}}
\newcommand{\OO}{\operatorname{O}}
\newcommand{\Sp}{\operatorname{Sp}}
\newcommand{\GSp}{\operatorname{GSp}}
\newcommand{\Mp}{\operatorname{Mp}}
\newcommand{\Ad}{\operatorname{Ad}}
\newcommand{\Artin}{\operatorname{Artin}}
\newcommand{\ari}{\operatorname{ar}}
\newcommand{\an}{}
\newcommand{\Hom}{\operatorname{Hom}}
\newcommand{\mira}{\mathcal{P}}
\newcommand{\modulus}{\delta}
\newcommand{\abs}[1]{\left|{#1}\right|}
\newcommand{\wgt}{\nu}
\newcommand{\diag}{\operatorname{diag}}
\newcommand{\central}{\omega}
\newcommand{\sprod}[2]{\left\langle#1,#2\right\rangle}  % pairing
\newcommand{\sm}[4]{{\bigl(\begin{smallmatrix}{#1}&{#2}\\{#3}&{#4}\end{smallmatrix}\bigr)}}
\newcommand{\Thetaone}{\overrightarrow{\Theta} \!}                                  % big theta from SO(2n+1) to Mp_n
\newcommand{\Thetatwo}{\overleftarrow{\Theta} \!}                                   % big theta from Mp_n to SO(2n+1)
\newcommand{\thetaone}{\overrightarrow{\theta} \! \!}                               % small theta from SO(2n+1) to Mp_n
\newcommand{\thetatwo}{\overleftarrow{\theta} \! \!}                                % small theta from Mp_n to SO(2n+1)
\newcommand{\JSlift}{\mathcal{L}}                                                % Jiang-Soudry lift from SO(2n+1) to GL(2n)
\newcommand{\Lie}{\operatorname{Lie}}
\newcommand{\Sym}{\operatorname{Sym}}
\newcommand{\eps}{\varepsilon}
\newcommand{\WD}{\mathit{WD}}
\newcommand{\DL}{\operatorname{DL}}
\newcommand{\bes}{{\mathcal B}}
\newcommand{\smth}{\operatorname{sm}}
\newcommand{\OG}{\SO(2n+1)}
\newcommand{\OP}{P^{O}}
\newcommand{\OM}{M^{O}}
\newcommand{\OU}{U^{O}}
\newcommand{\olevi}{\varrho^{O}}
\newcommand{\Ncirc}{N_{\GL_n}^{\circ}}
\newcommand{\ON}{N^{O}}
\newcommand{\inj}{\lambda}
\newcommand{\refcno}{}
\newtheorem{theorem}{Theorem}[section]
\newtheorem{lemma}[theorem]{Lemma}
\newtheorem{proposition}[theorem]{Proposition}%[subsection]
\newtheorem{corollary}[theorem]{Corollary}%[subsection]
\theoremstyle{remark}
\newtheorem{remark}[theorem]{Remark}%[subsection]
\begin{document}

\title[Formal degrees]{On the formal degrees of square-integrable representations of odd special orthogonal and metaplectic groups}
\author{Atsushi Ichino}
\address{Department of Mathematics, Kyoto University, Kyoto 606-8502, Japan}
\email{ichino@math.kyoto-u.ac.jp}
\author{Erez Lapid}
\address{Department of Mathematics, Weizmann Institute of Science, Rehovot 76100, Israel}
\email{erez.m.lapid@gmail.com}
\author{Zhengyu Mao}
\address{Department of Mathematics and Computer Science, Rutgers University, Newark, NJ 07102, USA}
\email{zmao@rutgers.edu}
\thanks{First named author partially supported by JSPS Grant-in-Aid for Scientific Research (B) 26287003 and by a grant from the Sumitomo Foundation}
\thanks{Second and third named authors partially supported by U.S.--Israel Binational Science Foundation Grant \# 057/2008}
\thanks{Second named author partially supported by a grant from the Minerva Stiftung}
\thanks{Third named author partially supported by NSF grant DMS 1000636 and by a fellowship from the Simons Foundation}
\date{\today}
\begin{abstract}
The formal degree conjecture relates the formal degree of an irreducible square-integrable representation of a reductive group over a local field to the special value of the adjoint $\gamma$-factor of its $L$-parameter.
In this paper, we prove the formal degree conjecture for odd special orthogonal and metaplectic groups in the generic case, which combined with Arthur's work on the local Langlands correspondence implies the conjecture in full generality.
\end{abstract}
\maketitle

\setcounter{tocdepth}{1}
\tableofcontents

\section{Introduction}

Let $F$ be a non-archimedean local field of characteristic $0$ and $G$ a connected reductive algebraic group over $F$.
We write $G = G(F)$ by abuse of notation.
Let $Z$ be the maximal $F$-split torus of the center of $G$.
Let $\pi$ be an irreducible square-integrable representation of $G$.
Namely, $\pi$ is an irreducible smooth representation of $G$ with unitary central character such that the absolute value
of any matrix coefficient of $\pi$ is square-integrable over $Z \backslash G$.
Let $\pi^{\vee}$ be the contragredient of $\pi$, and $[\cdot, \cdot]$ the standard pairing on $\pi \times \pi^{\vee}$.
Recall that the \emph{formal degree} $d_{\pi}$ of $\pi$ is the Haar measure on $Z\bs G$ for which
\[
 \int_{Z \bs G} [\pi(g) v_1, v_1'] [\pi(g^{-1}) v_2, v_2'] \, d_\pi g = [v_1, v_2'] [v_2, v_1']
\]
for $v_1, v_2 \in \pi$ and $v_1', v_2' \in \pi^{\vee}$.

The formal degree $d_{\pi}$ is a representation-theoretic invariant of $\pi$ introduced by Harish-Chandra.
It plays an important role in the harmonic analysis on $G$.
Fix a non-trivial character $\psi$ of $F$. One can attach to $\psi$ a Haar measure $d_\psi$ on $Z\bs G$ (see \cite{MR1458303, MR2425185}).
The formal degree conjecture \cite{MR2350057, MR2425185}, which is a far-reaching generalization of the Weyl dimension formula, relates the
constant $d_{\psi}/d_{\pi}$ to the adjoint $\gamma$-factor $\gamma(s, \pi, \Ad, \psi)$.
The latter is an arithmetic invariant of $\pi$ which in general is only defined via the (conjectural) local Langlands correspondence
for $G$ as an Artin factor. Henceforth we will write $\gamma^{\ari}(s, \pi, \Ad, \psi)$ to emphasize that.
In certain cases one can define instead, without assuming the local Langlands correspondence, an analytic factor $\gamma^{\an}(s, \pi, \Ad, \psi)$
via the theory of Rankin--Selberg integrals or the Langlands--Shahidi method.\footnote{These factors satisfy certain conditions which determine them
uniquely.}
Conjecturally,
\begin{equation} \label{an=ar}
\gamma^{\ari}(s, \pi, \Ad, \psi)=\gamma^{\an}(s, \pi, \Ad, \psi)
\end{equation}
so that one can phrase the formal degree conjecture in terms of $\gamma^{\an}(s, \pi, \Ad, \psi)$.
The equality \eqref{an=ar} is known in some cases, most importantly for Rankin--Selberg convolutions.

In this paper, we study the formal degree conjecture for the general linear group $\GL_n$, the metaplectic group $\Mp_n$,
i.e., the unique non-split double cover of the symplectic group $\Sp_n$, and the odd special orthogonal groups $\SO(2n+1)$.

In \S \ref{s: gl}, we consider the case of $\GL_n$.
The formal degree conjecture is already known in this case by using either an explicit formula of Silberger--Zink
(see \cite[Theorem 6.5]{MR2131140}, \cite[Theorem 3.1]{MR2350057}) or the Langlands--Shahidi method as in \cite[\S 4]{MR2350057}.
We give a new proof based on the Rankin--Selberg method (see Theorem \ref{thm: GLn}).
In this case the equality \eqref{an=ar} is a consequence of the local Langlands correspondence for $\GL_n$ \cite{MR1876802, MR1738446, MR3049932}.

In \S\ref{s: mp}, we consider the case of $\Mp_n$.
Strictly speaking, $\Mp_n$ is not an algebraic group, but the conjecture for $\Mp_n$ is formulated (slightly inaccurately) in \cite[\S 14]{MR3166215}.
The work of Jiang--Soudry \cite{MR1983781, MR2058617}, which is based on the descent method of Ginzburg--Rallis--Soudry \cite{MR1671452, MR1954940, MR2848523},
gives for a suitable choice of a non-generate character $\psi_{\tilde N}$, a one-to-one correspondence between
the set $\Irr_{\sqr,\genpsi{\psi_{\tilde N}}}\Mp_n$ of irreducible $\psi_{\tilde N}$-generic square-integrable representations of $\Mp_n$
and the set $\Irr_{\msqr}\GL_{2n}$ of irreducible representations of $\GL_{2n}$ parabolically induced from $\pi_1 \otimes \dots \otimes \pi_k$,
where $\pi_1,\dots,\pi_k$ are pairwise inequivalent irreducible square-integrable representations of
$\GL_{2n_1},\dots,\GL_{2n_k}$ respectively with $n_1+\dots+n_k=n$ and $L(0,\pi_i,\wedge^2)=\infty$ for all $i$.
We show that if $\tilde\pi\in\Irr_{\sqr,\genpsi{\psi_{\tilde N}}}\Mp_n$ is the descent of $\pi\in\Irr_{\msqr}\GL_{2n}$ then
\[
d_\psi=\abs{2}^n 2^k \gamma^{\an}(1,\pi,\Sym^2,\psi)d_{\tilde\pi}=\abs{2}^n 2^k \gamma^{\ari}(1,\pi,\Sym^2,\psi)d_{\tilde\pi}
\]
(see Theorem \ref{thm: Mpn}).
Here $\gamma^{\an}(s,\pi,\Sym^2,\psi)$ is the $\gamma$-factor defined by Shahidi \cite{MR1070599}.
The proof is based on the Main Identity of the second and third named authors \cite{1404.2905}.

In \S\ref{s: so}, we consider the split odd special orthogonal group $\SO(2n+1)$.
The theta correspondence gives a bijection between $\Irr_{\sqr,\genpsi{\psi_{\tilde N}}}\Mp_n$
and $\Irr_{\sqr,\gen}\SO(2n+1)$. (Since $\SO(2n+1)$ is adjoint, we do not need to specify the generic character.)
Using the above result for $\Mp_n$ and the result of \cite{MR3166215} we conclude that for any $\sigma\in\Irr_{\sqr,\gen}\SO(2n+1)$
\begin{equation} \label{eq: so2n+1}
 d_\psi = 2^k \gamma^{\an}(1, \pi, \Sym^2, \psi)d_\sigma=2^k \gamma^{\ari}(1, \pi, \Sym^2, \psi)d_\sigma
\end{equation}
where $\pi\in \Irr_{\msqr} \GL_{2n}$ is the Jiang--Soudry lift of $\sigma$ \cite{MR2058617}.
Actually, some of the results in \S\ref{s: so} are necessary for \S \ref{s: mp}, as are results about
the theta correspondence due to Gan--Savin \cite{MR2999299}.

Under the local Langlands correspondence for $\SO(2n+1)$, which would follow from Arthur's work \cite{MR3135650} once its
prerequisites are established,
we get the formal degree conjecture for $\SO(2n+1)$ (as well as for $\Mp_n$) in full generality in \S\ref{sec: odd general}.
Indeed, given a square-integrable $L$-packet $\mathfrak{P}$, the relation \eqref{eq: so2n+1} is the required relation
for the generic member of $\mathfrak{P}$ and as explained in \cite[\S9]{MR1070599}, the endoscopic character relations show that
the formal degree is constant on $\mathfrak{P}$.
Similarly, a similar statement for non-split $\SO(2n+1)$ would follow from results announced by Arthur.

We remark that in \cite{MR2350057} a different method using stable endoscopy was used to study the formal degrees of stable square-integrable
representations for odd unitary groups. However, it seems non-trivial to use this method to study non-stable representations.

We mention some prior results confirming the formal degree conjecture for inner forms of $\GL_n$ and $\SL_n$ \cite{MR2350057},
unipotent discrete series representations of adjoint unramified groups \cite{MR1748271, MR2891878, 1310.7790},
depth-zero supercuspidal representations of pure inner forms of unramified groups \cite{MR2480618}, \cite[\S 3.5]{MR2350057},
and certain positive-depth supercuspidal representations of tamely ramified groups \cite{MR2730575, MR3164986, 1209.1720}.
The formal degree conjecture is also known for $\operatorname{U}(3)$, $\GSp_2$, $\Sp_2$ \cite{MR3166215}, and $\Mp_1$ \cite{MR2876383}.

The proof of our results is based on a comparison of two inner products on the Whittaker model.
One is the integration over $ZN\bs G$ (where $N$ is a maximal unipotent subgroup of $G$) which makes sense for any irreducible
generic square-integrable representation.
The other comes from Rankin--Selberg integrals and it involves the group $\GL_n$ in the background.
By a formal computation, the constant of proportionality is precisely the formal degree up to a sign.
On the other hand, this constant is the local $\gamma$-factors up to precise factors which in the case of classical groups and the metaplectic group
were worked out by Kaplan in \cite{Kaplan}.

In Appendix \ref{app: globalization} we provide a globalization result, based on a result of Sakellaridis--Venkatesh \cite{1203.0039}
and bounds towards the Ramanujan conjecture on $\GL_n$ by Rudnick--Luo--Sarnak \cite{MR1703764} (extended to the ramified case independently by
Bergeron--Clozel \cite{MR2245761} and M\"uller--Speh \cite{MR2053600}). As usual, it enables us to use global methods for local results.

Finally, in Appendix \ref{app: real} we consider the real case.
As we mentioned above, in the $p$-adic case, our result on the formal degree conjecture for $\Mp_n$ is a consequence of the Main Identity \cite{1404.2905}.
Conversely, in the real case, we will deduce the Main Identity (in the square-integrable case) from the formal degree conjecture, which is a reformulation of Harish-Chandra's formula for formal degrees.

\subsection*{Acknowledgement}

We would like to thank Joseph Bernstein, Wee Teck Gan, Eyal Kaplan, Gordan Savin, David Soudry and Marko Tadi\'c for useful discussions.

\subsection*{Notation}

Let $F$ be a non-archimedean local field of characteristic $0$ and of residual characteristic $p$.
We denote by $\mathcal{O}$ the ring of integers of $F$, by $q$ the cardinality of the residue field of $F$, and by $\abs{\, \cdot \,}$
the absolute value on $F$.
Let $W_F$ be the Weil group of $F$ and $\WD_F = W_F \times \SL(2,\C)$ the Weil--Deligne group of $F$.
Fix a non-trivial character $\psi$ of $F$.
For a linear algebraic group $G$ over $F$, we identify $G$ with the group of its $F$-rational points $G(F)$.
We denote by $e$ the identity element of $G$ and by $\modulus_G$ the modulus character of $G$.
If $G$ is connected and reductive, we denote by $Z$ the maximal $F$-split torus of the center of $G$.
For a positive integer $m$, we denote by $I_m$ the identity matrix in $\GL_m$.
Set
\[
 w_m = \left(\begin{smallmatrix} & & 1 \\ & \iddots & \\ 1 & & \end{smallmatrix}\right) \in \GL_m.
\]
Set $x^* = w_m {}^t x^{-1} w_m$ for $x \in \GL_m$.
We denote by $\mathrm{M}_m$ the $F$-vector space of $m \times m$ matrices.
Set $\mathfrak{s}_m = \{ x \in \mathrm{M}_m : \breve{x} = x \}$, where $\breve{x} = w_m {}^t x w_m$.

Given a connected algebraic subgroup $G$ of $\GL_m$ defined over $F$ we can endow $G$ with a Haar measure $d_\psi=d_\psi^G$
by considering the lattice of integral matrices in $\Lie G$ and using it to determine a gauge form (up to an element of $\mathcal{O}^*$)
which in turn (together with the self-dual Haar measure on $F$ with respect to $\psi$) gives rise to a Haar measure on $G$.
Note that this coincides with the Haar measure defined in \cite{MR1458303, MR2425185}
(using a Chevalley basis) for the symplectic group, but not for the orthogonal group
(with their canonical linear representations).
If $H$ is a subgroup of $G$ we take the quotient ``measure'' $d_\psi^{H \backslash G} = \frac{d_\psi^G}{d_\psi^H}$ on $H \backslash G$.
(Of course, strictly speaking this is a measure only when $\modulus_G\rest_H=\modulus_H$.)
When the choice of Haar measure is unimportant (e.g., for convergence estimates) we omit the subscript $\psi$ from the notation.

We denote by $\Irr G$ the set of equivalence classes of irreducible smooth representations of $G$.
From now on, we do not distinguish representations and their equivalence classes.
Let
\[
 \Irr_{\sqr} G \subset \Irr_{\temp} G \subset \Irr_{\unit} G \subset \Irr G
\]
be the chain of subsets consisting of irreducible square-integrable (resp.~tempered, unitarizable) representations.
We denote by $\Irr_{\cusp} G$ the subset of irreducible supercuspidal representations of $G$.
If $G$ is quasi-split over $F$, $N$ is a maximal unipotent subgroup of $G$ defined over $F$ and $\psi_N$ is a non-degenerate
character of $N$, we denote by $\Irr_{\genpsi{\psi_N}} G$ the subset of irreducible $\psi_N$-generic representations of $G$.
If $G/Z$ is adjoint, then $\psi_N$ is unique up to conjugacy, so we omit $\psi_N$ from the notation.
If $p_1,p_2,\dots$ are properties of representations, we write $\Irr_{p_1,p_2,\dots}G=\Irr_{p_1}G\cap\Irr_{p_2}G\cap\cdots$.
If $\tilde G = \Mp_n$ we will only consider genuine representations.

Let $\pi\in\Irr\GL_m$ and let $\phi$ be the $m$-dimensional representation of $\WD_F$ corresponding to $\pi$ under the local
Langlands correspondence \cite{MR1876802, MR1738446, MR3049932}.
For any algebraic representation $r$ of $\GL_m(\C)$, write
\[
 \gamma^{\ari}(s, \pi, r, \psi) = \eps^{\DL}(s, r\circ\phi, \psi) \cdot \frac{L^{\Artin}(1-s, r^{\vee}\circ\phi)}{L^{\Artin}(s, r\circ\phi)}
\]
where $r^{\vee}$ is the contragredient of $r$ and $L^{\Artin}$ and $\eps^{\DL}$ are the factors attached to a representation of $\WD_F$ by
Artin and Deligne--Langlands respectively (see \cite{MR546607}).
If $r$ is the standard representation, we suppress $r$ from the notation.
The properties of local Langlands correspondence for $\GL_m$ guarantee that
\[
\gamma^{\ari}(s,\pi_1\times\pi_2,\psi)=\gamma^{\an}(s,\pi_1\times\pi_2,\psi)
\]
for any $\pi_i\in\Irr\GL_{m_i}$, $i=1,2$ where the left-hand side is the $\gamma$-factor attached to the tensor product representation of
$\GL_{m_1}(\C) \times \GL_{m_2}(\C)$ and the right-hand is the $\gamma$-factor defined by Jacquet--Piatetski-Shapiro--Shalika \cite{MR701565}
(or Shahidi \cite{MR1070599}; they coincide, cf.~\cite{MR729755}).

For non-negative real-valued functions $a$ and $b$ defined on a set $X$ we write $a(x) \ll b(x)$ if there exists a constant $c>0$ such that
$a(x) \le c b(x)$ for all $x\in X$. If $c$ depends on a parameter $d$, we write $a(x) \ll_d b(x)$.

\section{General linear groups} \label{s: gl}

Let $n$ be a positive integer and $G = \GL_n$ the general linear group of rank $n$.
Let $N$ be the maximal unipotent subgroup of $G$ consisting of upper unitriangular matrices
and $B=B_n$ the normalizer of $N$, i.e., the group of upper triangular matrices in $G$.
Let $\mira$ be the mirabolic subgroup of $G$ consisting of matrices whose last row is $(0,\ldots,0,1)$.
We denote by $Z$ the center of $G$.

Let $\pi\in\Irr_{\gen}G$.
We realize $\pi$ on its Whittaker model $\Whit^{\psi}(\pi)$ with respect to the non-degenerate character $\psi_N$ of $N$ given by
$\psi_N(u) = \psi(u_{1,2} + \cdots + u_{n-1,n})$. Assume that $\pi$ is unitarizable.
We define a non-degenerate $G$-invariant bilinear form $[\cdot, \cdot]_\pi$ on $\Whit^{\psi}(\pi) \times \Whit^{\psi^{-1}}(\pi^\vee)$ by
\[
[W,W']_\pi=\int_{N\bs\mira}W(p)W'(p)\, d_\psi p,
\]
where the integral converges absolutely (see \cite{MR748505}).
Assume that $\pi$ is square-integrable.
By \cite[Lemma 4.4]{LMao5} we have
\begin{equation} \label{FW}
\int_N[\pi(u)W,W']_\pi\psi_N(u)^{-1}\, d_\psi u=W(e)W'(e)
\end{equation}
for $W\in\Whit^{\psi}(\pi)$ and $W'\in\Whit^{\psi^{-1}}(\pi^\vee)$,
where the integral converges absolutely (see \cite[Proposition II.4.5]{MR1989693}).
We can also define another non-degenerate $G$-invariant bilinear form $(\cdot, \cdot)_\pi$ on $\Whit^{\psi}(\pi) \times \Whit^{\psi^{-1}}(\pi^\vee)$ by
\[
(W,W')_\pi=\int_{ZN\bs G}W(g)W'(g)\, d_\psi g,
\]
where the integral converges absolutely (see \cite{MR2495561}).
By \cite[Lemma A.1]{MR2930996}\footnote{The result in [loc.~cit.] is stated with respect to the Haar measures
\[
 \prod_{j=1}^n L(j,\trivchar_{F^*}) \cdot d_\psi g, \qquad d_\psi u, \qquad
 \prod_{j=1}^{n-1} L(j,\trivchar_{F^*}) \cdot d_\psi p, \qquad L(1,\trivchar_{F^*})\cdot d_\psi z
\]
on $G$, $N$, $\mira$, $Z$ respectively.},
which was suggested by Jacquet, we have
\begin{equation} \label{prop}
(W,W')_\pi=n\central_\pi(-1)^{n-1}\gamma^{\an}(1,\pi,\Ad,\psi)[W,W']_\pi
\end{equation}
for $W\in\Whit^{\psi}(\pi)$ and $W'\in\Whit^{\psi^{-1}}(\pi^\vee)$, where $\central_\pi$ is the central character of $\pi$ and
$\Ad$ is the adjoint representation of $\GL_n(\C)$ on $\mathfrak{sl}_n(\C)$. Here
\[
\gamma^{\an}(s,\pi,\Ad,\psi)=\gamma^{\an}(s,\pi\times\pi^\vee,\psi)/\gamma^{\an}(s,\trivchar_{F^*},\psi)
\]
where we recall that the numerator is the $\gamma$-factor defined by Jacquet--Piatetski-Shapiro--Shalika \cite{MR701565}
and the denominator is Tate's $\gamma$-factor.
Of course, by the local Langlands correspondence for $\GL_n$ we have
\begin{equation} \label{eq: Adanarequality}
\gamma^{\an}(s,\pi,\Ad,\psi)=\gamma^{\ari}(s,\pi,\Ad,\psi).
\end{equation}

\begin{theorem} \label{thm: GLn}
Let $\pi \in \Irr_{\sqr} \GL_n$ and let $d_{\pi}$ be its formal degree.
Then
\[
d_\psi=n\central_\pi(-1)^{n-1}\gamma^{\an}(1,\pi,\Ad,\psi)d_\pi=
n\central_\pi(-1)^{n-1}\gamma^{\ari}(1,\pi,\Ad,\psi)d_\pi.
\]
\end{theorem}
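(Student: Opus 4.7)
The plan is to pin down $d_\psi/d_\pi$ by reconciling the Schur orthogonality identity that defines $d_\pi$ with \eqref{prop}, which already expresses the constant of proportionality between the two $G$-invariant pairings $[\cdot,\cdot]_\pi$ and $(\cdot,\cdot)_\pi$ on $\Whit^\psi(\pi)\times\Whit^{\psi^{-1}}(\pi^\vee)$ in terms of $\gamma^{\an}(1,\pi,\Ad,\psi)$. The whole proof reduces to two successive applications of the Fourier-type formula \eqref{FW}.

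First, I would realize the formal-degree identity using the invariant pairing $[\cdot,\cdot]_\pi$ itself (both sides scale identically under rescaling the pairing, so the identity is independent of this choice). Specialize it to $v_1=\pi(u)W_1$, $v_1'=W_1'$, $v_2=W_2$, $v_2'=W_2'$ and integrate in $u\in N$ against $\psi_N(u)^{-1}$. On the right, \eqref{FW} collapses the $N$-integral to $W_1(e)W_2'(e)[W_2,W_1']_\pi$. On the left, $G$-invariance rewrites $[\pi(gu)W_1,W_1']_\pi$ as $[\pi(u)W_1,\pi^\vee(g^{-1})W_1']_\pi$, and \eqref{FW} turns the inner $N$-integral into $W_1(e)W_1'(g^{-1})$. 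Canceling $W_1(e)$ and substituting $g\mapsto g^{-1}$ (an involution preserving the Haar measure on the unimodular space $Z\bs G$) yields the key intermediate identity
\[
\int_{Z\bs G} W_1'(g)\,[\pi(g)W_2,W_2']_\pi\,d_\pi g = W_2'(e)\,[W_2,W_1']_\pi.
\]

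Second, I would unfold this left-hand side by decomposing $\int_{Z\bs G}=\int_{ZN\bs G}\int_N$. The factor $\psi_N(u)^{-1}$ produced by $W_1'(ug)=\psi_N(u)^{-1}W_1'(g)$ pairs with the matrix coefficient, and \eqref{FW} applied with $W=\pi(g)W_2$ (so that $W(e)=W_2(g)$) collapses the inner integral to $W_2(g)W_2'(e)$. What survives is $W_2'(e)\int_{ZN\bs G}W_1'(g)W_2(g)$ against the quotient measure $d_\pi^{Z\bs G}/d_\psi^N$, which differs from the quotient measure $d_\psi^{ZN\bs G}$ appearing in $(\cdot,\cdot)_\pi$ by the factor $d_\pi/d_\psi$. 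Equating with the displayed identity and canceling $W_2'(e)$ gives $(W_2,W_1')_\pi=(d_\psi/d_\pi)\,[W_2,W_1']_\pi$, so comparing with \eqref{prop} forces
\[
d_\psi/d_\pi = n\,\central_\pi(-1)^{n-1}\,\gamma^{\an}(1,\pi,\Ad,\psi);
\]
the second equality in the theorem then follows from \eqref{eq: Adanarequality}.

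The only delicate point is the Fubini exchange in the unfolding, since $W_1'(g)[\pi(g)W_2,W_2']_\pi$ is not manifestly absolutely integrable on $Z\bs G$. The remedy is to perform the inner $N$-integration first: the absolute convergence of $\int_N|[\pi(u)W,W']_\pi|\,du$ for square-integrable $\pi$ (\cite[Proposition II.4.5]{MR1989693}) together with the convergence of $(\cdot,\cdot)_\pi$ on $ZN\bs G$ (\cite{MR2495561})---both facts already invoked in the paragraph preceding the theorem---reduce the outer integration to the absolutely convergent pairing $(\cdot,\cdot)_\pi$, legitimizing the formal manipulation.
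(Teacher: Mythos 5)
Your computation is correct and takes a genuinely different route from the paper. The paper expands $[\pi(g)W_1,W_1']_\pi$ as the $N\backslash\mira$-integral and then unfolds; you instead never touch the $N\backslash\mira$ realization, working solely from the defining Schur identity for $d_\pi$, the Fourier inversion \eqref{FW}, and the $N$-equivariance of Whittaker functions. The intermediate identity $\int_{Z\bs G} W_1'(g)[\pi(g)W_2,W_2']_\pi\, d_\pi g = W_2'(e)[W_2,W_1']_\pi$ is a clean waypoint (a sort of Schur orthogonality against a Whittaker vector), and the second application of \eqref{FW} after unfolding over $ZN\bs G\times N$ lands directly on $(W_2,W_1')_\pi$ with the measure-ratio $d_\pi/d_\psi$ bookkept correctly. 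Conceptually this is slicker than the paper's double unfolding over $N\bs\mira$ and $ZN\bs G\times N$.

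However, the convergence discussion is a genuine gap. You identify only one Fubini exchange as delicate (the unfolding of the intermediate identity), but the derivation of the intermediate identity itself already contains an unjustified swap of $\int_N$ and $\int_{Z\bs G}$: you pass from $\int_N\psi_N(u)^{-1}\bigl(\int_{Z\bs G}\cdots d_\pi g\bigr)du$ to $\int_{Z\bs G}\bigl(\int_N\cdots du\bigr)d_\pi g$, and this is not innocuous — a naive Cauchy--Schwarz bound in $g$ is uniform in $u$ and therefore diverges over $N$. Moreover, the remedy you offer for the unfolding does not actually establish absolute convergence: the two facts you cite are (i) pointwise convergence, for fixed $W,W'$, of $\int_N|[\pi(u)W,W']_\pi|\,du$, and (ii) convergence of $\int_{ZN\bs G}|W(g)W'(g)|\,dg$. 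What is needed is that $\int_{ZN\bs G}|W_1'(g)|\cdot\bigl(\int_N|[\pi(u)\pi(g)W_2,W_2']_\pi|\,du\bigr)\,dg<\infty$, and for this one must control the inner $N$-integral as a function of $g$ on $ZN\bs G$ — which is precisely what the paper's Iwasawa-decomposition argument does, using Harish-Chandra Schwartz-space bounds on the matrix coefficient, \cite[Proposition II.4.5]{MR1989693} to beat down the $N$-integral by $\modulus_{B_n}^{1/2}\sigma^{-d'}$, and Cauchy--Schwarz against the $L^2$-norm of $W_1'$ on $ZN\bs G$. The bound that emerges is \emph{not} $|W_2(g)|$, so "reducing to the convergent pairing $(\cdot,\cdot)_\pi$" is not what happens. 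The estimates occupy roughly half the paper's proof; your parenthetical cannot substitute for them, and without them both interchanges remain formal.
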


\begin{proof}
By \eqref{eq: Adanarequality} it is enough to prove the first equality.
Recall that $d_\pi$ is defined by the relation
\[
\int_{Z\bs G}[\pi(g)W_1,W_1']_\pi[\pi(g^{-1})W_2,W_2']_\pi\, d_\pi g=[W_1,W_2']_\pi[W_2,W_1']_\pi
\]
for $W_1,W_2\in\Whit^{\psi}(\pi)$ and $W_1',W_2'\in\Whit^{\psi^{-1}}(\pi^\vee)$.
Let us compute
\[
\int_{Z\bs G}[\pi(g)W_1,W_1']_\pi[\pi(g^{-1})W_2,W_2']_\pi\, d_\psi g,
\]
i.e.,
\[
\int_{Z\bs G} \left(\int_{N\bs\mira}W_1(pg)W_1'(p)\, d_\psi p \right) [\pi(g^{-1})W_2,W_2']_\pi\, d_\psi g.
\]
We will soon see that the double integral converges absolutely.
Therefore we can interchange the order of integration to get
\[
\int_{N\bs\mira}\int_{Z\bs G}W_1(pg)W_1'(p)[\pi(g^{-1})W_2,W_2']_\pi\, d_\psi g\, d_\psi p.
\]
Changing the variable $g\mapsto p^{-1}g$, we get
\begin{align*}
 & \int_{N\bs\mira}\int_{Z\bs G}W_1(g)W_1'(p)[\pi(g^{-1}p)W_2,W_2']_\pi\, d_\psi g\, d_\psi p \\
 & = \int_{N\bs\mira}\int_{ZN\bs G}\int_N\psi_N(u)W_1(g)W_1'(p)[\pi(g^{-1}u^{-1}p)W_2,W_2']_\pi\, d_\psi u\, d_\psi g\, d_\psi p \\
 & = \int_{N\bs\mira}\int_{ZN\bs G}\int_N W_1(g)W_1'(p)[\pi(up)W_2,\pi^{\vee}(g)W_2']_\pi\psi_N(u)^{-1} \, d_\psi u\, d_\psi g\, d_\psi p.
\end{align*}
By \eqref{FW}, this is equal to
\[
\int_{N\bs\mira}\int_{ZN\bs G}W_1(g)W_1'(p)W_2(p)W_2'(g)\, d_\psi g\, d_\psi p=
(W_1,W_2')_\pi[W_2,W_1']_\pi.
\]
Now the theorem follows from \eqref{prop}.

To justify the manipulation, we show the convergence of the triple integral
\[
\int_{N\bs\mira}\int_{ZN\bs G}\int_N\abs{W_1(g)W_1'(p)[\pi(g^{-1}u^{-1}p)W_2,W_2']_\pi} du\, dg\, dp.
\]
Let $K=K_n = \GL_n(\mathcal{O})$ be the standard maximal compact subgroup of $G$.
We embed $\GL_{n-1}$ into $\GL_n$ by $g\mapsto\sm g{}{}1$.
Let $T$ be the maximal torus of $\GL_{n-1}$ consisting of diagonal matrices.
Using the Iwasawa decomposition, we write the above integral as
\begin{multline*}
\int_{K_{n-1}}\int_{K_n}\int_T\int_T\int_N
\abs{W_1(t_1k_1)W_1'(t_2k_2)[\pi(k_1^{-1}t_1^{-1}ut_2k_2)W_2,W_2']_\pi} \\
 \times \modulus_{B_n}(t_1)^{-1}\modulus_{B_{n-1}}(t_2)^{-1}\, du\, dt_1\, dt_2\, dk_1\, dk_2.
\end{multline*}
Changing the variable $u \mapsto t_1 u t_1^{-1}$, we get
\begin{multline*}
\int_{K_{n-1}}\int_{K_n}\int_T\int_T\int_N\abs{W_1(t_1k_1)W_1'(t_2k_2)[\pi(k_1^{-1}ut_1^{-1}t_2k_2)W_2,W_2']_\pi}\\
\modulus_{B_{n-1}}(t_2)^{-1}\, du\, dt_1\, dt_2\, dk_1\, dk_2.
\end{multline*}
Since $\pi$ is square-integrable, there exists a function $f$ in the Harish-Chandra Schwartz space of $G$ such that
\[
 [\pi(g)W_2,W_2']_\pi = \int_Z f(zg) \omega_{\pi}(z)^{-1} \, dz
\]
(see \cite[Th\'eor\`eme VIII.4.2]{MR1989693}).
By definition we have $\abs{f(g)} \ll_d \Xi^G(g)\sigma(g)^{-d}$ for any $d >0$.
Here $\Xi^G$ is the Harish-Chandra standard spherical function on $G$ and $\sigma(g)=\max(1,\log\abs{g_{i,j}},\log\abs{(g^{-1})_{i,j}})$
(cf.~\cite[p.~250 and 242]{MR1989693}). Hence we can bound the above integral by
\begin{multline*}
\int_{K_{n-1}}\int_{K_n}\int_T\int_T\int_Z\int_N\abs{W_1(t_1k_1)W_1'(t_2k_2)} \\
\times \Xi^G(uzt_1^{-1}t_2)\sigma(uzt_1^{-1}t_2)^{-d}\modulus_{B_{n-1}}(t_2)^{-1}\, du \, dz \, dt_1\, dt_2\, dk_1\, dk_2.
\end{multline*}
By \cite[Proposition II.4.5]{MR1989693}, for any $d' >0$, there exists $d>0$ such that
\[
\int_N\Xi^G(uzt)\sigma(uzt)^{-d}\, du \ll \modulus_{B_n}(t)^{\frac12}\sigma(zt)^{-d'}
\]
for all $z \in Z$ and $t \in T$. Thus, the above is bounded by a constant multiple of
\begin{multline*}
\int_{K_{n-1}}\int_T\abs{W_1'(t_2k_2)}
\modulus_{B_{n-1}}(t_2)^{-1}\modulus_{B_n}(t_2)^{\frac12}\\
\times\int_Z\int_{K_n}\int_T\abs{W_1(t_1k_1)}
\modulus_{B_n}(t_1)^{-\frac12}\sigma(zt_1^{-1}t_2)^{-d'}\, dt_1\, dk_1\, dz \, dt_2\, dk_2.
\end{multline*}
By the Cauchy--Schwarz inequality, the integral over $t_1,k_1$ is bounded by the square-root of
\begin{multline*}
\int_{K_n}\int_T\abs{W_1(t_1k_1)}^2\modulus_{B_n}(t_1)^{-1}\, dt_1\, dk_1\times
\int_{K_n}\int_T\sigma(zt_1^{-1}t_2)^{-2d'}\, dt_1\, dk_1\\=
\int_{ZN\bs G}\abs{W_1(g)}^2\, dg\times\int_T\sigma(zt_1^{-1})^{-2d'}\, dt_1.
\end{multline*}
Since
\[
\int_Z\left(\int_T\sigma(zt_1^{-1})^{-2d'}\, dt_1\right)^{\frac12}\, dz<\infty
\]
for $d'\gg1$ and $\modulus_{B_n}\rest_{B_{n-1}}=\modulus_{B_{n-1}}\cdot\abs{\det}$, we reduce the convergence to that of
\[
\int_{K_{n-1}}\int_T\abs{W_1'(t_2k_2)}\modulus_{B_n}(t_2)^{-\frac12}\abs{\det t_2}\, dt_2\, dk_2.
\]
This follows immediately from standard bounds on the Whittaker function (e.g., \cite[Lemma 2.1]{LMao4}).
\end{proof}

\section{Metaplectic groups}
\label{s: mp}

We follow the conventions of \cite{1401.0198, 1404.2905}.
Let $n$ be a positive integer and let $$G' = \Sp_n = \{g \in \GL_{2n} : {}^tg J_n g = J_n \}$$ be the symplectic group of rank $n$, where $J_n = \sm{}{w_n}{-w_n}{}$.
Let $N'$ be the maximal unipotent subgroup of $G'$ consisting of upper unitriangular matrices.
Let $\tilde{G} = \Mp_n$ be the metaplectic group of rank $n$, i.e., the unique non-split double cover of $G'$.
We regard $\tilde{G}$ as the set $G' \times \{ \pm 1 \}$ with multiplication law determined by Ranga Rao's $2$-cocycle.
We write $\tilde{g} = (g,1) \in \tilde{G}$ for $g \in G'$.
We identify $N'$ with its image in $\tilde{G}$ under the homomorphism $u \mapsto \tilde{u}$.
Let $\tilde N$ be the preimage of $N'$ in $\tilde G$.

Let $G = \Sp_{2n}$ be the symplectic group of rank $2n$ and $\eta:G' \hookrightarrow G$ the embedding given by $\eta(g) = \diag(I_n,g,I_n)$.
Let $N$ be the maximal unipotent subgroup of $G$ consisting of upper unitriangular matrices.
Let $V$ be the unipotent radical of the standard parabolic subgroup of $G$ with Levi component $\GL_1 \times \cdots \times \GL_1 \times \Sp_n$,
so that $N = V \rtimes \eta(N')$.
Let $P = M\ltimes U$ be the Siegel parabolic subgroup of $G$ with Levi component
$M = \{ \varrho(m) : m \in \GL_{2n} \}$ and unipotent radical $U = \{ \ell(x) : x \in \mathfrak{s}_{2n} \}$,
where $\varrho(m) = \diag(m, m^*)$ and $\ell(x) = \sm{I_{2n}}{x}{}{I_{2n}}$.
Let $K = G \cap \GL_{4n}(\mathcal{O})$ be the standard maximal compact subgroup of $G$.

Let $\pi\in\Irr_{\gen}\GL_{2n}$.
We regard $\pi$ as a representation of $M$ via $\varrho$ and realize it on the Whittaker model $\WhitM(\pi)$ with respect to the
non-degenerate character $\psi_{N_M}$ of $N_M = N \cap M$ given by $\psi_{N_M}(u)=\psi(u_{1,2}+ \cdots+u_{2n-1,2n})$.
Let $\Ind(\WhitM(\pi))$ be the space of left $U$-invariant smooth functions $W$ on $G$ such that the function
$m\mapsto\modulus_P(m)^{-\frac12} W(m g)$ on $M$ belongs to $\WhitM(\pi)$ for all $g\in G$.
For $W \in \Ind(\WhitM(\pi))$ and $s \in \C$, we define a function $W_s$ on $G$ by $W_s(g) = W(g) \abs{\det m}^s$,
where we choose $u \in U$, $m \in \GL_{2n}$, $k \in K$ such that $g = u \varrho(m) k$.
Let $\Ind(\WhitM(\pi), s)$ be the representation of $G$ on $\Ind(\WhitM(\pi))$ given by $(\Ind(\WhitM(\pi), s)(g)W)_s = W_s(\,\cdot \, g)$.
We define an intertwining operator
\[
M(s): \Ind(\WhitM(\pi), s) \longrightarrow \Ind(\WhitM(\pi^\vee), -s)
\]
by the meromorphic continuation of the integral
\[
 (M(s)W)_{-s}(g) = \int_U W_s \left( \varrho(\mathfrak{t})
 \sm{}{I_{2n}}{-I_{2n}}{} u g \right) d_\psi u,
\]
where $\mathfrak{t} = \diag(1,-1,1,\ldots,-1) \in \GL_{2n}$.
Recall that $M(s)$ converges absolutely and is holomorphic for $\Re s>0$ if $\pi$ is tempered.

Let $\omega_{\psi^{-1}}$ be the (suitably extended) Weil representation of $V \rtimes \tilde{G}$ with respect to $\psi^{-1}$ on the space
$\mathcal{S}(F^n)$ of Schwartz--Bruhat functions on $F^n$ (see \cite[\S 2.4]{1401.0198}).
Let
\[
\whitform(W,\Phi,\tilde g,s) =\int_{V_\gamma\bs V} W_s(\gamma v\eta(g))\omega_{\psi^{-1}}(v \tilde g)\Phi(\xi_n)\,d_\psi v
\]
for $W \in \Ind(\WhitM(\pi))$, $\Phi \in \mathcal{S}(F^n)$, $g \in G'$, and $s \in \C$, where
\[
 \gamma = \left(\begin{smallmatrix} & I_n & & \\ & & & I_n \\ -I_n & & & \\ & & I_n & \end{smallmatrix}\right)=
 \begin{pmatrix}&I_{2n}\\-I_{2n}&\end{pmatrix}
 \left(\begin{smallmatrix}I_n&&&\\&&-I_n&\\&I_n&&\\&&&I_n\end{smallmatrix}\right),
\]
$V_{\gamma} = V \cap \gamma^{-1} N \gamma$, and $\xi_n = (0, \ldots, 0, 1) \in F^n$.
By \cite[Lemma 4.5]{1401.0198}, this integral converges absolutely and defines an entire function in $s$, and
$\whitform(W,\Phi, \tilde u \tilde g,s) = \psi_{\tilde N}(\tilde u) \whitform(W,\Phi, \tilde{g},s)$ for $\tilde u \in \tilde N$,
where $\psi_{\tilde N}$ is the genuine character of $\tilde N$ whose restriction to $N'$ is the non-degenerate character
$$\psi_{\tilde N}(u) = \psi(u_{1,2} + \cdots + u_{n-1,n} - \tfrac{1}{2}u_{n,n+1}).$$

Consider now the local zeta integrals and local factors of Ginzburg--Rallis--Soudry \cite{MR1675971, MR1671452}.
Let $\tilde\sigma\in\Irr_{\genpsi{\psi_{\tilde N}^{-1}}}\tilde G$ with Whittaker model $\Whit^{\psi^{-1}}(\tilde{\sigma})$ and let $\pi \in \Irr_{\gen} \GL_{2n}$.
For any $\tilde{W} \in \Whit^{\psi^{-1}}(\tilde{\sigma})$, $W' \in \Ind(\Whit^{\psi}(\pi))$, $\Phi \in \mathcal{S}(F^n)$, and $s \in \C$ let
\[
\tilde{J}(\tilde W,W',\Phi,s)=\int_{N' \bs G'}\tilde W(\tilde{g})\whitform(W',\Phi,\tilde g,s)\, d_\psi g.
\]
The integral converges absolutely for $\Re s\gg0$ and is a rational function in $q^{-s}$.
Note that if $\tilde\sigma$ is square-integrable and $\pi$ is tempered then by \cite[Lemma 4.12]{1401.0198}
this integral converges absolutely for $\Re s \ge -\frac12$.
By \cite[Proposition 6.6]{MR1675971}, there exist $\tilde W$, $W'$, $\Phi$ such that $\tilde{J}(\tilde W,W',\Phi,s) = 1$ for all $s \in \C$.
Moreover, we have a local functional equation
\begin{equation} \label{fe}
 \tilde{J}(\tilde W,M(s)W',\Phi,-s)
 = \omega_\pi((-1)^n 2) \abs{2}^{2ns}\frac{\gamma^{\an}(s+\tfrac12, \tilde{\sigma} \times \pi, \psi)}
 {\gamma^{\an}(s, \pi, \psi) \gamma^{\an}(2s, \pi, \wedge^2, \psi)}
 \cdot \tilde{J}(\tilde W,W',\Phi,s)
\end{equation}
which defines the $\gamma$-factor $\gamma^{\an}(s, \tilde\sigma\times \pi, \psi)$ (see \cite{Kaplan}\footnote{In the definition of
$\Gamma(s, \pi \times \tau, \psi)$ in \cite[\S 4]{Kaplan}, $\omega_\pi(-1)$ in the metaplectic case should read the central sign
$\omega_\pi(-I_{2l}, 1) / \gamma_\psi((-1)^l)$ introduced by Gan--Savin \cite{MR2999299}.}).
Here $\omega_\pi$ is the central character of $\pi$ and the second factor in the denominator is the $\gamma$-factor defined by Shahidi \cite{MR1070599}.

Consider the subset $\Irr_{\msqr}\GL_{2n}$ of $\Irr_{\gen}\GL_{2n}$ consisting of representations of the form
\begin{equation} \label{eq: pi}
\pi=\pi_1\times\dots\times\pi_k := \Ind(\pi_1 \otimes \cdots \otimes \pi_k),
\end{equation}
where $\pi_1,\dots,\pi_k$ are pairwise inequivalent irreducible square-integrable representations of
$\GL_{2n_1},\dots,\GL_{2n_k}$ respectively with $n_1+\dots+n_k=n$ and $L(0,\pi_i,\wedge^2)=\infty$ for all $i$.
It follows from \cite[Theorem 4.3]{MR2846403}, \cite{MR3008415} that any $\pi \in \Irr_{\msqr}\GL_{2n}$ has a trivial central character.
Also, for $\pi$ as above $\gamma^{\an}(s, \pi, \wedge^2, \psi)$ has a pole of order $k$ at $s=1$, since the $\pi_i$'s are distinct.
Therefore,
\begin{equation} \label{eq: resk}
 \lim_{s \rightarrow \frac12} \frac{\gamma^{\an}(s + \frac12, \pi, \wedge^2, \psi)}{\gamma^{\an}(2s, \pi, \wedge^2, \psi)} = 2^k.
\end{equation}

For $\pi \in \Irr_{\msqr}\GL_{2n}$, we define a representation $\des_{\psi}(\pi)$ of $\tilde G$ by right translation on the space spanned
by the genuine functions $A^{\psi}(M(\frac12)W, \Phi, \cdot, -\frac12)$ on $\tilde{G}$ for $W \in \Ind(\WhitM(\pi))$ and $\Phi \in \mathcal{S}(F^n)$.
By \cite[p.~860, Theorem]{MR1671452}, we have $\des_{\psi}(\pi) \ne 0$.

The following result will be proved in the next section.
\begin{theorem} \label{thm: bijection}
The map $\pi\mapsto\des_\psi(\pi)$ defines a bijection
\[
\des_\psi:\Irr_{\msqr}\GL_{2n}\longrightarrow\Irr_{\sqr,\genpsi{\psi_{\tilde N}}}\Mp_n.
\]
Moreover, if $\pi\in\Irr_{\msqr} \GL_{2n}$ and $\tilde\pi=\des_{\psi^{-1}}(\pi)$ then
\begin{equation} \label{eq: polek}
\gamma^{\an}(s, \tilde{\pi} \times \tau, \psi) = \gamma^{\an}(s, \pi\times \tau, \psi)
\end{equation}
for $\tau \in \Irr_{\gen} \GL_m$, $m \ge 1$.
In particular, if $\pi$ is of the form \eqref{eq: pi} then $\gamma^{\an}(s, \tilde{\pi} \times \pi, \psi)$ has a pole of order $k$ at $s=1$.
\end{theorem}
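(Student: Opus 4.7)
The plan is to combine the descent construction of Ginzburg--Rallis--Soudry with the Gan--Savin theta correspondence and the Jiang--Soudry parametrization of $\Irr_{\sqr,\gen}\SO(2n+1)$, using \eqref{eq: polek} as the central local input that glues everything together.

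First, I would verify that $\des_\psi(\pi)\in\Irr_{\sqr,\genpsi{\psi_{\tilde N}}}\Mp_n$. The $\psi_{\tilde N}$-equivariance of the generating functions $\whitform(M(\tfrac12)W,\Phi,\cdot,-\tfrac12)$ is built into the very definition via the identity $\whitform(W,\Phi,\tilde u\tilde g,s)=\psi_{\tilde N}(\tilde u)\whitform(W,\Phi,\tilde g,s)$ for $\tilde u\in\tilde N$, so $\des_\psi(\pi)$ is automatically $\psi_{\tilde N}$-generic once nonzero, which is the cited GRS non-vanishing. Irreducibility and square-integrability require more: one computes Jacquet modules along standard parabolics and applies Casselman's criterion, with the relevant exponents determined by the form of $\pi$ in \eqref{eq: pi} together with the intertwining operator $M(\tfrac12)$.

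The next, and most substantive, step is the gamma factor identity \eqref{eq: polek}. I would prove it by substituting $\tilde W=\whitform(M(\tfrac12)W,\Phi,\cdot,-\tfrac12)$ into the zeta integral $\tilde J(\tilde W,W',\Phi',s)$ that defines $\gamma^{\an}(s,\tilde\pi\times\tau,\psi)$, and unfolding against the doubling-type Rankin--Selberg integral representing $\gamma^{\an}(s,\pi\times\tau,\psi)$. Comparing the two sides using the functional equation \eqref{fe} and the residue computation \eqref{eq: resk} yields the desired equality, once the $\Sym^2$ and $\wedge^2$ contributions introduced by the intertwining operator are matched and cancelled. The pole statement for $\gamma^{\an}(s,\tilde\pi\times\pi,\psi)$ at $s=1$ is then immediate from the analogous fact for $\gamma^{\an}(s,\pi\times\pi,\psi)=\prod_{i,j}\gamma^{\an}(s,\pi_i\times\pi_j,\psi)$, since the diagonal factors contribute $k$ poles at $s=1$ coming from the $L(s,\pi_i,\wedge^2)$-type poles, and the off-diagonal factors are holomorphic there by the inequivalence of the $\pi_i$.

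With \eqref{eq: polek} in hand, injectivity of $\des_\psi$ is immediate from the local converse theorem for $\GL_{2n}$ applied to twists by $\tau\in\Irr_{\gen}\GL_m$, $m\le 2n-1$. Surjectivity is the main obstacle, and I would handle it by composing the Gan--Savin theta bijection between $\Irr_{\sqr,\genpsi{\psi_{\tilde N}}}\Mp_n$ and $\Irr_{\sqr,\gen}\SO(2n+1)$ with the Jiang--Soudry lift $\Irr_{\sqr,\gen}\SO(2n+1)\to\Irr_{\msqr}\GL_{2n}$ to produce a candidate inverse. The delicate point is verifying that this composite bijection agrees with $\des_\psi^{-1}$, which reduces to matching $\psi_{\tilde N}$-Whittaker functionals across the theta correspondence and identifying the transferred $\gamma$-factors with the Rankin--Selberg ones; this compatibility is what Gan--Savin's refinement of the classical descent provides, and completing that comparison is where I expect the real work to lie.
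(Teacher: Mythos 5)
Your outline correctly identifies the three players (descent, theta, Jiang--Soudry lift) and the shape of the final picture, but the logical architecture is inverted relative to what actually works, and the most labor-intensive step is replaced by a sketch that would not go through.

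The central problem is your proposed direct proof of \eqref{eq: polek} ``by substituting $\tilde W=\whitform(M(\tfrac12)W,\Phi,\cdot,-\tfrac12)$ into the zeta integral $\tilde J$ and unfolding.'' There is no such local unfolding argument: the descent functions live on $\Mp_n$, while the target $\gamma$-factor is for $\GL_{2n}\times\GL_m$, and a purely local manipulation of the Ginzburg--Rallis--Soudry integral that trades one for the other is not available. In the paper, \eqref{eq: polek} is obtained as a \emph{corollary}, not an input: one first establishes the commutative triangle $\Thetaone_\psi(\sigma)=\des_{\psi^{-1}}(\JSlift(\sigma))$ for $\sigma\in\Irr_{\sqr,\gen}\SO(2n+1)$ (Proposition \ref{prop: triangle}), whose proof is genuinely global (globalization via Corollary \ref{lem: globalize2}, global descent irreducibility from \cite{MR1954940, MR3009748}, the Jiang--Soudry global lift \cite[Theorem E]{MR2058617}, and a local-global comparison of the descent from \cite[Theorem 6.2]{1401.0198}). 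Then \eqref{eq: polek} drops out by composing the Jiang--Soudry characterization \eqref{eq: gamma} with the theta $\gamma$-compatibility (Proposition \ref{prop: gamma2}). Your proposal uses \eqref{eq: polek} to bootstrap injectivity and then treats the triangle (which you correctly flag as ``the real work'' for surjectivity) as a separate problem; but in the actual argument the triangle comes first and \eqref{eq: polek} is downstream of it, so the plan as written either begs the question or leaves the key step unproved.

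Two secondary gaps. First, you propose to get irreducibility and square-integrability of $\des_\psi(\pi)$ via Jacquet module computations and Casselman's criterion applied to the explicit construction. This would be very hard: the descent is defined as a space of functions obtained from an intertwining operator at $s=\tfrac12$, and controlling its Jacquet modules directly is not feasible at this level of generality. The paper obtains both properties for free from the identification $\des_{\psi^{-1}}(\pi)=\Thetaone_\psi(\sigma)$ together with the Gan--Savin theta results (Propositions \ref{prop: theta}, \ref{prop: thetatemp}). Second, the theta bijection $\Thetaone_\psi:\Irr_{\sqr,\gen}\SO(2n+1)\to\Irr_{\sqr,\genpsi{\psi_{\tilde N}^{-1}}}\Mp_n$ and its $\gamma$-compatibility are not off-the-shelf: the paper devotes Propositions \ref{prop: theta}--\ref{prop: gamma2} to establishing genericity preservation (via twisted Jacquet modules of the Weil representation and a nontrivial argument about tempered representations with Bessel models, Proposition \ref{prop: supp}) and to correcting a gap in \cite[Corollary 11.3]{MR2999299}. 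Your sketch treats this as already available, which understates the content. The pole-order computation in your last paragraph is correct.
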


We now quote a corollary of the main result of \cite{1404.2905}.
\begin{theorem}[{\cite[Corollary 3.4]{1404.2905}}] \label{thm: main}
Let $\pi\in\Irr_{\msqr}\GL_{2n}$ and $\tilde\pi=\des_{\psi^{-1}}(\pi) \in \Irr_{\sqr,\genpsi{\psi_{\tilde N}^{-1}}}\Mp_n$. Then
\begin{equation}\label{eq: main}
\int_{N'}\tilde{J}(\tilde\pi(u) \tilde W, W',\Phi,\tfrac12)\psi_{\tilde N}(u)\,d_\psi u
=\eps^{\an}(\tfrac12,\pi,\psi) \tilde W(e) \whitform(M(\tfrac 12)W',\Phi,e, -\tfrac12)
\end{equation}
for $\tilde{W} \in \Whit^{\psi^{-1}}(\tilde{\pi})$, $W' \in \Ind(\Whit^{\psi}(\pi))$, and $\Phi \in \mathcal{S}(F^n)$.
\end{theorem}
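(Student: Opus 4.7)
The identity \eqref{eq: main} expresses a Fourier coefficient of the local zeta integral $\tilde J$ in terms of Whittaker values attached to the descent, so my plan is to combine the local functional equation \eqref{fe} with the defining property of $\tilde\pi = \des_{\psi^{-1}}(\pi)$ and a Fourier-inversion computation along $N'$. Concretely, I would first interpret the LHS as the value at $s = \tfrac12$ of the meromorphic family
\[
I(s) := \int_{N'} \tilde J(\tilde\pi(u)\tilde W, W', \Phi, s)\,\psi_{\tilde N}(u)\,d_\psi u,
\]
defined initially in a region where $\tilde J$ converges absolutely and the outer $N'$-integration can be controlled by the Iwasawa decomposition together with the square-integrability of $\tilde\pi$ (mirroring the convergence bookkeeping used in the proof of Theorem \ref{thm: GLn}), and then continued to $s = \tfrac12$.

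The first step is to apply \eqref{fe} inside $I(s)$, yielding
\[
I(s) = \frac{\omega_\pi((-1)^n 2)\,\abs{2}^{2ns}\,\gamma^{\an}(s+\tfrac12,\tilde\pi\times\pi,\psi)}{\gamma^{\an}(s,\pi,\psi)\,\gamma^{\an}(2s,\pi,\wedge^2,\psi)}\int_{N'}\tilde J(\tilde\pi(u)\tilde W, M(s)W', \Phi, -s)\,\psi_{\tilde N}(u)\,d_\psi u.
\]
At $s = \tfrac12$ the prefactor is a priori singular, since $\gamma^{\an}(1,\pi,\wedge^2,\psi)$ has a pole of order $k$; but by \eqref{eq: polek} so does $\gamma^{\an}(1,\tilde\pi\times\pi,\psi)$, and together with \eqref{eq: resk} and the triviality of $\omega_\pi$ on $\Irr_{\msqr}\GL_{2n}$, the ratio collapses to a clean constant times $\eps^{\an}(\tfrac12,\pi,\psi)$. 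Tracking the $2^k$ and $\abs{2}^n$ factors here is tedious but formal.

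The second step is to identify the right-hand integral at $s = -\tfrac12$. By the very definition of the descent, the function $\whitform(M(\tfrac12)W',\Phi,\cdot,-\tfrac12)$ lies in the space of $\des_\psi(\pi)$, so $\tilde J(\tilde W, M(\tfrac12)W', \Phi, -\tfrac12)$ is (formally) the pairing against $\tilde W$ of a descent vector on $\Mp_n$. After pre-composition with $\tilde\pi(u)$ and averaging against $\psi_{\tilde N}$, the left $\tilde N$-equivariance of $\tilde W$ combined with the uniqueness of the Whittaker model on $\Mp_n$ and the bijectivity of the descent (Theorem \ref{thm: bijection}) forces the iterated integral to equal $\tilde W(e)\cdot\whitform(M(\tfrac12)W',\Phi,e,-\tfrac12)$.

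The main obstacle is the rigorous justification of convergence and of the passage to the limit $s \to \tfrac12$: both $M(s)$ and $\gamma^{\an}(2s,\pi,\wedge^2,\psi)^{-1}$ vanish/blow up to order $k$ there, so producing a finite RHS requires a delicate cancellation that relies on the residue calculation implicit in Theorem \ref{thm: bijection}. I would handle this by working throughout in a strip $\Re s > \tfrac12$ where rapid decay of Whittaker functions and square-integrability of $\tilde\pi$ give absolute convergence of the iterated integral, establish the identity with meromorphic coefficients there, and only at the end take the regularized limit $s \to \tfrac12$; this is the step where invoking the full Main Identity machinery of \cite{1404.2905} seems essentially unavoidable.
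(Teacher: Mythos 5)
The statement you are asked to prove is, in the paper, not proved at all: it is quoted verbatim from \cite[Corollary 3.4]{1404.2905}, whose proof rests on a global Rankin--Selberg / descent argument developed at length in that reference. The paper only remarks that this does not create a circularity with Theorem~\ref{thm: bijection}. So there is no internal proof to compare against; one must assess whether your argument is self-contained.

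It is not, and the gap is in your second step. After applying \eqref{fe} you are left with the task of evaluating
\[
\int_{N'} \tilde{J}\bigl(\tilde\pi(u)\tilde W, M(\tfrac12)W',\Phi,-\tfrac12\bigr)\,\psi_{\tilde N}(u)\,d_\psi u
= \int_{N'} (\tilde\pi(u)\tilde W, \tilde W')_{\tilde\pi}\,\psi_{\tilde N}(u)\,d_\psi u ,
\]
where $\tilde W' = \whitform(M(\tfrac12)W',\Phi,\cdot,-\tfrac12)$ and $(\cdot,\cdot)_{\tilde\pi}$ is the $L^2(N'\bs G')$-pairing. You assert that uniqueness of the Whittaker model ``forces'' this to equal $\tilde W(e)\tilde W'(e)$. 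Uniqueness only tells you that the left-hand side is a bilinear form which, in each variable separately, is an $(\tilde N,\psi_{\tilde N}^{\mp1})$-equivariant functional, hence that it is a \emph{scalar} times $\tilde W(e)\tilde W'(e)$. It gives no information about that scalar. Pinning the scalar down is the whole point: by the very computation in the proof of Theorem~\ref{thm: Mpn}, that scalar equals $d_\psi^{\Sp_n}/d_{\tilde\pi}$ (up to the explicit constants), and so your second step is equivalent in content to the conclusion you are trying to reach. In fact one can check that the constant in your step 2 is not $1$: combining \eqref{eq: mainnew} with \eqref{eq: prop_Mp} shows it is $\abs{2}^n 2^k \gamma^{\an}(1,\pi,\Sym^2,\psi)$, and feeding this into your step 1 prefactor indeed returns the desired $\eps^{\an}(\tfrac12,\pi,\psi)$ --- but this verification presupposes \eqref{eq: mainnew}, i.e.\ the statement itself.

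Your instinct that the identity should follow by reversing the logic of Theorem~\ref{thm: Mpn} is not wrong in spirit: the paper does exactly this, but only over $F=\R$ (Appendix~\ref{app: real}), where the formal degree of $\tilde\pi$ is known independently via Harish-Chandra's formula, so the constant $c_\pi$ (in the paper's notation) can be evaluated without appealing to \cite{1404.2905}. Over a $p$-adic field no such independent evaluation of the formal degree is available --- indeed producing one is the purpose of this paper --- so the argument cannot be closed this way, and your closing admission that ``invoking the full Main Identity machinery of \cite{1404.2905} seems essentially unavoidable'' is accurate: without that input, the proposal proves only that the two sides of \eqref{eq: main} are proportional, not that they are equal.
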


Note that \cite[Corollary 3.4]{1404.2905} uses Theorem \ref{thm: bijection}. However, this does not entail a circular reasoning
since the proof of Theorem \ref{thm: bijection} in the next section will be independent of the results of \cite{1404.2905}.

For any $\pi\in\Irr_{\gen}\GL_m$ let $\gamma^{\an}(s,\pi,\Sym^2,\psi)$ be the $\gamma$-factor defined by Shahidi \cite{MR1070599}.
We have
\begin{equation}
\label{eq: sym2wedge2}
\gamma^{\an}(s,\pi\times\pi,\psi) = \gamma^{\an}(s,\pi,\Sym^2,\psi)\gamma^{\an}(s,\pi,\wedge^2,\psi).
\end{equation}
With Theorem \ref{thm: main}, we will be able to deduce a formula for the formal degree as follows:
\begin{theorem} \label{thm: Mpn}
Assume that $\pi\in\Irr_{\msqr}\GL_{2n}$ is of the form \eqref{eq: pi}.
Let $\tilde \pi = \des_{\psi^{-1}}(\pi)\in \Irr_{\sqr,\genpsi{\psi_{\tilde N}^{-1}}}\Mp_n$ and let $d_{\tilde \pi}$ be its formal degree
(as a Haar measure on $\Sp_n$). Then
\[
d_\psi^{\Sp_n}=\abs{2}^n2^k\gamma^{\an}(1,\pi,\Sym^2,\psi)d_{\tilde\pi}=\abs{2}^n2^k\gamma^{\ari}(1,\pi,\Sym^2,\psi)d_{\tilde\pi}.
\]
\end{theorem}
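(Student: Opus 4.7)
The plan is to transplant the proof of Theorem \ref{thm: GLn}, replacing \eqref{FW} by the Main Identity (Theorem \ref{thm: main}) and \eqref{prop} by the local functional equation \eqref{fe} at $s = \tfrac12$. Since every $\pi \in \Irr_{\msqr}\GL_{2n}$ is self-dual with trivial central character, I realize the contragredient $\tilde\pi^{\vee}$ as $\des_\psi(\pi)$, so that its vectors take the form $\tilde W' = \whitform(M(\tfrac12)W', \Phi, \cdot, -\tfrac12)$ for $W' \in \Ind(\WhitM(\pi))$ and $\Phi \in \mathcal{S}(F^n)$. On $\Whit^{\psi^{-1}}(\tilde\pi) \times \Whit^{\psi}(\tilde\pi^{\vee})$ I introduce two $\tilde G$-invariant bilinear pairings,
\[
(\tilde W, \tilde W')_{\tilde\pi} = \int_{N' \bs G'} \tilde W(\tilde g)\tilde W'(\tilde g) \, d_\psi g, \qquad [\tilde W, \tilde W']_{\tilde\pi} = \frac{\tilde J(\tilde W, W', \Phi, \tfrac12)}{\eps^{\an}(\tfrac12, \pi, \psi)},
\]
the first convergent by square-integrability \cite{MR2495561} and the second well-defined on $\tilde\pi^{\vee}$ (independence from the choice of $(W', \Phi)$ representing $\tilde W'$ comes from the uniqueness of the Whittaker functional). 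With this normalization, Theorem \ref{thm: main} becomes the exact analog of \eqref{FW}:
\[
\int_{N'} [\tilde\pi(u)\tilde W, \tilde W']_{\tilde\pi} \psi_{\tilde N}(u) \, d_\psi u = \tilde W(e) \tilde W'(e).
\]

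Next I compare the two pairings via the functional equation. Unfolding, $(\tilde W, \tilde W')_{\tilde\pi} = \tilde J(\tilde W, M(\tfrac12)W', \Phi, -\tfrac12)$, so taking $s \to \tfrac12$ in \eqref{fe} gives
\[
(\tilde W, \tilde W')_{\tilde\pi} = \omega_\pi((-1)^n 2) \abs{2}^n \, \eps^{\an}(\tfrac12, \pi, \psi) \lim_{s \to \frac12} \frac{\gamma^{\an}(s+\tfrac12, \tilde\pi \times \pi, \psi)}{\gamma^{\an}(s, \pi, \psi) \gamma^{\an}(2s, \pi, \wedge^2, \psi)} [\tilde W, \tilde W']_{\tilde\pi}.
\]
By Theorem \ref{thm: bijection} the numerator equals $\gamma^{\an}(s+\tfrac12, \pi \times \pi, \psi)$, which via \eqref{eq: sym2wedge2} factors as $\gamma^{\an}(s+\tfrac12, \pi, \Sym^2, \psi) \gamma^{\an}(s+\tfrac12, \pi, \wedge^2, \psi)$; a Laurent-expansion argument in the spirit of \eqref{eq: resk} shows $\gamma^{\an}(s+\tfrac12, \pi, \wedge^2, \psi)/\gamma^{\an}(2s, \pi, \wedge^2, \psi) \to 2^k$ as $s \to \tfrac12$. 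Combined with $\omega_\pi = 1$ and the self-duality identity $\eps^{\an}(\tfrac12, \pi, \psi) = \gamma^{\an}(\tfrac12, \pi, \psi)$, the whole coefficient collapses to $\abs{2}^n 2^k \gamma^{\an}(1, \pi, \Sym^2, \psi)$.

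The formal-degree computation then proceeds in literal parallel with the proof of Theorem \ref{thm: GLn}: expand $[\tilde\pi(g) \tilde W_1, \tilde W_1']_{\tilde\pi}$ as a $\tilde J$-integral over $N' \bs G'$, interchange orders of integration, translate $g \mapsto g_1^{-1} g$, decompose $G'$ as $N' \cdot (N' \bs G')$, and apply the Bessel identity displayed above. The outcome is $(\tilde W_1, \tilde W_2')_{\tilde\pi} \cdot [\tilde W_2, \tilde W_1']_{\tilde\pi}$, and comparing with the defining relation of $d_{\tilde\pi}$ against the pairing $[\cdot, \cdot]_{\tilde\pi}$ yields $d_\psi^{\Sp_n}/d_{\tilde\pi} = \abs{2}^n 2^k \gamma^{\an}(1, \pi, \Sym^2, \psi)$; the equality with $\gamma^{\ari}$ follows from the known identification of analytic and arithmetic symmetric-square $\gamma$-factors, via \eqref{eq: sym2wedge2}, the Rankin--Selberg case, and Henniart's theorem on the exterior square. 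The main technical obstacle will be justifying the interchange of integrals in this unfolding step: as in Theorem \ref{thm: GLn} this needs Harish-Chandra Schwartz-type bounds on matrix coefficients of $\tilde\pi$, combined with the absolute convergence of $\tilde J$ at $s = \tfrac12$ for tempered $\pi$ guaranteed by \cite[Lemma 4.12]{1401.0198}. A secondary subtlety is the limit interpretation of the functional equation at $s = \tfrac12$, where numerator and denominator on the right-hand side of \eqref{fe} carry matching poles of order $k$ that must be handled simultaneously.
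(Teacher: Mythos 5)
Your proposal is correct and follows essentially the same path as the paper's proof; the only difference is that you absorb the factor $\eps^{\an}(\tfrac12,\pi,\psi)$ into the definition of $[\cdot,\cdot]_{\tilde\pi}$ so that the renormalized Main Identity takes the exact form of \eqref{FW}, whereas the paper keeps $[\tilde W,\tilde W']_{\tilde\pi}=\tilde J(\tilde W,W',\Phi,\tfrac12)$ and lets the $\eps$-factor cancel at the final step. All the substantive ingredients — the functional equation \eqref{fe}, Theorem \ref{thm: main}, the pole cancellation and the identity $\eps^{\an}(\tfrac12,\pi,\psi)=\gamma^{\an}(\tfrac12,\pi,\psi)$ for the self-dual $\pi$ of trivial central character, the unfolding/interchange, the Harish-Chandra Schwartz estimates, and Henniart's comparison together with the positivity of $\gamma^{\ari}(1,\pi,\Sym^2,\psi)$ — coincide with the paper's argument.
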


\begin{remark}
The formal degree conjecture for $\Mp_n$ was formulated in \cite[\S 14]{MR3166215} but the factor $\abs{2}^n$ was overlooked.
\end{remark}

\begin{proof}
We first remark that the first asserted equality implies the second one. Indeed, by a result of Henniart \cite{MR2595008} we have
\[
 \gamma^{\an}(s, \pi, \Sym^2, \psi) = \alpha \gamma^{\ari}(s, \pi, \Sym^2, \psi)
\]
for some root of unity $\alpha$ (a priori depending on $\pi$). On the other hand
$\gamma^{\ari}(1, \pi, \Sym^2, \psi)$ is a positive real number (see \cite[(36) and \S 8.6]{MR2730575}).
Therefore $\alpha>0$ and hence $\alpha=1$.

Fix $\pi\in\Irr_{\msqr}\GL_{2n}$ of the form \eqref{eq: pi} and let $\tilde \pi = \des_{\psi^{-1}}(\pi)$.
By Theorem \ref{thm: bijection} $\tilde\pi$ is square-integrable.
We define a non-degenerate $\tilde G$-invariant bilinear form $[\cdot, \cdot]_{\tilde\pi}$ on
$\Whit^{\psi^{-1}}(\tilde \pi) \times \Whit^{\psi}(\tilde \pi^{\vee})$ as follows.
By the functional equation \eqref{fe}, $\tilde{J}(\tilde W,W',\Phi,\tfrac12)$ is a non-zero constant multiple of the
absolutely convergent integral $\tilde{J}(\tilde W,M(\tfrac12)W',\Phi,-\tfrac12)$.
Indeed, both $\gamma^{\an}(s+\tfrac12, \tilde{\pi} \times \pi, \psi)$ and $\gamma^{\an}(2s, \pi, \wedge^2, \psi)$ have a pole of order $k$ at $s=\frac12$,
whereas $\gamma^{\an}(s, \pi, \psi)$ is holomorphic and non-zero at $s=\frac12$.
Hence, the functional $W'\otimes \Phi \mapsto \tilde{J}(\tilde W,W',\Phi,\tfrac12)$ on $\Ind(\Whit^{\psi}(\pi)) \otimes \mathcal{S}(F^n)$
factors through the map $W' \otimes \Phi \mapsto \whitform(M(\tfrac 12)W',\Phi,\cdot, -\tfrac12)$.
In particular, $\tilde \pi^\vee = \des_{\psi}(\pi)$.
Let
\[
[\tilde W, \tilde W']_{\tilde\pi}=\tilde{J}(\tilde W,W',\Phi,\tfrac12)
\]
for $\tilde{W} \in \Whit^{\psi^{-1}}(\tilde{\pi})$ and $\tilde{W}' = \whitform(M(\tfrac 12)W',\Phi,\cdot, -\tfrac12) \in \Whit^{\psi}(\tilde{\pi}^{\vee})$.
Thus \eqref{eq: main} becomes
\begin{equation} \label{eq: mainnew}
\int_{N'}[\tilde\pi(u) \tilde W, \tilde W']_{\tilde\pi}\psi_{\tilde N}(u)\,d_\psi u=\eps^{\an}(\tfrac12,\pi,\psi) \tilde W(e) \tilde{W}'(e).
\end{equation}

We define another non-degenerate $\tilde G$-invariant bilinear form $(\cdot, \cdot)_{\tilde\pi}$ on
$\Whit^{\psi^{-1}}(\tilde \pi) \times \Whit^{\psi}(\tilde \pi^{\vee})$ by
\[
(\tilde W,\tilde W')_{\tilde\pi}=\int_{N'\bs G'}\tilde{W}(\tilde{g})\tilde{W}'(\tilde{g})\, d_\psi g,
\]
where the integral converges absolutely (see \cite{MR2495561}).
Then by the definition of $(\cdot,\cdot)_{\tilde{\pi}}$, the functional equation \eqref{fe}, and \eqref{eq: resk},
we have
\begin{align*}
 & (\tilde W,\whitform(M(\tfrac 12)W',\Phi, \cdot,-\tfrac12))_{\tilde\pi}
  = \tilde{J}(\tilde W,M(\tfrac12)W',\Phi,-\tfrac12) \\
 & = \abs{2}^n\lim_{s\rightarrow\frac12}\frac{\gamma^{\an}(s+\frac12, \tilde \pi \times \pi, \psi)}{\gamma^{\an}(s, \pi, \psi)
 \gamma^{\an}(2s, \pi, \wedge^2, \psi)}
 \cdot \tilde{J}(\tilde W,W',\Phi,\tfrac12) \\
 & = \abs{2}^n2^k \cdot\lim_{s\rightarrow1}\frac{\gamma^{\an}(s, \tilde \pi \times \pi, \psi)}{\eps^{\an}(\frac12, \pi, \psi)
 \gamma^{\an}(s, \pi, \wedge^2, \psi)}
 \cdot [\tilde W,\whitform(M(\tfrac 12)W',\Phi,\cdot, -\tfrac12)]_{\tilde\pi}.
\end{align*}
Taking into account \eqref{eq: polek} and \eqref{eq: sym2wedge2} we get
\begin{equation} \label{eq: prop_Mp}
(\tilde W,\tilde W')_{\tilde\pi}=\abs{2}^n2^k \cdot \frac{\gamma^{\an}(1, \pi, \Sym^2, \psi)}{\eps^{\an}(\frac12, \pi, \psi)} \cdot [\tilde W,\tilde W']_{\tilde\pi}
\end{equation}
for $\tilde{W} \in \Whit^{\psi^{-1}}(\tilde{\pi})$ and $\tilde{W}' \in \Whit^{\psi}(\tilde{\pi}^{\vee})$.

Recall that the Haar measure $d_{\tilde \pi}$ is defined by the relation
\[
\int_{G'}[\tilde\pi(\tilde{g})\tilde W_1,\tilde W_1']_{\tilde\pi}[\tilde\pi(\tilde{g}^{-1})\tilde W_2,\tilde W_2']_{\tilde\pi}\, d_{\tilde\pi} g=
[\tilde W_1,\tilde W_2']_{\tilde\pi}[\tilde W_2,\tilde W_1']_{\tilde\pi}
\]
for $\tilde{W}_1, \tilde{W}_2 \in \Whit^{\psi^{-1}}(\tilde{\pi})$ and $\tilde{W}'_1, \tilde{W}'_2 \in \Whit^{\psi}(\tilde{\pi}^{\vee})$.
Assume that $\tilde W_1'=\whitform(M(\frac 12)W_1',\Phi,\cdot,-\frac12)$ with $W_1' \in \Ind(\Whit^{\psi}(\pi))$ and $\Phi \in \mathcal{S}(F^n)$.
As in the case of $\GL_n$ we compute
\[
\int_{G'}[\tilde\pi(\tilde{g})\tilde W_1,\tilde W_1']_{\tilde\pi}[\tilde\pi(\tilde{g}^{-1})\tilde W_2,\tilde W_2']_{\tilde\pi}\, d_\psi g,
\]
i.e.,
\[
 \int_{G'}\left( \int_{N'\bs G'}\tilde W_1(\tilde{x} \tilde{g})\whitform(W_1',\Phi,\tilde x,\tfrac12)\, d_\psi x \right)
 [\tilde\pi(\tilde{g}^{-1})\tilde W_2,\tilde W_2']_{\tilde\pi}\, d_\psi g.
\]
We will soon see that the double integral converges absolutely.
Therefore we can interchange the order of integration to get
\[
 \int_{N'\bs G'}\int_{G'} \tilde W_1(\tilde{x} \tilde{g})\whitform(W_1',\Phi,\tilde x,\tfrac12)
 [\tilde\pi(\tilde{g}^{-1})\tilde W_2,\tilde W_2']_{\tilde\pi}\, d_\psi g \, d_\psi x.
\]
Changing the variable $g\mapsto x^{-1}g$, we get
\begin{align*}
 &\int_{N'\bs G'}\int_{G'}\tilde W_1(\tilde{g})\whitform(W_1',\Phi,\tilde x,\tfrac12)
 [\tilde\pi(\tilde{g}^{-1}\tilde{x})\tilde W_2,\tilde W_2']_{\tilde\pi}\, d_\psi g\, d_\psi x \\
 &=\int_{N'\bs G'}\int_{N'\bs G'}\int_{N'}\psi_{\tilde N}(u)^{-1}\tilde W_1(\tilde{g})\whitform(W_1',\Phi,\tilde x,\tfrac12)
 [\tilde\pi(\tilde{g}^{-1}u^{-1}\tilde{x})\tilde W_2,\tilde W_2']_{\tilde\pi}\, d_\psi u\, d_\psi g\, d_\psi x \\
 &=\int_{N'\bs G'}\int_{N'\bs G'}\int_{N'}\tilde W_1(\tilde{g})\whitform(W_1',\Phi,\tilde x,\tfrac12)
 [\tilde\pi(u\tilde{x})\tilde W_2,\tilde\pi^{\vee}(\tilde{g})\tilde W_2']_{\tilde\pi}\psi_{\tilde N}(u)\, d_\psi u\, d_\psi g\, d_\psi x.
\end{align*}
By \eqref{eq: mainnew}, this is equal to
\begin{align*}
 & \eps^{\an}(\tfrac12, \pi, \psi) \int_{N'\bs G'}\int_{N'\bs G'}\tilde W_1(\tilde{g})\whitform(W_1',\Phi,\tilde x,\tfrac12)
 \tilde W_2(\tilde{x}) \tilde W_2'(\tilde{g})\, d_\psi g\, d_\psi x \\
 & = \eps^{\an}(\tfrac12, \pi, \psi) (\tilde W_1,\tilde W_2')_{\tilde\pi}[\tilde W_2,\tilde W_1']_{\tilde\pi}.
\end{align*}
Now Theorem \ref{thm: Mpn} follows from \eqref{eq: prop_Mp}.

To justify the manipulation, we show the convergence of
\[
 \int_{N'\bs G'}\int_{N'\bs G'}\int_{N'}\abs{\tilde W_1(\tilde{g})\whitform(W_1',\Phi,\tilde x,\tfrac12)
 [\tilde\pi(\tilde{g}^{-1}u\tilde{x})\tilde W_2,\tilde W_2']_{\tilde\pi}} \,du\, dg\, dx.
\]
Let $T'$ be the maximal torus of $G'$ consisting of diagonal matrices, $B'$ the Borel subgroup of $G'$ consisting of upper triangular matrices, and $K' = G' \cap \GL_{2n}(\mathcal{O})$ the standard maximal compact subgroup of $G'$.
As in the proof of Theorem \ref{thm: GLn}, using the Iwasawa decomposition we can rewrite the above integral as
\begin{multline*}
\int_{K'}\int_{K'}\int_{T'}\int_{T'}\int_{N'}\abs{\tilde W_1(\tilde{t}_1\tilde{k}_1)
\whitform(W_1',\Phi,\tilde t_2\tilde k_2,\tfrac12)[\tilde\pi(\tilde{k}_1^{-1}\tilde t_1^{-1}u\tilde t_2\tilde k_2)\tilde W_2,\tilde W_2']_{\tilde\pi}}
\\ \times \modulus_{B'}(t_1t_2)^{-1}\, du\, dt_1\, dt_2 \, dk_1\, dk_2.
\end{multline*}
Since $\tilde\pi$ is square-integrable, by \cite[Corollaire III.1.2]{MR1989693} (or rather its analogue for the metaplectic group),
for any $d>0$, the above integral is bounded by a constant multiple of
\begin{multline*}
\int_{K'}\int_{K'}\int_{T'}\int_{T'}\int_{N'}\abs{\tilde W_1(\tilde{t}_1\tilde{k}_1)
\whitform(W_1',\Phi,\tilde t_2\tilde k_2,\tfrac12)}\Xi^{G'}(t_1^{-1}ut_2)\sigma'(t_1^{-1}ut_2)^{-d}
\\ \times \modulus_{B'}(t_1t_2)^{-1}\,du\, dt_1\, dt_2 \, dk_1\,dk_2
\end{multline*}
where $\Xi^{G'}$ is the Harish-Chandra standard spherical function on $G'$ and $\sigma'(g)=\max(1,\log\abs{g_{i,j}})$ for $g\in G'$.
By \cite[Proposition II.4.5]{MR1989693} for any $d'>0$ we can choose $d>0$ such that the above is bounded by a constant multiple of
\[
\int_{K'}\int_{T'}\abs{\whitform(W_1',\Phi,\tilde t_2\tilde k_2,\tfrac12)}\modulus_{B'}(t_2)^{-\frac12}
\int_{K'}\int_{T'}\abs{\tilde W_1(\tilde{t}_1\tilde{k}_1)}\sigma'(t_1^{-1}t_2)^{-d'}
\modulus_{B'}(t_1)^{-\frac12}\, dt_1\, dk_1\, dt_2 \,dk_2.
\]
By the Cauchy--Schwarz inequality, the integral over $t_1, k_1$ is bounded by the square-root of
\[
\int_{K'}\int_{T'}\abs{\tilde W_1(\tilde{t}_1\tilde{k}_1)}^2\modulus_{B'}(t_1)^{-1}\, dt_1\, dk_1\times
\int_{K'}\int_{T'}\sigma'(t_1^{-1}t_2)^{-2d'}\, dt_1\, dk_1=
C_{d'}\int_{N'\bs G'}\abs{\tilde W_1(\tilde{g})}^2\, dg'
\]
where $C_{d'}<\infty$ provided that $2d'>n$. It remains to show the absolute convergence of
\[
\int_{K'}\int_{T'}\whitform(W_1',\Phi,\tilde{t}_2\tilde{k}_2,\tfrac12)\modulus_{B'}(t_2)^{-\frac12}\, dt_2\, dk_2.
\]
This follows (for any unitarizable $\pi$) from \cite[Lemma 4.6]{1401.0198}.
\end{proof}

\section{Odd special orthogonal groups --- the generic case}
\label{s: so}

Let $\OO(2n+1)$ be the (split) orthogonal group of the quadratic space $(F^{2n+1}, b(x,y)={}^t x w_{2n+1} y)$:
\[
 \OO(2n+1)=\{g\in\GL_{2n+1}: {}^tgw_{2n+1}g=w_{2n+1}\}.
\]
Let $\SO(2n+1)=\OO(2n+1)\cap\SL(2n+1)$ be the special orthogonal group.
Let $\ON$ be the maximal unipotent subgroup of $\OG$ consisting of upper unitriangular matrices.
Let $\OP=\OM\ltimes\OU$ be the Siegel parabolic subgroup of $\OG$ with Levi component $\OM=\olevi(\GL_n)$ where $\olevi(m)=\diag(m,1,m^*)$
for $m \in \GL_n$, and unipotent radical $\OU$, so that $\ON=\olevi(N_{\GL_n})\ltimes \OU$.
Here $N_{\GL_n}$ is the group of upper unitriangular matrices in $\GL_n$.

By the results of Jiang--Soudry \cite{MR1983781, MR2058617}\footnote{We emphasize that this is proved independently of the local
Langlands correspondence for $\GL_m$.}, there exists a unique injection
\[
 \JSlift : \Irr_{\gen}\SO(2n+1) \longrightarrow \Irr\GL_{2n}
\]
characterized by the equalities
\begin{equation}
\label{eq: gamma}
 \gamma^{\an}(s, \JSlift(\sigma) \times \tau, \psi) = \gamma^{\an}(s, \sigma \times \tau, \psi)
\end{equation}
for any $\sigma \in \Irr_{\gen}\SO(2n+1)$ and $\tau \in \Irr_{\gen}\GL_m$, $m\ge1$.
Here the $\gamma$-factor on the right is the one defined in \cite{Kaplan}, based on \cite{MR1169228}.
Moreover, this injection restricts to a bijection
\[
 \JSlift : \Irr_{\sqr,\gen}\SO(2n+1) \longrightarrow \Irr_{\msqr}\GL_{2n}.
\]

Let $\Omega_{\psi}$ be the Weil representation of $\Mp_n \times \OO(2n+1)$ with respect to $\psi$.
(We follow the conventions of \cite{MR2059223,MR3047069}.)
Up to equivalence, it depends only on the $(F^*)^2$-orbit of $\psi$.
For any $\sigma' \in \Irr \OO(2n+1)$, the maximal $\sigma'$-isotypic quotient of $\Omega_\psi$ is of the form
\[
 \Thetaone_\psi(\sigma') \otimes \sigma'
\]
for some smooth representation $\Thetaone_\psi(\sigma')$ of $\Mp_n$ of finite length
(see \cite[p.~46, Lemme and p.~69, Th\'eor\`eme principal]{MR1041060}).
Let $\thetaone_\psi(\sigma')$ be the maximal semisimple quotient of $\Thetaone_\psi(\sigma')$.
The Howe duality conjecture, which was proved by Waldspurger \cite{MR1159105} for $p \ne 2$,
asserts that either $\Thetaone_\psi(\sigma')$ is zero or $\Thetaone_\psi(\sigma')$ admits a unique irreducible quotient, i.e.,
$\thetaone_\psi(\sigma')$ is irreducible.\footnote{The proof of the Howe duality conjecture in general was recently completed by Gan--Takeda \cite{1407.1995} for any $p$.
This streamlines the proof of Proposition \ref{prop: thetatemp}.
Nevertheless, we keep the original argument since it will also be used in the proof of Proposition \ref{prop: theta-real}.}
Let $\sigma \in \Irr \SO(2n+1)$.
Then by \cite[Corollary 6.4]{MR2999299}, there exists a unique extension $\sigma'$ of $\sigma$ to $\OO(2n+1)$ such that
$\Thetaone_\psi(\sigma')$ is non-zero. Set
\[
 \Thetaone_\psi(\sigma) = \Thetaone_\psi(\sigma'), \qquad
 \thetaone_\psi(\sigma) = \thetaone_\psi(\sigma').
\]
Similarly, for any $\tilde \pi \in \Irr \Mp_n$,
we have a smooth representation $\Thetatwo_\psi(\tilde \pi)$ of $\OO(2n+1)$ of finite length
and its maximal semisimple quotient $\thetatwo_\psi(\tilde \pi)$.
We regard $\Thetatwo_\psi(\tilde \pi)$ and $\thetatwo_\psi(\tilde \pi)$ as representations of $\SO(2n+1)$ by restriction.
Once again for $p\ne2$, $\thetatwo_\psi(\tilde \pi)$ is either zero or irreducible.

\begin{proposition}
\label{prop: theta}
\begin{enumerate}

\item
If $\sigma \in \Irr_{\gen}\SO(2n+1)$, then $\thetaone_\psi(\sigma)\in\Irr_{\genpsi{\psi_{\tilde N}^{-1}}}\Mp_n$.

\item Let $\tilde \pi \in \Irr_{\genpsi{\psi_{\tilde N}^{-1}}}\Mp_n$.
Then precisely one irreducible subquotient of $\Thetatwo_\psi(\tilde \pi)$ is generic.
In particular, $\Thetatwo_\psi(\tilde \pi)$ is non-zero.
\end{enumerate}
\end{proposition}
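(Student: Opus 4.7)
The plan is to extract Whittaker functionals through the Weil representation $\Omega_\psi$ of $\Mp_n \times \OO(2n+1)$, using its canonical Whittaker--Whittaker functional. I would first set up notation: let $\psi_{\ON}$ denote the non-degenerate character of $\ON$ whose restriction to $\olevi(N_{\GL_n})$ is the standard Whittaker character and whose restriction to $\OU$ is trivial; since $\ON \subset \OG$ and $\OG$ is adjoint, the genericity of $\sigma \in \Irr \OG$ amounts to $\Hom_{\ON}(\sigma, \psi_{\ON}) \ne 0$. The key input I would establish first is
\[
 \dim \Hom_{\tilde N \times \ON}(\Omega_\psi, \psi_{\tilde N}^{-1} \boxtimes \psi_{\ON}) = 1,
\]
together with a compatible matching of the characters $\psi_{\tilde N}^{-1} \leftrightarrow \psi_{\ON}$ coming from the explicit action of $N' \hookrightarrow N$ via $\eta$. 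I would prove this by a standard mixed-model computation: realizing $\Omega_\psi$ on $\mathcal{S}(\Mat_{n,2n+1}(F))$, filtering by rank of the Schwartz coordinate, and verifying that only the open full-rank stratum survives the twisted coinvariants on both sides.

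For part (1), I would fix $\sigma \in \Irr_{\gen}\OG$ and let $\sigma' \in \Irr \OO(2n+1)$ be its Gan--Savin extension with $\Thetaone_\psi(\sigma') \ne 0$. By Howe duality, $\thetaone_\psi(\sigma)$ is the unique irreducible quotient of $\Thetaone_\psi(\sigma')$, giving a surjection $\Omega_\psi \twoheadrightarrow \thetaone_\psi(\sigma) \boxtimes \sigma'$. Tensoring with a non-zero $\mu \in \Hom_{\ON}(\sigma', \psi_{\ON})$ (one-dimensional by genericity of $\sigma$) and any $\lambda \in \Hom_{\tilde N}(\thetaone_\psi(\sigma), \psi_{\tilde N}^{-1})$ would give a natural injection
\[
 \Hom_{\tilde N}(\thetaone_\psi(\sigma), \psi_{\tilde N}^{-1}) \hookrightarrow \Hom_{\tilde N \times \ON}(\Omega_\psi, \psi_{\tilde N}^{-1} \boxtimes \psi_{\ON}) \cong \C.
\]
I would then invoke a first-occurrence / non-vanishing argument (showing the unique Whittaker--Whittaker functional $\ell$ does not vanish on the $\sigma'$-isotypic quotient, the hypothesis $\Thetaone_\psi(\sigma') \ne 0$ being essential) to force this injection to be surjective, yielding $\psi_{\tilde N}^{-1}$-genericity of $\thetaone_\psi(\sigma)$.

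For part (2), the symmetric construction applied to $\tilde\pi$ --- composing a Whittaker functional $\lambda$ on $\tilde\pi$ with the quotient $\Omega_\psi \twoheadrightarrow \tilde\pi \boxtimes \Thetatwo_\psi(\tilde\pi)$ --- would yield, once non-vanishing of $\Thetatwo_\psi(\tilde\pi)$ is established, a non-zero $\OG$-equivariant map $J_{\tilde N, \psi_{\tilde N}^{-1}}(\Omega_\psi) \to \Thetatwo_\psi(\tilde\pi)$. The $\psi_{\ON}$-Whittaker dimension of $J_{\tilde N, \psi_{\tilde N}^{-1}}(\Omega_\psi)$ equals one by the key identity, so composition bounds $\dim \Hom_{\ON}(\Thetatwo_\psi(\tilde\pi), \psi_{\ON})$ by one. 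Exactness of the twisted Jacquet functor along $\ON$ then gives
\[
 \dim \Hom_{\ON}(\Thetatwo_\psi(\tilde\pi), \psi_{\ON}) = \sum_\tau m_\tau \dim \Hom_{\ON}(\tau, \psi_{\ON}),
\]
summed over the irreducible subquotients $\tau$ of $\Thetatwo_\psi(\tilde\pi)$ with multiplicities $m_\tau$; uniqueness of Whittaker models on $\OO(2n+1)$ bounds each summand by one, forcing exactly one subquotient to be generic with multiplicity one.

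The main obstacles will be (i) the Whittaker--Whittaker one-dimensionality and character matching, which require care with the metaplectic cocycle and the polarization of $\Omega_\psi$ but are of standard type in the theta-correspondence literature (cf.\ M\oe glin--Vign\'eras--Waldspurger), and (ii) the non-vanishing / first-occurrence arguments in both parts. For (ii), the crucial fact I would rely on is that the dual pair $(\Mp_n, \OO(2n+1))$ lies in the stable range for the generic spectrum, so that the Whittaker--Whittaker functional $\ell$ genuinely detects the first-occurrence isotypic pieces on both sides.
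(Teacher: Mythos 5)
Your proposal is built around the wrong key input, and as a consequence both halves of the argument leave genuine non-vanishing gaps. You propose to prove and then use the dimension count $\dim \Hom_{\tilde N \times \ON}(\Omega_\psi, \psi_{\tilde N}^{-1} \boxtimes \psi_{\ON}) = 1$. What the paper actually uses, and what is needed, is strictly stronger: the identification of the twisted Jacquet module $(\Omega_\psi)_{\ON, \psi_{\ON}} \cong \ind^{\Mp_n}_{\tilde N}(\psi_{\tilde N}^{-1})$ as an $\Mp_n$-module (Mao--Rallis). Only this structural isomorphism yields, for \emph{every} $\tilde\pi \in \Irr\Mp_n$, the crucial chain
\[
\Hom_{\Mp_n \times \SO(2n+1)}\bigl(\Omega_\psi,\, \tilde\pi \otimes \Ind^{\SO(2n+1)}_{\ON}(\psi_{\ON})\bigr) \simeq \Hom_{\Mp_n}\bigl(\tilde\pi^\vee,\, \Ind^{\Mp_n}_{\tilde N}(\psi_{\tilde N})\bigr),
\]
which simultaneously controls existence and uniqueness of Whittaker data. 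A mere one-dimensionality tells you there is a single Whittaker--Whittaker functional $\ell$ on $\Omega_\psi$, but gives no way to decide through \emph{which} isotypic quotients $\ell$ factors; a priori $\ell$ could detect a single irreducible generic representation of $\Mp_n$ and be blind to all others.

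This shows up as a concrete gap in both parts. In part (1) you want to promote your injection into $\C$ to a surjection via a ``first-occurrence / non-vanishing argument'' together with a ``stable range'' appeal. The dual pair $(\Mp_n, \OO(2n+1))$ is of almost equal rank and nowhere near the stable range, and the assertion that $\ell$ descends to $\thetaone_\psi(\sigma) \boxtimes \sigma'$ is exactly what must be proved, not invoked. With the structural identification the issue evaporates: by exactness of the $\ON$-twisted Jacquet functor, the non-zero map $\Omega_\psi \to \tilde\pi \otimes \sigma$ (for $\tilde\pi$ any irreducible constituent of $\thetaone_\psi(\sigma)$) produces a non-zero element of $\Hom_{\Mp_n}(\tilde\pi^\vee, \Ind^{\Mp_n}_{\tilde N}(\psi_{\tilde N}))$, so $\tilde\pi^\vee$ is $\psi_{\tilde N}$-generic; one then still needs the contragredient argument (via $\tilde\pi^\vee = \tilde\pi \circ \widetilde{\Ad}_\delta$ and conjugacy of the relevant characters) to convert this into $\psi_{\tilde N}^{-1}$-genericity of $\tilde\pi$, a step for which you have no counterpart. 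In part (2) your argument is explicitly circular: it produces the upper bound $\dim \Hom_{\ON}(\Thetatwo_\psi(\tilde\pi), \psi_{\ON}) \le 1$ ``once non-vanishing of $\Thetatwo_\psi(\tilde\pi)$ is established,'' yet never establishes that non-vanishing nor the lower bound, and these are precisely the content of the statement. In the paper, the same chain of isomorphisms identifies the linear dual of $\Thetatwo_\psi(\tilde\pi)_{\ON,\psi_{\ON}}$ with $\Hom_{\Mp_n}(\tilde\pi^\vee, \Ind^{\Mp_n}_{\tilde N}(\psi_{\tilde N}))$, which is one-dimensional \emph{and non-zero} because $\tilde\pi^\vee$ is generic; existence and uniqueness come out of a single computation rather than being assembled from an upper bound plus an unproven non-vanishing.
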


\begin{proof}
Let $\sigma \in \Irr_{\gen}\SO(2n+1)$.
For any irreducible constituent $\tilde \pi$ of $\thetaone_\psi(\sigma)$,
we have a non-zero $\Mp_n \times \SO(2n+1)$-equivariant map
\[
 \Omega_{\psi} \longrightarrow \tilde \pi \otimes \sigma \longrightarrow \tilde \pi \otimes \Ind^{\SO(2n+1)}_{\ON}(\psi_{\ON}),
\]
where $\psi_{\ON}$ is a non-degenerate character of $\ON$.
By \cite[Proposition 2.1]{MR2059223}, the twisted Jacquet module $(\Omega_\psi)_{\ON, \psi_{\ON}}$ of $\Omega_\psi$ is isomorphic as an $\Mp_n$-module to
\[
 \ind^{\Mp_n}_{\tilde N}(\psi_{\tilde N}^{-1}),
\]
where $\ind$ denotes compact induction.
Hence
\begin{equation} \label{eq: thetawhit}
\begin{aligned}
 \Hom_{\Mp_n \times \SO(2n+1)}(\Omega_{\psi}, \tilde \pi \otimes \Ind^{\SO(2n+1)}_{\ON}(\psi_{\ON}))
 & \simeq \Hom_{\Mp_n \times \ON}(\Omega_{\psi}, \tilde \pi \otimes \psi_{\ON}) \\
 & \simeq \Hom_{\Mp_n}((\Omega_{\psi})_{\ON, \psi_{\ON}}, \tilde \pi) \\
 & \simeq \Hom_{\Mp_n}(\ind^{\Mp_n}_{\tilde N}(\psi_{\tilde N}^{-1}), \tilde \pi) \\
 & \simeq \Hom_{\Mp_n}(\tilde \pi^\vee, \Ind^{\Mp_n}_{\tilde N}(\psi_{\tilde N})),
\end{aligned}
\end{equation}
so that $\tilde \pi^\vee$ is $\psi_{\tilde N}$-generic.
Set $\delta = \diag(I_n, -I_n)$.
The adjoint action $\Ad_\delta$ of $\delta$ on $\Sp_n$ lifts uniquely to an action $\widetilde{\Ad}_\delta$ on $\Mp_n$.
By \cite[p.~92, Th\'eor\`eme]{MR1041060} combined with \cite[Corollaire 4.3.3]{MR3053009}, we have $\tilde \pi^\vee = \tilde \pi \circ \widetilde{\Ad}_\delta$.
Since $\psi_{\tilde N}$ and $\psi_{\tilde N}^{-1} \circ \widetilde{\Ad}_\delta$ are conjugate,
$\tilde \pi$ is $\psi_{\tilde N}^{-1}$-generic.
On the other hand, by \cite{MR818351}, the supercuspidal support of $\tilde \pi$ is uniquely determined by $\sigma$.
Since any element in $\Irr_{\genpsi{\psi_{\tilde N}^{-1}}}\Mp_n$ is uniquely determined by its supercuspidal support (cf.~\cite{MR2366363}),
$\thetaone_\psi(\sigma)$ must be isotypic. But by \cite{MR2782253}, $\thetaone_\psi(\sigma)$ is multiplicity-free and hence irreducible.
This proves the first part.

Let $\tilde \pi \in \Irr_{\genpsi{\psi^{-1}_{\tilde N}}}\Mp_n$.
Then
\begin{align*}
 \Hom_{\C}(\Thetatwo_{\psi}(\tilde \pi)_{\ON, \psi_{\ON}}, \C)
 & \simeq \Hom_{\ON}(\Thetatwo_{\psi}(\tilde \pi), \psi_{\ON}) \\
 & \simeq \Hom_{\SO(2n+1)}(\Thetatwo_{\psi}(\tilde \pi), \Ind^{\SO(2n+1)}_{\ON}(\psi_{\ON})) \\
 & \simeq \Hom_{\Mp_n \times \SO(2n+1)}(\Omega_{\psi}, \tilde \pi \otimes \Ind^{\SO(2n+1)}_{\ON}(\psi_{\ON}))\\
 & \simeq \Hom_{\Mp_n}(\tilde \pi^\vee, \Ind^{\Mp_n}_{\tilde N}(\psi_{\tilde N}))
\end{align*}
by \eqref{eq: thetawhit}. The second part follows.
\end{proof}

\begin{proposition}
\label{prop: thetatemp}
Let $* = \temp$ or $\sqr$.
\begin{enumerate}

\item
If $\sigma \in \Irr_{*, \gen}\SO(2n+1)$, then $\Thetaone_\psi(\sigma)\in\Irr_{*, \genpsi{\psi_{\tilde N}^{-1}}}\Mp_n$.

\item
If $\tilde \pi \in \Irr_{*,\genpsi{\psi_{\tilde N}^{-1}}}\Mp_n$, then $\Thetatwo_\psi(\tilde \pi)\in\Irr_{*, \gen}\SO(2n+1)$.
\end{enumerate}
\end{proposition}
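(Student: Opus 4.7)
The plan is to treat part (1) for $* = \sqr$ first via descent and Jiang--Soudry, then extend to $* = \temp$ via Kudla's filtration, and deduce (2) by a symmetric argument.

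For $\sigma \in \Irr_{\sqr,\gen}\SO(2n+1)$, set $\pi := \JSlift(\sigma) \in \Irr_{\msqr}\GL_{2n}$ and $\tilde\pi := \des_{\psi^{-1}}(\pi)$, which by Theorem \ref{thm: bijection} lies in $\Irr_{\sqr,\genpsi{\psi_{\tilde N}^{-1}}}\Mp_n$. First I would identify $\thetaone_\psi(\sigma)$ with $\tilde\pi$: both are in $\Irr_{\genpsi{\psi_{\tilde N}^{-1}}}\Mp_n$ by Proposition \ref{prop: theta}(1) and Theorem \ref{thm: bijection}, and Kudla's calculation \cite{MR818351} shows that their supercuspidal supports agree (each being determined by the supercuspidal support of $\sigma$ through compatible recipes). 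Uniqueness of $\psi_{\tilde N}^{-1}$-generic representations of $\Mp_n$ with a given supercuspidal support \cite{MR2366363} then yields $\thetaone_\psi(\sigma) = \tilde\pi$, which is square-integrable.

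Next I would bootstrap from $\thetaone$ to $\Thetaone$. Any irreducible subquotient $\rho$ of $\Thetaone_\psi(\sigma)$ is $\psi_{\tilde N}^{-1}$-generic by the twisted Jacquet module computation \eqref{eq: thetawhit} and shares the supercuspidal support of $\tilde\pi$, so $\rho = \tilde\pi$; the multiplicity-freeness from \cite{MR2782253} then forces $\Thetaone_\psi(\sigma) = \tilde\pi$, settling (1) for $* = \sqr$. For $* = \temp$, I would realize $\sigma$ as the unique generic irreducible subrepresentation of some $\Ind_{Q}^{\SO(2n+1)}(\tau_1 \otimes \cdots \otimes \tau_j \otimes \sigma_0)$ with $\tau_i \in \Irr_{\sqr,\gen}\GL_{m_i}$ and $\sigma_0 \in \Irr_{\sqr,\gen}\SO(2n_0+1)$, then apply Kudla's filtration (compatibility of $\Thetaone_\psi$ with parabolic induction) in conjunction with the just-proved square-integrable case to realize $\Thetaone_\psi(\sigma)$ as the unique generic irreducible subquotient of a standard tempered induction on $\Mp_n$, hence tempered.

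Part (2) is obtained by symmetry: given $\tilde\pi \in \Irr_{*,\genpsi{\psi_{\tilde N}^{-1}}}\Mp_n$, take $\sigma := \JSlift^{-1}(\des_{\psi^{-1}}^{-1}(\tilde\pi))$ in the square-integrable case (and the analogous induction datum in the tempered case); part (1) gives $\Thetaone_\psi(\sigma) = \tilde\pi$, and the reverse identification $\Thetatwo_\psi(\tilde\pi) = \sigma$ follows from Proposition \ref{prop: theta}(2), matching $\thetatwo_\psi(\tilde\pi)$ with $\sigma$ by supercuspidal support and then upgrading from $\thetatwo$ to $\Thetatwo$ as before. The main technical obstacle will be controlling the full $\Thetaone$ (or $\Thetatwo$) beyond its cosocle, particularly in the tempered case: excluding non-tempered constituents in the Kudla filtration requires combining genericity of every subquotient from \eqref{eq: thetawhit} with multiplicity-freeness and the exponent information in the filtration.
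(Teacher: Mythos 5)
Your proposal takes a substantially different route from the paper's, and it has serious gaps.

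The paper's proof of part (1) is quite short: it invokes Gan--Savin's result \cite[Theorem 8.1]{MR2999299}, which directly asserts that for tempered (resp.\ square-integrable) $\sigma$, the \emph{full} big theta lift $\Thetaone_\psi(\sigma)$ is already semisimple and tempered (resp.\ square-integrable). Combined with Proposition \ref{prop: theta}(1), which shows $\thetaone_\psi(\sigma)$ is irreducible and $\psi_{\tilde N}^{-1}$-generic, this gives part (1) immediately. Your proposal never cites this semisimplicity theorem, and instead tries to bootstrap from $\thetaone_\psi$ to $\Thetaone_\psi$ by claiming that ``any irreducible subquotient $\rho$ of $\Thetaone_\psi(\sigma)$ is $\psi_{\tilde N}^{-1}$-generic by the twisted Jacquet module computation \eqref{eq: thetawhit}.'' This is not what \eqref{eq: thetawhit} gives you: the Frobenius-reciprocity chain produces a nonzero map out of $\Omega_\psi$, hence it constrains \emph{quotients} of $\Thetaone_\psi(\sigma)$ (equivalently, constituents of $\thetaone_\psi(\sigma)$), not arbitrary subquotients of $\Thetaone_\psi(\sigma)$. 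Likewise the multiplicity-one theorem of \cite{MR2782253} controls the maximal semisimple quotient, not the whole $\Thetaone_\psi(\sigma)$. Without Gan--Savin's semisimplicity, your bootstrap has no traction, which is exactly the ``main technical obstacle'' you acknowledge at the end but do not resolve.

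Your proposal also has a circularity problem: you invoke Theorem \ref{thm: bijection} to conclude $\des_{\psi^{-1}}(\JSlift(\sigma))$ is square-integrable, but in the paper the proof of Theorem \ref{thm: bijection} comes after and depends on Proposition \ref{prop: thetatemp} (via the bijectivity of $\Thetaone_\psi$ on $\Irr_{\sqr,\gen}$). The same objection applies to your use of the identification $\thetaone_\psi(\sigma) = \des_{\psi^{-1}}(\JSlift(\sigma))$, which is essentially Proposition \ref{prop: triangle}, also proved later with global input.

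Finally, for part (2) the paper's key new ingredient, which your proposal does not invoke at all, is Proposition \ref{prop: supp}: a tempered representation of $\SO(2n+1)$ with a Bessel model is generic. This is what lets the paper show that \emph{every} irreducible constituent of $\Thetatwo_\psi(\tilde\pi)$ is generic (the Jacquet module computation gives each constituent a Bessel model, not directly a Whittaker model), and then Proposition \ref{prop: theta}(2) (exactly one constituent is generic) forces irreducibility. Your ``by symmetry'' argument, matching supercuspidal supports, does not address why non-generic constituents cannot appear, so the passage from $\thetatwo_\psi$ to $\Thetatwo_\psi$ is again unjustified.
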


\begin{proof}
By the result of Gan--Savin \cite[Theorem 8.1]{MR2999299}, for any $\sigma \in \Irr_{\temp} \SO(2n+1)$ (resp.~$\sigma \in \Irr_{\sqr} \SO(2n+1)$),
$\Thetaone_\psi(\sigma)$ is semisimple (i.e., $\Thetaone_\psi(\sigma) = \thetaone_\psi(\sigma)$) and tempered (resp.~square-integrable).
Hence the first part follows from Proposition \ref{prop: theta}.

By [loc.~cit.], the same holds for $\Thetatwo_\psi(\tilde \pi)$.
Thus, to prove the second part, it remains to show that for $\tilde \pi \in \Irr_{\temp,\genpsi{\psi_{\tilde N}^{-1}}}\Mp_n$,
any irreducible constituent $\sigma$ of $\thetatwo_\psi(\tilde \pi)$ is generic.
We have a non-zero $\Mp_n \times \SO(2n+1)$-equivariant map
\[
 \Omega_{\psi} \longrightarrow \tilde \pi \otimes \sigma \longrightarrow \Ind^{\Mp_n}_{\tilde N}(\psi_{\tilde N}^{-1})  \otimes \sigma.
\]
By \cite[Proposition 2.1]{MR2059223}, the twisted Jacquet module $(\Omega_\psi)_{\tilde N, \psi_{\tilde N}^{-1}}$ of $\Omega_\psi$
is isomorphic as an $\SO(2n+1)$-module to
\[
 \ind^{\SO(2n+1)}_\bes(\psi_\bes),
\]
where $\bes$ is the Bessel subgroup of $\SO(2n+1)$ given by
\[
 \bes=\left\{ \left( \begin{smallmatrix} u&*&*\\ &\begin{smallmatrix} t&&\\&1&\\&&t^{-1}\end{smallmatrix}&*\\ &&u^*\end{smallmatrix} \right) :
  u \in N_{\GL_{n-1}}, \, t \in F^* \right\}
\]
and $\psi_\bes$ is the character of $\bes$ given by $\psi_\bes(b) = \psi(b_{1,2} + \dots + b_{n-2,n-1} + b_{n-1,n+1})$.
As in \eqref{eq: thetawhit}, we compute
\[
 \Hom_{\Mp_n \times \SO(2n+1)}(\Omega_{\psi}, \Ind^{\Mp_n}_{\tilde N}(\psi_{\tilde N}^{-1})  \otimes \sigma)
 \simeq \Hom_{\SO(2n+1)}(\sigma^\vee, \Ind^{\SO(2n+1)}_\bes(\psi_\bes)),
\]
so that $\sigma^\vee$ has a Bessel model, i.e., the Hom space on the right is non-zero.
Since $\sigma^\vee = \sigma$ by \cite[p.~91, Th\'eor\`eme]{MR1041060} and we already know that $\sigma$ is tempered,
the following proposition concludes the proof.
\end{proof}

\begin{proposition}\label{prop: supp}
Let $\sigma \in \Irr_{\temp}\SO(2n+1)$.
If $\sigma$ has a Bessel model, then $\sigma$ is generic.
\end{proposition}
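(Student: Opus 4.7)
\emph{Plan.} I would approach Proposition \ref{prop: supp} via the theta correspondence for the lower-rank dual pair $(\Mp_{n-1}, \SO(2n+1))$, one step below the correspondence $(\Mp_n, \SO(2n+1))$ used in Proposition \ref{prop: theta}. The guiding observation is that the Bessel datum $(\bes, \psi_\bes)$ plays the role of the ``Whittaker datum'' for this lower-rank correspondence, converting the hypothesis of a Bessel model on $\sigma$ into the existence of a generic theta lift to $\Mp_{n-1}$.

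The key technical input is an isomorphism of $\SO(2n+1)$-modules,
\[
(\Omega_\psi^{(n-1)})_{\tilde N_{n-1},\, \psi_{\tilde N_{n-1}}^{-1}} \simeq \ind_\bes^{\SO(2n+1)}(\psi_\bes),
\]
where $\Omega_\psi^{(n-1)}$ denotes the Weil representation of $\Mp_{n-1} \times \OO(2n+1)$ and $\tilde N_{n-1}$ is the preimage in $\Mp_{n-1}$ of the maximal unipotent of $\Sp_{n-1}$. This should be a direct mixed-model calculation parallel to \cite[Proposition 2.1]{MR2059223}. With this in hand, Frobenius reciprocity (as in the chain preceding \eqref{eq: thetawhit}) converts a Bessel functional on $\sigma$ into a nonzero $\Mp_{n-1} \times \SO(2n+1)$-equivariant map $\Omega_\psi^{(n-1)} \to \tilde \pi'{}^\vee \otimes \sigma^\vee$ for some $\tilde \pi' \in \Irr \Mp_{n-1}$ that is $\psi_{\tilde N_{n-1}}^{-1}$-generic; temperedness of $\sigma$, combined with (the analog for this pair of) \cite[Theorem 8.1]{MR2999299}, then forces $\tilde \pi'$ to be tempered.

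To conclude, I would transfer the genericity from $\tilde \pi'$ to the equal-rank lift $\tilde \pi := \thetaone_\psi(\sigma) \in \Irr_{\temp} \Mp_n$ (which is nonzero by the Rallis tower property) by comparing the Schr\"odinger models of $\Omega_\psi^{(n-1)}$ and $\Omega_\psi$ under the inclusion $\Sp_{n-1} \hookrightarrow \Sp_n$. Once $\tilde \pi \in \Irr_{\temp, \genpsi{\psi_{\tilde N}^{-1}}}\Mp_n$ is established, Proposition \ref{prop: theta}(2) applied to $\tilde\pi$ identifies $\sigma$ with the unique generic subquotient of $\Thetatwo_\psi(\tilde \pi)$, yielding genericity of $\sigma$. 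The delicate point is this last transfer: whereas non-vanishing and temperedness of theta lifts propagate well along the Witt tower, Whittaker genericity with respect to the symplectic-side character does not automatically, so the argument demands a careful mixed-model computation tracking the Whittaker functional across the adjacent dual pairs. An alternative route would bypass the tower and construct a Whittaker functional on $\sigma$ directly from the Bessel functional by a regularized integral over the one-dimensional unipotent $U_3$ of the middle $\SO(3)$ (replacing the torus $\diag(t,1,t^{-1})$ inside $\bes$), but making sense of this integral for tempered $\sigma$ and proving its non-vanishing pose comparable difficulties.
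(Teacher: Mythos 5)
Your proposed route is genuinely different from the paper's, and it has a real gap that the paper's method avoids entirely. The paper proves Proposition \ref{prop: supp} by a purely local, elementary argument: Lemma \ref{L: supp1} (a ``root exchange'' argument going back to \cite{MR519356, MR1671452}) shows that a Bessel model for $\sigma$ is equivalent to a $(\bes^k,\wgt^s\psi_{\bes^k})$ model for a chain of auxiliary subgroups $\bes^k$ with varying twists, terminating at $k=2$ with twist $\wgt^{n-1}$. Writing $\bes^2 = T_1\ltimes N^\sharp$ with $N^\sharp=\{u\in\ON:u_{1,2}=0\}$, the paper then dichotomizes: either the root subgroup $\ON_{1,2}$ acts non-trivially on the coinvariants $\sigma_{(N^\sharp,\psi_{\bes^2})}$ (giving a Whittaker functional), or it acts trivially, in which case the character $\inj_{1,1}(t)\mapsto\abs{t}^{-\frac12}$ occurs as an exponent in the Jacquet module along the maximal parabolic with Levi $\GL_1\times\SO(2n-1)$, contradicting temperedness of $\sigma$. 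No theta correspondence appears.

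Your plan, by contrast, goes through the lower-rank dual pair $(\Mp_{n-1},\SO(2n+1))$ and attempts to transfer genericity along the Witt tower. There are two concrete problems. First, the identification of the twisted Jacquet module of the Weil representation with $\ind^{\SO(2n+1)}_\bes(\psi_\bes)$ that the paper uses (\cite[Proposition 2.1]{MR2059223}) is for the equal-size pair $(\Mp_n,\SO(2n+1))$, not for $(\Mp_{n-1},\SO(2n+1))$; for the lower-rank pair the corresponding twisted Jacquet module has a different structure, so the claimed isomorphism $(\Omega_\psi^{(n-1)})_{\tilde N_{n-1},\psi_{\tilde N_{n-1}}^{-1}}\simeq\ind^{\SO(2n+1)}_\bes(\psi_\bes)$ is not the known computation and would need an independent (and I suspect false) verification. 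Second, even granting that step, the transfer of genericity from $\thetaone$-lift on $\Mp_{n-1}$ to the lift on $\Mp_n$ is left as a ``delicate point,'' and this is exactly where the approach breaks down: there is no general statement that a $\psi$-generic first occurrence one step down the tower forces the equal-rank lift to be generic, and a ``careful mixed-model computation'' here would effectively have to reprove (or circumvent) Proposition \ref{prop: supp} itself. The paper's root-exchange-plus-exponents argument sidesteps all of this and is logically cleaner, since Proposition \ref{prop: supp} is precisely the input needed to establish the theta-lift properties in Proposition \ref{prop: thetatemp} that your plan would want to invoke.
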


We remark that the proposition does not hold for an arbitrary $\sigma$; evidently the trivial representation of $\SO(3)$
has a Bessel model, but is not generic.

To prove Proposition \ref{prop: supp}, we need to introduce more notation.
Let $T_1=\{\inj_{1,1}(t) := \diag(t,I_{2n-1},t^{-1}) : t \in F^* \}$.
Define the one-parameter root subgroup $\ON_{i,j}=\{\inj_{i,j}(x): x\in F\}$ of $\ON$ so that
$t\inj_{i,j}(x)t^{-1}=\inj_{i,j}(\frac{t_i}{t_j}x)$ for $t=\diag(t_1,\ldots,t_{2n+1})\in \OG$.
We recall the definition of the Bessel subgroup in \cite{MR3047069}:
$$\bes'=(T_1\ltimes(\olevi(\Ncirc)\ltimes\prod_{i=2}^n\ON_{i,1}))\ltimes U^\sharp,$$
where $\Ncirc=\{\sm{1}{}{}{u}: u\in N_{\GL_{n-1}}\}\subset N_{\GL_n}$ and $U^\sharp=\{u\in \OU: u_{1,n+1}=0\}$.
For instance, for $n=3$, $\bes'$ consists of the matrices in $\SO(7)$ of the form
\[
\left(\begin{smallmatrix}*&0&0&0&*&*&*\\ *&1&*&*&*&*&*\\ *&0&1&*&*&*&*\\
0&0&0&1&*&*&0\\0&0&0&0&1&*&0\\0&0&0&0&0&1&0\\0&0&0&0&*&*&*\end{smallmatrix}\right).
\]
This group is conjugate by a Weyl element in $\OG$ to the `standard' Bessel subgroup $\bes$ in the proof of Proposition \ref{prop: thetatemp}.
Define a function $\Psi$ on $\SO(2n+1)$ by
\[
\Psi(g)=\psi(g_{2,3}+\dots+g_{n,n+1}).
\]
Then $\psi_{\bes'} := \Psi\rest_{\bes'}$ is a character of $\bes'$.
We also introduce some auxiliary groups.
For $k=2,\ldots,n$, let $\bes^k$ be the group generated by $T_1$, $\olevi(\Ncirc)$, $\ON_{i,1}$ ($i<k$), $\ON_{1,j}$ ($j>k$) and $\OU$,
with $\psi_{\bes^k}$ being the restriction of $\Psi$ to $\bes^k$.
It is easy to check that $\psi_{\bes^k}$ is a character of $\bes^k$.
For $b\in \bes^k$, define $\wgt(b):=\abs{b_{1,1}}$; then $\wgt$ is a character of $\bes^k$.

For a closed subgroup $H$ of $\OG$ and a character $\chi$ of $H$, let $C^{\smth}(H\bs \OG,\chi)$ denote the space of left
$(H,\chi)$-equivariant functions on $\OG$ which are smooth under the right translation of $\OG$.
For $\sigma \in \Irr \OG$, we say that $\sigma$ has a $(H,\chi)$ model if there is a non-zero $\OG$-equivariant map
from the space of $\sigma$ to $C^{\smth}(H\bs \OG,\chi)$.
Note that $\sigma$ has a Bessel model if and only if it has a $(\bes',\psi_{\bes'})$ model.

The following lemma is a special case of \cite[Lemma~2.2]{MR1671452} (whose argument goes back to \cite{MR519356}).
For completeness we give a direct proof in the case at hand.
\begin{lemma} \label{L: supp1}
Let $\sigma \in \Irr \OG$.
Then $\sigma$ has a $(\bes',\psi_{\bes'})$ model if and only if it has a $(\bes^n,\wgt\psi_{\bes^n})$ model.
When $k=2,\ldots,n-1$ and $s \in \C$, $\sigma$ has a $(\bes^k,\wgt^s\psi_{\bes^k})$ model if and only if it has a
$(\bes^{k+1},\wgt^{s-1}\psi_{\bes^{k+1}})$ model.
\end{lemma}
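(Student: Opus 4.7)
The plan is to apply the root exchange lemma of \cite[Lemma~2.2]{MR1671452} (whose technique originates in \cite{MR519356}), which handles exactly this type of rearrangement of unipotent subgroups in a character-equivariant model.

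For part (b), fix $k \in \{2,\dots,n-1\}$ and let
\[
 X = \ON_{1,k+1}, \qquad Y = \ON_{k,1}, \qquad H_0 = \bes^k \cap \bes^{k+1}.
\]
Explicitly, $H_0$ is generated by $T_1$, $\olevi(\Ncirc)$, the subgroups $\ON_{i,1}$ with $i<k$, the subgroups $\ON_{1,j}$ with $j>k+1$, and $\OU$. Then $\bes^k$ and $\bes^{k+1}$ are obtained from $H_0$ by adjoining $X$ and $Y$ respectively, the character $\Psi$ is trivial on both $X$ and $Y$ (since its defining entries $(2,3),\dots,(n,n+1)$ do not meet the positions $(1,k+1)$ or $(k,1)$), and $\psi_{\bes^k}$ and $\psi_{\bes^{k+1}}$ agree on $H_0$. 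Given $W \in C^{\smth}(\bes^k\bs\OG,\wgt^s\psi_{\bes^k})$, I would form
\[
 W^{\star}(g) = \int_Y W(yg)\, d_\psi y
\]
and verify that $W^{\star}$ realizes a $(\bes^{k+1},\wgt^{s-1}\psi_{\bes^{k+1}})$ model. Left-invariance under $Y$ is automatic; left-equivariance under the unipotent generators of $H_0$ is a routine change-of-variables computation, using triviality of $\Psi$ on $Y$ and on any commutators $[H_0,Y]$ lying outside $H_0$; and for $\inj_{1,1}(t) \in T_1$, the $T_1$-weight on $Y = \ON_{k,1}$ is $t^{-1}$, so the Jacobian of the conjugation change of variable equals $|t|^{-1}$, producing the shift $\wgt^s \mapsto \wgt^{s-1}$. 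The reverse implication is symmetric, integrating instead over $X$; up to a Haar-measure normalization the two transforms are mutual inverses.

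Part (a) is proved by exactly the same argument, exchanging $\ON_{n,1} \subset \bes'$ with $\ON_{1,n+1}$, the unique root subgroup of $\OU$ that is not contained in $U^{\sharp}$. The $T_1$-weight on $\ON_{1,n+1}$ is $t^{+1}$ rather than $t^{-1}$, which produces a shift of $\wgt^{+1}$ rather than $\wgt^{-1}$, matching the statement.

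The main obstacle will be the absolute convergence of the exchange integral. Since $\sigma$ is only assumed smooth, no decay of matrix coefficients is available. The standard device is to argue by smoothness alone: for fixed $g$ and for $W$ fixed by a compact open subgroup $K_0 \le \OG$, there is a compact open subgroup $Y_0 \le Y$ outside of which $W(yg)$ vanishes identically, because conjugating a sufficiently large element of $Y$ by an appropriate element of $H_0$ produces a non-trivial character of $K_0$ under which the value of $W$ must be zero. This is precisely the compactness mechanism of \cite[Lemma~2.2]{MR1671452}. Once convergence is secured, the equivariance computations above establish both claimed equivalences of models.
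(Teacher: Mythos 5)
Your approach—realizing the model transfer by integrating over the transverse root subgroup, with convergence justified by the root-exchange method of \cite[Lemma~2.2]{MR1671452}—is essentially the same as the paper's, and your Jacobian computation on $T_1$ correctly produces the $\wgt$-shifts in both parts. The paper, however, handles convergence and non-triviality together and more directly via the Fourier identity $\phi*_{n,1}f(\inj_{1,n+1}(x))=\hat\phi(x)\,f(\inj_{1,n+1}(x))$: taking $\phi_0$ to be a normalized indicator with $f=\phi_0*_{n,1}f$ shows at once that the restriction of $f$ to the transverse one-parameter subgroup is Schwartz, and the resulting relation $T_{1,n+1}(\phi*_{n,1}f)=\hat\phi*_{1,n+1}f(e)$ shows that the transfer $f\mapsto T_{1,n+1}(f(\cdot\, g))$ cannot vanish identically on the model.

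Two points in your write-up are not right as stated. First, the compactness heuristic is mis-attributed to $H_0$. In part (b), every generator of $H_0=\bes^k\cap\bes^{k+1}$ either commutes with $Y=\ON_{k,1}$ or has commutator with $Y$ landing in a root subgroup on which $\Psi$ is trivial; in particular $[\ON_{k,k+1},\ON_{k,1}]=\{e\}$, since $2e_k-e_1-e_{k+1}$ is not a root of $\SO(2n+1)$. So conjugation by elements of $H_0$ cannot force the required vanishing. The correct mechanism uses $X=\ON_{1,k+1}$, which lies in $\bes^k$ but \emph{not} in $H_0$: right-translating $W(yg)$ by a small $x\in X\cap K_0$, commuting $x$ to the left past $y$, and using that $\psi_{\bes^k}$ is trivial on $X$ while $[X,Y]\subset\ON_{k,k+1}$ carries the non-trivial $\psi$, forces $W(yg)=0$ for $y$ large. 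Second, the claim that the two transfers are mutual inverses up to normalization is a formal Fourier-inversion heuristic and does not by itself establish that your transfer is non-zero on the model; you should substantiate it by an explicit identity of the type the paper records (or a genuine regularization argument).
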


\begin{proof}
It is convenient to use the notation $\phi*_{i,j}f=\int_F f(\,\cdot \,\inj_{i,j}(x))\phi(x)\,dx$ for a continuous function
$f$ on $\OG$ and a Schwartz function $\phi$ on $F$. We treat the first statement of the claim.

Let $f\in C^{\smth}(\bes'\bs \OG, \psi_{\bes'})$ and $\phi \in C_c^\infty(F)$. Observe that
 \begin{multline*}
 \phi*_{n,1}f(\inj_{1,n+1}(x))=\int_F f(\inj_{1,n+1}(x)\inj_{n,1}(y))\phi(y)\,dy
 \\
 =\int_F f([\inj_{1,n+1}(x),\inj_{n,1}(y)]\inj_{n,1}(y)\inj_{1,n+1}(x))\phi(y)\,dy.
 \end{multline*}
Since the commutator $[\inj_{1,n+1}(x),\inj_{n,1}(y)]$ belongs to $\bes'$ and $\psi_{\bes'}([\inj_{1,n+1}(x),\inj_{n,1}(y)])=\psi(-xy)$
(if we choose $\inj_{i,j}$ in a compatible way), the above integral is
$$\int_F f(\inj_{1,n+1}(x))\phi(y)\psi(-xy)\,dy= f(\inj_{1,n+1}(x))\hat\phi(x),$$
where $\hat\phi \in C_c^\infty(F)$ is the Fourier transform of $\phi$ with respect to $\psi^{-1}$.
Thus $\phi*_{n,1}f(\inj_{1,n+1}(x))$ is a Schwartz function in $x$. This implies that $f(\inj_{1,n+1}(x))$ is a Schwartz function in $x$.
We can define a linear form
$$T_{1,n+1}(f)=\int_F f(\inj_{1,n+1}(x))\,dx.$$
It is clear that $T_{1,n+1}(f(\,\cdot\,g)) \in C^{\smth}(\bes^n\bs \OG, \wgt\psi_{\bes^n})$ (as a function of $g$).
Moreover the above calculation shows that
\begin{equation}\label{eq: supp1}
T_{1,n+1}(\phi*_{n,1}f)=\int_F \phi*_{n,1}f(\inj_{1,n+1}(x))\,dx=\hat\phi*_{1,n+1}f(e).
\end{equation}
If $\sigma$ has a $(\bes',\psi_{\bes'})$ model, then the image of the model under $T_{1,n+1}$ is non-trivial by \eqref{eq: supp1}
and hence $\sigma$ has a $(\bes^n,\wgt\psi_{\bes^n})$ model.
The reverse direction is similar, as we have for $f\in C^{\smth}(\bes^n\bs \OG, \wgt\psi_{\bes^n})$ and $\phi\in C_c^\infty(F)$:
$$T_{n,1}(\phi*_{1,n+1}f):=\int_F \phi*_{1,n+1}f(\inj_{n,1}(x))\,dx=\hat\phi*_{n,1}f(e).$$
The proof of the second statement is similar, using the fact that when $k=2,\ldots,n-1$, for $f\in C^{\smth}(\bes^k\bs \OG, \wgt^s\psi_{\bes^k})$
and $\phi\in C_c^{\infty}(F)$:
$$T_{k,1}(\phi*_{1,k+1}f):=\int_F \phi*_{1,k+1}f(\inj_{k,1}(x))\,dx=\hat\phi*_{k,1}f(e),$$
and for $f\in C^{\smth}(\bes^{k+1}\bs \OG, \wgt^{s-1}\psi_{\bes^{k+1}})$ and $\phi\in C_c^{\infty}(F)$:
$$T_{1,k+1}(\phi*_{1,k+1}f):=\int_F \phi*_{k,1}f(\inj_{1,k+1}(x))\,dx=\hat\phi*_{1,k+1}f(e).$$
The lemma follows.
\end{proof}

\begin{proof}[Proof of Proposition \ref{prop: supp}]
If $\sigma$ has a Bessel model, then it also has a $(\bes^2, \wgt^{n-1}\psi_{\bes^2})$ model by Lemma \ref{L: supp1}.
Note that $\bes^2=T_1\ltimes N^\sharp$ where $N^\sharp=\{u\in \ON: u_{1,2}=0\}$.
For a character $\psi$ of a subgroup $J$ of $\SO(2n+1)$, denote by $\sigma_{(J,\psi)}$ the co-invariants of $\sigma$ with respect to $(J,\psi)$.
Write $\sigma_{(\bes^2, \wgt^{n-1}\psi_{\bes^2})}=A_{(T_1, \wgt^{n-1})}$ where $A=\sigma_{(N^\sharp,\psi_{\bes^2})}$.
Consider $A$ as a $T_1\ltimes \ON$-module.
If $\ON_{1,2}$ does not act trivially on $A$, then there is a non-trivial character $\psi$ of $\ON_{1,2}$ for which $A_{(\ON_{1,2},\psi)}\ne0$.
Hence $\sigma$ is generic.
Otherwise, $A_{(T_1,\wgt^{n-1})}$ factors through the Jacquet module of $\sigma$ with respect to the standard parabolic subgroup with Levi component $\GL_1\times\SO(2n-1)$.
Hence the character $\inj_{1,1}(t) \mapsto \abs{t}^{-\frac12}$ occurs in the above Jacquet module, in contradiction with the temperedness assumption on $\sigma$.
\end{proof}

It follows from Propositions \ref{prop: theta} and \ref{prop: thetatemp} that the Howe duality defines an injection
\[
 \thetaone_\psi : \Irr_{\gen}\SO(2n+1) \longrightarrow \Irr_{\genpsi{\psi_{\tilde N}^{-1}}}\Mp_n
\]
which restricts to a bijection
\[
 \Thetaone_\psi : \Irr_{*, \gen} \SO(2n+1) \longrightarrow \Irr_{*, \genpsi{\psi_{\tilde N}^{-1}}}\Mp_n
\]
for $* = \temp$ or $\sqr$.
The inverse of $\Thetaone_\psi$ is
\[
 \Thetatwo_\psi : \Irr_{*, \genpsi{\psi_{\tilde N}^{-1}}}\Mp_n \longrightarrow \Irr_{*, \gen} \SO(2n+1).
\]

\begin{proposition}
\label{prop: gamma2}
We have
\[
 \gamma(s, \thetaone_\psi(\sigma) \times \tau, \psi) = \gamma(s, \sigma \times \tau, \psi)
\]
for $\sigma \in \Irr_{\gen}\SO(2n+1)$ and $\tau \in \Irr_{\gen}\GL_m$, $m\ge1$.
\end{proposition}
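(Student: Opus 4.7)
The plan is to reduce the identity to the case where $\sigma$ is square-integrable and then chain together the Jiang--Soudry characterization with Theorem \ref{thm: bijection}. First I would handle the reduction. Any $\sigma \in \Irr_{\gen} \SO(2n+1)$ is the Langlands quotient of (in fact, coincides with) a representation parabolically induced from $\tau_1 \abs{\det}^{s_1} \otimes \cdots \otimes \tau_r \abs{\det}^{s_r} \otimes \sigma_0$, where $\sigma_0 \in \Irr_{\sqr,\gen}\SO(2m+1)$ and $\tau_i \in \Irr_{\sqr}\GL_{n_i}$, $s_i \in \R$. By the multiplicativity of $\gamma$-factors \cite{MR1070599} (and its analogue in \cite{Kaplan}) both sides of the asserted identity decompose into a product of $\gamma(s,\tau_i \times \tau,\psi)$, $\gamma(s,\tau_i^\vee \times \tau,\psi)$ and either $\gamma(s, \sigma_0 \times \tau,\psi)$ or $\gamma(s,\thetaone_\psi(\sigma_0)\times\tau,\psi)$. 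Provided one knows (via Kudla's tower property / a see-saw argument for the dual pair $(\SO(2m+1)\times\SO(2r), \Mp_n)$) that $\thetaone_\psi(\sigma)$ is the corresponding parabolically induced representation on the metaplectic side, the problem reduces to the square-integrable case.

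Second, for $\sigma \in \Irr_{\sqr, \gen}\SO(2n+1)$, set $\pi = \JSlift(\sigma) \in \Irr_{\msqr}\GL_{2n}$. The Jiang--Soudry identity \eqref{eq: gamma} gives
\[
\gamma^{\an}(s, \sigma \times \tau, \psi) = \gamma^{\an}(s, \pi \times \tau, \psi),
\]
while Theorem \ref{thm: bijection} applied to $\des_{\psi^{-1}}(\pi) \in \Irr_{\sqr, \genpsi{\psi_{\tilde N}^{-1}}}\Mp_n$ yields
\[
\gamma^{\an}(s, \des_{\psi^{-1}}(\pi) \times \tau, \psi) = \gamma^{\an}(s, \pi \times \tau, \psi).
\]
Therefore it suffices to prove the identification
\begin{equation*}
 \thetaone_\psi(\sigma) = \des_{\psi^{-1}}(\JSlift(\sigma))
\end{equation*}
as elements of $\Irr_{\sqr, \genpsi{\psi_{\tilde N}^{-1}}}\Mp_n$.

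Third, to establish this identification I would exploit the fact, just proved in Proposition \ref{prop: thetatemp}, that $\Thetaone_\psi$ and $\Thetatwo_\psi$ are mutually inverse bijections between $\Irr_{\sqr, \gen}\SO(2n+1)$ and $\Irr_{\sqr, \genpsi{\psi_{\tilde N}^{-1}}}\Mp_n$. Thus the required equality is equivalent to $\Thetatwo_\psi(\des_{\psi^{-1}}(\JSlift(\sigma))) = \sigma$, i.e., the composition $\Thetatwo_\psi \circ \des_{\psi^{-1}} \circ \JSlift$ is the identity on $\Irr_{\sqr, \gen}\SO(2n+1)$. By construction of the Jiang--Soudry correspondence (which is itself defined through the Ginzburg--Rallis--Soudry descent and the theta lift $\Mp_n \to \SO(2n+1)$), this compatibility should follow essentially by definition, once one matches the choice of additive character used in the two descent constructions. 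If a direct identification is not available in the literature in the precise form needed, an alternative is to globalize: using the result from Appendix \ref{app: globalization}, realize $\sigma$ as a local component of a global generic cuspidal representation of $\SO(2n+1,\A)$, identify the global theta lift to $\Mp_n(\A)$ with a CAP representation whose local components are prescribed by the descent, and then compare local components.

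The main obstacle is the identification $\thetaone_\psi(\sigma) = \des_{\psi^{-1}}(\JSlift(\sigma))$ in the square-integrable case. Although both constructions are morally inverse to the same lifting $\Mp_n \to \GL_{2n}$, matching them precisely at the level of individual representations (including keeping careful track of the Weil representation conventions and the character $\psi_{\tilde N}$ versus its inverse) is the delicate point. The reduction to the square-integrable case, in contrast, should be a routine consequence of multiplicativity of $\gamma$-factors together with the standard compatibility of theta lifting with parabolic induction.
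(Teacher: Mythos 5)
Your argument is circular. The key step you invoke is the $\gamma$-factor identity
$\gamma^{\an}(s, \des_{\psi^{-1}}(\pi) \times \tau, \psi) = \gamma^{\an}(s, \pi \times \tau, \psi)$,
which is \eqref{eq: polek} of Theorem \ref{thm: bijection}. But Theorem \ref{thm: bijection} is not established until the end of \S\ref{s: so}, and its proof there derives \eqref{eq: polek} precisely by combining \eqref{eq: gamma}, Proposition \ref{prop: triangle}, and \emph{Proposition \ref{prop: gamma2} itself}: the authors write $\gamma(s, \pi \times \tau, \psi) = \gamma(s, \sigma \times \tau, \psi) = \gamma(s, \tilde \pi \times \tau, \psi)$ with the first equality being \eqref{eq: gamma} and the second being Proposition \ref{prop: gamma2}. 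So in the paper's logical structure \eqref{eq: polek} is a consequence of the statement you are trying to prove; using it as an input is impermissible without an independent proof, which you do not supply. (Note the authors were explicitly careful about circularity of this kind in the remark following Theorem \ref{thm: main}.) Similarly, Proposition \ref{prop: triangle} is stated and proved \emph{after} Proposition \ref{prop: gamma2}, so one would at minimum need to check that its proof is independent of \ref{prop: gamma2}; the footnote in its proof even says ``as in the proof of Proposition \ref{prop: gamma2}.''

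The paper's actual route is quite different and does not pass through Theorem \ref{thm: bijection} or the Jiang--Soudry lift $\JSlift$ at all. Starting from $\tilde\pi\in\Irr_{\genpsi{\psi_{\tilde N}^{-1}}}\Mp_n$, one uses Kudla's supercuspidal theorem and the multiplicativity of $\gamma$-factors to reduce to the case where $\tilde\pi$ and $\tau$ are \emph{supercuspidal} (not merely square-integrable); one then globalizes $\tilde\pi$ to a cuspidal automorphic representation whose global theta lift to $\SO(2n+1)$ is cuspidal, invokes the known equality of local $\gamma$-factors at all the auxiliary places (via Kudla's results at finite places and Adams--Barbasch at complex places), and extracts the desired equality at the place of interest from the global functional equation. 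Your reduction to the square-integrable case via ``compatibility of theta with parabolic induction'' is in the right spirit but is weaker than what the globalization argument needs, and in any case leaves the crucial square-integrable case resting on circular premises.
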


\begin{proof}
This was stated in \cite[Corollary 11.3]{MR2999299} but the deduction from [ibid., Proposition 11.1] is incorrect since it assumes a theory
of $\gamma$-factors for irreducible but not necessarily generic representations.
The proof can be corrected as follows.

Let $\tilde \pi \in \Irr_{\genpsi{\psi_{\tilde N}^{-1}}}\Mp_n$ and $\tau \in \Irr_{\gen}\GL_m$.
Recall that by Proposition \ref{prop: theta}, $\thetatwo_\psi(\tilde \pi)$ is non-zero.
We need to show that
\[
 \gamma(s, \tilde \pi \times \tau, \psi) = \gamma(s, \sigma \times \tau, \psi)
\]
for the irreducible generic constituent $\sigma$ of $\thetatwo_\psi(\tilde \pi)$ if it exists.
Let $\tau_{1, \psi} \otimes \dots \otimes \tau_{k, \psi} \otimes \tilde \pi_0$ be the supercuspidal support of $\tilde \pi$, where
$\tau_i \in \Irr_{\cusp} \GL_{n_i}$ and $\tilde \pi_0 \in \Irr_{\cusp} \Mp_{n_0}$ with $n_0 = n - n_1 - \dots - n_k$.
Here we define a representation $\tau_{i,\psi}$ of the double cover of $\GL_{n_i}$ (with multiplication law determined by the Hilbert symbol)
by $\tau_{i,\psi}(g, \epsilon) = \epsilon \gamma_{\psi}(\det g) \tau_i(g)$ for $(g, \epsilon) \in \GL_{n_i} \times \{ \pm 1 \}$, where $\gamma_\psi$
is Weil's factor with respect to $\psi$.
Since $\tilde \pi$ is $\psi_{\tilde N}^{-1}$-generic, $\tilde \pi_0$ is also $\psi_{\tilde N}^{-1}$-generic where we regard $\psi_{\tilde N}$ as
a non-degenerate character of the maximal unipotent subgroup of $\Mp_{n_0}$ by restriction.
Let $\sigma_0 = \Thetatwo_\psi(\tilde \pi_0) \in \Irr_{\sqr, \gen} \SO(2n_0+1)$.
By \cite[Theorem 2.2]{MR1983781}, $\sigma_0$ is supercuspidal unless $n_0 = 1$ and $\tilde \pi_0$ is the odd Weil representation of $\Mp_1$
with respect to $\psi$, in which case $\sigma_0$ is the Steinberg representation of $\SO(3)$.
Then by Kudla's supercuspidal theorem \cite{MR818351}, the supercuspidal support of $\sigma$ is of the form
\[
\begin{cases}
 \tau_1 \otimes \dots \otimes \tau_k \otimes \sigma_0 & \text{unless $n_0=1$ and $\tilde \pi_0$ is the odd Weil representation,} \\
 \tau_1 \otimes \dots \otimes \tau_k \otimes \abs{\,\cdot\,}^{\frac12} & \text{if $n_0=1$ and $\tilde \pi_0$ is the odd Weil representation.}
\end{cases}
\]
Hence, by the multiplicativity of $\gamma$-factors \cite{Kaplan}, we may assume that both $\tilde \pi$ and $\tau$ are supercuspidal.
Then $\sigma := \Thetatwo_\psi(\tilde \pi) = \thetatwo_\psi(\tilde \pi)$ is irreducible and generic by Proposition \ref{prop: thetatemp}.

We first assume that $\sigma$ is supercuspidal, which is the case unless $n=1$ and $\tilde \pi$ is the odd Weil representation of $\Mp_1$
with respect to $\psi$ (see \cite[Theorem 2.2]{MR1983781}).
Choose a totally complex number field $k$ such that $k_{v_0} = F$ for some place $v_0$ of $k$.
By replacing $\psi$ by a character in its $(F^*)^2$-orbit if necessary, we may assume that there exists a non-trivial character $\varPsi$ of
$k \bs \A_k$ (where $\A_k$ is the adele ring of $k$) such that $\varPsi_{v_0} = \psi$.
We may also assume that $\varPsi_v$ is trivial on $\mathcal{O}_v$ for all finite places $v$ of $k$.
We claim that there exists an irreducible globally $\varPsi_{\tilde N}^{-1}$-generic cuspidal automorphic representation $\tilde\Pi$ of
$\Mp_n(\A_k)$ such that $\tilde\Pi_{v_0} = \tilde\pi$, and $\tilde\Pi_v$ is a subquotient of a principal series representation of
$\Mp_n(k_v)$ for any other place $v \ne v_0$.

The argument is standard (cf.~\cite{MR1904086, MR2369492}).
We choose $f_v\in C_c^\infty(\tilde N(k_v)\bs\tilde G(k_v),\varPsi_{\tilde N,v}^{-1})$ of the form $f_v=\int_{N'(k_v)} h_v(u\,\cdot\,)
\varPsi_{\tilde N,v}(u)\, du$, where $h_v$ is a genuine function in $C_c^\infty(\tilde G(k_v))$ as follows.
At $v_0$ we take $h_{v_0}$ to be a matrix coefficient of $\tilde\pi$ such that $f_v(e)=1$
(cf.~\cite[Lemma 4.4]{MR2369492}).
At the finite places $v\ne v_0$ of odd residual characteristic we take $h_v$ to be the function defined for $g \in \Mp_n(k_v)$ by
\[
 h_v(g) =
 \begin{cases}
  \epsilon & \text{if $g \in \epsilon \cdot \Sp_n(\mathcal{O}_v)$ with $\epsilon \in \{ \pm 1 \}$,} \\
  0 & \text{otherwise,}
 \end{cases}
\]
where $\Sp_n(\mathcal{O}_v)$ is regarded as a subgroup of $\Mp_n(k_v)$ and $\{ \pm 1 \}$ is the kernel of the projection
$\Mp_n(k_v) \rightarrow \Sp_n(k_v)$.
At the finite places $v \ne v_0$ of even residual characteristic we take $h_v$ in the part of $C_c^\infty(\tilde G(k_v))$
corresponding to the Bernstein component of a principal series representation, such that $f_v(e)=1$.
(Once again, \cite[Lemma 4.4]{MR2369492} guarantees that such an $h_v$ exists since any non-zero $h_v$ in the Bernstein component
acts non-trivially on some irreducible principal series representation.)
At the archimedean places $v$ we take $h_v$ to be supported in a small neighborhood of $e$ and such that $f_v(e)=1$.
Let $h=\otimes_v h_v$ and $f=\otimes_v f_v$ so that $f=\int_{N'(\A_k)}h(u \, \cdot \,)\varPsi_{\tilde N}(u)\, du$.
Let $K(g)=\sum_{\gamma\in N'(k)\bs G'(k)}f(\gamma g)$.
Then as in the proof of \cite[Theorem 4.1]{MR2369492}, $K$ is cuspidal, $K(e)=f(e)\ne0$, and any irreducible representation occurring in its spectral
decomposition satisfies the required conditions.

Let $\Sigma$ be the global theta $\varPsi$-lift of $\tilde\Pi$ to $\SO(2n+1,\A_k)$.
By \cite[Proposition 3]{MR1353315}, $\Sigma$ is non-zero and globally generic.
Since we have assumed that $\sigma$ is supercuspidal, $\Sigma$ is cuspidal.
By \cite[Theorem 1.3]{MR2330445}, $\Sigma$ is irreducible.\footnote{The irreducibility of $\Sigma$ also follows from the Howe duality (cf.~\cite[Corollary 7.1.3]{MR1289491}).}
Similarly, we can find an irreducible cuspidal automorphic representation $\Tau$ of $\GL_m(\A_k)$ such that $\Tau_{v_0} = \tau$,
and $\Tau_v$ is a subquotient of a principal series representation of $\GL_m(k_v)$ for any other place $v \ne v_0$.
Now, by Kudla's results on the theta correspondence \cite{MR818351} and once again, the multiplicativity of $\gamma$-factors \cite{Kaplan}, we have
\begin{equation}
\label{eq: gamma3}
 \gamma(s, \tilde\Pi_v \times \Tau_v, \varPsi_v) = \gamma(s, \Sigma_v \times \Tau_v, \varPsi_v)
\end{equation}
for any finite place $v \ne v_0$.
The same holds for any complex place $v$ by \cite{MR1346217}.
Hence we conclude from the global functional equation that \eqref{eq: gamma3} holds for $\tilde\Pi_{v_0} = \tilde\pi$ and $\Tau_{v_0} = \tau$ as well.

If $\sigma$ is not supercuspidal, then in the above globalization,
we choose an auxiliary finite place $v_1 \ne v_0$ and $\sigma_1 \in \Irr_{\cusp, \gen} \SO(2n+1,k_{v_1})$.
As before, there exists an irreducible globally $\varPsi_{\tilde N}^{-1}$-generic cuspidal automorphic representation $\tilde\Pi$ of $\Mp_n(\A_k)$
such that $\tilde\Pi_{v_0} = \tilde\pi$, $\tilde\Pi_{v_1} = \Thetaone_{\varPsi_{v_1}}(\sigma_1)$, and $\tilde\Pi_v$ is a subquotient
of a principal series representation of $\Mp_n(k_v)$ for any other place $v \ne v_0, v_1$.
Then the rest of the argument is the same, since we have already shown \eqref{eq: gamma3} for $\tilde\Pi_{v_1}$.
\end{proof}

\begin{proposition}
\label{prop: triangle}
For any $\sigma\in\Irr_{\sqr,\gen} \SO(2n+1)$ we have $\Thetaone_\psi(\sigma) = \des_{\psi^{-1}}(\JSlift(\sigma))$.
\end{proposition}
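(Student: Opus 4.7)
The plan is to verify first that both sides belong to $\Irr_{\sqr, \genpsi{\psi_{\tilde N}^{-1}}}\Mp_n$, which is immediate from Proposition \ref{prop: thetatemp} and Theorem \ref{thm: bijection}, and then to use the bijectivity of the descent map in order to reduce the identity to a matching statement on $\Irr_{\msqr}\GL_{2n}$. Since $\des_{\psi^{-1}}$ is a bijection, there is a unique $\pi_0 \in \Irr_{\msqr}\GL_{2n}$ with $\des_{\psi^{-1}}(\pi_0) = \Thetaone_\psi(\sigma)$, and the claim becomes $\pi_0 = \JSlift(\sigma)$.

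To identify $\pi_0$ I would chain three $\gamma$-factor identities. For every $\tau \in \Irr_{\gen}\GL_m$ with $m \ge 1$, Proposition \ref{prop: gamma2} yields
\[
\gamma(s, \Thetaone_\psi(\sigma) \times \tau, \psi) = \gamma(s, \sigma \times \tau, \psi),
\]
the characterizing property \eqref{eq: gamma} of $\JSlift$ yields $\gamma(s, \sigma \times \tau, \psi) = \gamma(s, \JSlift(\sigma) \times \tau, \psi)$, and the descent compatibility \eqref{eq: polek} applied to $\pi_0$ yields $\gamma(s, \Thetaone_\psi(\sigma) \times \tau, \psi) = \gamma(s, \pi_0 \times \tau, \psi)$. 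Combining the three equalities gives
\[
\gamma(s, \pi_0 \times \tau, \psi) = \gamma(s, \JSlift(\sigma) \times \tau, \psi)
\]
for every such $\tau$.

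The remaining step is a rigidity statement: two representations in $\Irr_{\msqr}\GL_{2n}$ with identical twisted $\gamma$-factors against every generic $\tau$ must coincide. Using the decomposition $\pi = \pi_1 \times \dots \times \pi_k$ from \eqref{eq: pi} together with the multiplicativity of $\gamma$-factors, this reduces to recovering each irreducible square-integrable $\pi_i$ from its twisted $\gamma$-factors, which is supplied by the local converse theorem for $\GL$ (Henniart; Jacquet--Piatetski-Shapiro--Shalika). Applying this to $\pi_0$ and $\JSlift(\sigma)$ yields $\pi_0 = \JSlift(\sigma)$, completing the proof. The main delicacy is confirming that this rigidity for msqr representations follows in the precise form required from the $\GL_m$ local converse theorem; alternatively, one can phrase the uniqueness directly on the $\Mp_n$ side by transporting it across the bijection $\des_{\psi^{-1}}$ via \eqref{eq: polek}, in which case no auxiliary statement is needed beyond what is already in Theorem \ref{thm: bijection}.
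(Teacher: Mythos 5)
Your argument is circular. Both of the inputs you lean on most heavily --- the bijectivity of $\des_{\psi^{-1}}:\Irr_{\msqr}\GL_{2n}\to\Irr_{\sqr,\genpsi{\psi_{\tilde N}^{-1}}}\Mp_n$ and the descent $\gamma$-factor compatibility \eqref{eq: polek} --- are precisely the content of Theorem \ref{thm: bijection}, and Theorem \ref{thm: bijection} is deduced in the paper \emph{from} Proposition \ref{prop: triangle}, not the other way around. At the point where Proposition \ref{prop: triangle} is to be proved, all that is known about $\des_{\psi^{-1}}(\pi)$ for $\pi\in\Irr_{\msqr}\GL_{2n}$ is that it is nonzero (by \cite[p.~860, Theorem]{MR1671452}); neither irreducibility nor square-integrability nor genericity nor the existence of a $\pi_0$ with $\des_{\psi^{-1}}(\pi_0)=\Thetaone_\psi(\sigma)$ has yet been established, and the identity $\gamma(s,\des_{\psi^{-1}}(\pi)\times\tau,\psi)=\gamma(s,\pi\times\tau,\psi)$ is exactly what the commutative triangle of Proposition \ref{prop: triangle} combined with \eqref{eq: gamma} and Proposition \ref{prop: gamma2} yields. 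So invoking the bijectivity and \eqref{eq: polek} here short-circuits the whole logical structure. The same objection applies to the ``alternative'' you mention at the end: transporting uniqueness across $\des_{\psi^{-1}}$ via \eqref{eq: polek} presupposes precisely those two ingredients.

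The actual proof in the paper avoids this by a globalization argument. Starting from $\tilde\pi=\Thetaone_\psi(\sigma)$, one uses Corollary \ref{lem: globalize2} to place $\tilde\pi$ as a local component of a globally $\varPsi_{\tilde N}^{-1}$-generic cuspidal $\tilde\Pi$ on $\Mp_n(\A_k)$ whose global theta lift $\Sigma$ to $\SO(2n+1,\A_k)$ is nonzero, irreducible, globally generic and cuspidal. Then one shows $L^S(\tfrac12,\Sigma)\ne0$ (via the tower property/theta lift back to $\Mp_n$ and \cite[Theorem 1.4]{MR2330445}), identifies the weak lift $\Pi$ of $\Sigma$ to $\GL_{2n}(\A_k)$ with $\Pi_{v_0}=\JSlift(\sigma)$ by \cite[Theorem E]{MR2058617}, shows the global descent $\des_{\varPsi^{-1}}(\Pi)$ is irreducible and equal to $\tilde\Pi$ (via \cite{MR3009748} and \cite[Theorem 11.2]{MR2848523}), and finally invokes the local-global compatibility of the descent \cite[Theorem 6.2]{1401.0198} to conclude $\des_{\psi^{-1}}(\JSlift(\sigma))=\tilde\Pi_{v_0}=\tilde\pi$. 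None of this uses Theorem \ref{thm: bijection}. If you want a purely local route, you would need to \emph{prove} the $\gamma$-factor identity for the descent from scratch, not cite \eqref{eq: polek}; as things stand the paper only obtains that identity as a corollary of Proposition \ref{prop: triangle}.
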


\begin{proof}
For $\sigma\in\Irr_{\sqr,\gen}\SO(2n+1)$, set $\tilde\pi=\Thetaone_\psi(\sigma)$ and $\pi=\JSlift(\sigma)$.
Let $k$ be a number field such that $k_{v_0}=F$ for some place $v_0$ of $k$.
By replacing $\psi$ by a character in its $(F^*)^2$-orbit if necessary, we may assume that there exists a non-trivial character $\varPsi$ of
$k\bs\A_k$ such that $\varPsi_{v_0}=\psi$.
By Corollary \ref{lem: globalize2}, there exists an irreducible globally $\varPsi_{\tilde N}^{-1}$-generic cuspidal
automorphic representation $\tilde\Pi$ of $\Mp_n(\A_k)$ such that $\tilde\Pi_{v_0}=\tilde\pi$ and the global theta $\varPsi$-lift $\Sigma$ of $\tilde\Pi$ to
$\SO(2n+1,\A_k)$ is non-zero, irreducible, globally generic and cuspidal.
By \cite[Theorem 1.1]{MR2330445} the global theta $\varPsi$-lift of $\Sigma$ to $\Mp_n(\A_k)$ is equal to $\tilde\Pi$ and in particular it is non-zero.
Therefore by \cite[Theorem 1.4]{MR2330445} $L^S(\frac12,\Sigma)\ne0$ for a sufficiently large finite set $S$ of places of $k$.\footnote{Alternatively,
this also follows from the Rallis inner product formula, basic properties of the local standard $L$-factors \cite{Yam}
and the description of the generic unitary dual \cite{MR2046512}.}
Let $\Pi$ be the weak lift of $\Sigma$ to $\GL_{2n}(\A_k)$.
Then $L^S(\frac12,\Pi)=L^S(\frac12,\Sigma)\ne0$ and $\Pi_{v_0}=\pi$ by \cite[Theorem E]{MR2058617}.
Moreover, the global descent $\des_{\varPsi^{-1}}(\Pi)$ of $\Pi$ is irreducible by \cite{MR3009748}\footnote{The irreducibility of
$\des_{\varPsi^{-1}}(\Pi)$ can also be proved as follows.
By \cite[Theorem 1.7]{MR1954940}, $\des_{\varPsi^{-1}}(\Pi)$ is a multiplicity-free direct sum of irreducible globally
$\varPsi^{-1}_{\tilde N}$-generic cuspidal automorphic representations of $\Mp_n(\A_k)$.
Take any irreducible constituent $\tilde \Pi$ of $\des_{\varPsi^{-1}}(\Pi)$ and consider its global theta $\varPsi$-lift $\Sigma$ to $\SO(2n+1, \A_k)$.
By the local unramified theta correspondence, we see that the global theta $\varPsi$-lift of $\tilde \Pi$ to $\SO(2n-1, \A_k)$ vanishes, so that $\Sigma$ is cuspidal.
As in the proof of Proposition \ref{prop: gamma2}, $\Sigma$ is non-zero, irreducible and globally generic.
Then $\Pi$ is a weak lift of $\Sigma$.
By \cite[Theorem E]{MR2058617}, $\Pi_v = \JSlift(\Sigma_v)$ for all places $v$.
Since $\JSlift$ is injective, the equivalence class of $\Sigma$ is uniquely determined by $\Pi$.
So is $\tilde \Pi$ by the Howe duality.
Thus $\des_{\varPsi^{-1}}(\Pi)$ must be irreducible.}
and equivalent to $\tilde\Pi$ by \cite[Theorem 11.2]{MR2848523}.
It follows from \cite[Theorem 6.2]{1401.0198} that $\des_{\psi^{-1}}(\Pi_{v_0})$ is irreducible and equals to
$\des_{\varPsi^{-1}}(\Pi)_{v_0}$. The proposition follows.
\end{proof}

We now prove Theorem \ref{thm: bijection}.
As a consequence of Proposition \ref{prop: triangle}, we obtain a commutative diagram
\[
 \xymatrix{
 & \Irr_{\msqr} \GL_{2n} \ar@{<-}[ddl]_{\JSlift} \ar@{->}[ddr]^{\des_{\psi^{-1}}} & \\
 & & \\
 \Irr_{\sqr,\gen} \SO(2n+1) \ar@<2pt>[rr]^{\Thetaone_\psi} & & \Irr_{\sqr, \genpsi{\psi_{\tilde N}^{-1}}}\Mp_n \ar@<2pt>[ll]^{\Thetatwo_\psi}
 }
\]
%\begin{tikzcd}[column sep=tiny, row sep=huge]
% {} & \Irr_{\msqr} \GL_{2n} \arrow{dr}{\des_{\psi^{-1}}}  & {} \\
% \Irr_{\sqr,\gen} \SO(2n+1) \arrow{ur}{\JSlift} \arrow[shift left=2]{rr}{\Thetaone_\psi} \arrow[leftarrow, shift right=2]{rr}[swap]{\Thetatwo_\psi} & {} & \Irr_{\sqr, \genpsi{\psi_{\tilde N}^{-1}}}\Mp_n
%\end{tikzcd}
where in the diagram, $\JSlift$ and $\Thetaone_\psi$ are bijective and $\Thetatwo_\psi$ is the inverse of $\Thetaone_\psi$.
Hence $\des_{\psi^{-1}}$ in the diagram is also bijective.
For $\pi \in \Irr_{\msqr} \GL_{2n}$, let $\tilde \pi = \des_{\psi^{-1}}(\pi)$ and $\sigma = \Thetatwo_\psi(\tilde \pi)$, so that $\pi = \JSlift(\sigma)$
and $\tilde \pi = \Thetaone_\psi(\sigma)$.
Then by \eqref{eq: gamma} and Proposition \ref{prop: gamma2}, we have
\[
 \gamma(s, \pi \times \tau, \psi) = \gamma(s, \sigma \times \tau, \psi) = \gamma(s, \tilde \pi \times \tau, \psi)
\]
for any $\tau \in \Irr_{\gen}\GL_m$, $m\ge1$.
This completes the proof of Theorem \ref{thm: bijection}.

Finally, we prove the formal degree conjecture for $\SO(2n+1)$ in the generic case.
We write $d_\psi^{\SO(2n+1)}$ for the Haar measure on $\SO(2n+1)$ defined in \cite{MR1458303,MR2425185}.
(We caution that unlike the case of symplectic groups, this does not coincide with our standard convention for Haar measures on subgroups of the general linear group.)

\begin{theorem}
\label{thm: so}
Let $\sigma\in\Irr_{\sqr,\gen}\SO(2n+1)$ and let $d_\sigma$ be its formal degree.
Assume that $\pi = \JSlift(\sigma)$ is of the form \eqref{eq: pi}.
Then
\[
 d_\psi^{\SO(2n+1)} = 2^k \gamma^{\an}(1, \pi, \Sym^2, \psi)d_\sigma = 2^k \gamma^{\ari}(1, \pi, \Sym^2, \psi)d_\sigma.
\]
\end{theorem}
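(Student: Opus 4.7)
The plan is to transfer the metaplectic formula of Theorem~\ref{thm: Mpn} to $\SO(2n+1)$ via the theta correspondence. Given $\sigma\in\Irr_{\sqr,\gen}\SO(2n+1)$, I would first set $\tilde\pi = \Thetaone_\psi(\sigma)$; by Proposition~\ref{prop: thetatemp} this lies in $\Irr_{\sqr,\genpsi{\psi_{\tilde N}^{-1}}}\Mp_n$, and by Proposition~\ref{prop: triangle} it coincides with $\des_{\psi^{-1}}(\pi)$, where $\pi = \JSlift(\sigma)$. Since $\pi$ is assumed to be of the form \eqref{eq: pi}, Theorem~\ref{thm: Mpn} applies to $\tilde\pi$ and gives
\[
d_\psi^{\Sp_n}=\abs{2}^n\,2^k\,\gamma^{\an}(1,\pi,\Sym^2,\psi)\,d_{\tilde\pi}.
\]

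Next I would appeal to the compatibility of formal degrees under the local theta correspondence between $\Mp_n$ and $\SO(2n+1)$, which is essentially the content of \cite[\S 14]{MR3166215}, with the Haar-measure bookkeeping made explicit. Concretely, matrix coefficients of $\sigma$ and $\tilde\pi$ correspond under the Weil representation $\Omega_\psi$, and a standard Rallis-inner-product-type computation yields the proportionality
\[
\frac{d_\sigma}{d_\psi^{\SO(2n+1)}}=\abs{2}^n\cdot\frac{d_{\tilde\pi}}{d_\psi^{\Sp_n}}.
\]
The factor $\abs{2}^n$ reflects the difference between the convention for the Haar measure on $\SO(2n+1)$ (following \cite{MR1458303,MR2425185}, using a Chevalley basis) and the matrix-embedding convention used for $\Sp_n$; it is exactly the factor that was overlooked in the $\Mp_n$-formulation of the formal degree conjecture in \cite{MR3166215} (cf.\ the remark following Theorem~\ref{thm: Mpn}).

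Combining the two displays gives
\[
d_\psi^{\SO(2n+1)}=2^k\,\gamma^{\an}(1,\pi,\Sym^2,\psi)\,d_\sigma,
\]
which is the first asserted equality. For the equality with $\gamma^{\ari}$ in place of $\gamma^{\an}$, I would reproduce verbatim the opening paragraph of the proof of Theorem~\ref{thm: Mpn}: by Henniart's comparison theorem \cite{MR2595008} the two $\gamma$-factors differ by a root of unity $\alpha$, while the positivity of $\gamma^{\ari}(1,\pi,\Sym^2,\psi)$ forces $\alpha=1$.

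The main obstacle, given Theorem~\ref{thm: Mpn} and the theta-correspondence machinery of \S\ref{s: so}, is the careful tracking of Haar-measure conventions across the theta correspondence so as to produce the correct factor of $\abs{2}^n$; all the representation-theoretic input has already been packaged in Propositions~\ref{prop: thetatemp}, \ref{prop: gamma2}, and \ref{prop: triangle}, so once the measure comparison is in hand the two formulas concatenate to give the theorem without further analytic input.
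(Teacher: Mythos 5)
Your overall strategy matches the paper's: pass from $\sigma$ to $\tilde\pi = \Thetaone_\psi(\sigma) = \des_{\psi^{-1}}(\pi)$ via Propositions~\ref{prop: thetatemp} and \ref{prop: triangle}, apply Theorem~\ref{thm: Mpn}, and combine with a proportionality between $d_\sigma/d_\psi^{\SO(2n+1)}$ and $d_{\tilde\pi}/d_\psi^{\Sp_n}$. The reduction of $\gamma^{\an}$ to $\gamma^{\ari}$ via Henniart's theorem and positivity also appears correctly.

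The gap is in the middle step, where you assert that a ``standard Rallis-inner-product-type computation'' directly yields the exact constant
\[
\frac{d_\sigma}{d_\psi^{\SO(2n+1)}}=\abs{2}^n\cdot\frac{d_{\tilde\pi}}{d_\psi^{\Sp_n}}.
\]
This is precisely the computation the paper does \emph{not} carry out, and asserting that ``careful tracking of Haar-measure conventions'' produces $\abs{2}^n$ is not a proof. What \cite[Theorem 15.1]{MR3166215} actually supplies is only that $d_{\tilde\pi}/d_\psi^{\Sp_n} = c\cdot d_\sigma/d_\psi^{\SO(2n+1)}$ for some constant $c>0$ independent of $\sigma$, with $c=1$ established there for $p\ne 2$; for $p=2$ the value of $c$ is left undetermined, which is exactly when $\abs{2}^n\ne 1$ and the issue bites. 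Your claim that the $\abs{2}^n$ ``is exactly the factor that was overlooked in the $\Mp_n$-formulation'' is an identification you would need to prove, not observe; as stated it borders on circularity, since the only evidence you have that the two overlooked factors cancel is the theorem you are trying to establish. The paper resolves this without any direct measure computation at $p=2$: it substitutes into Theorem~\ref{thm: Mpn} to get $d_\psi^{\SO(2n+1)} = \abs{2}^n 2^k c\,\gamma^{\an}(1,\pi,\Sym^2,\psi)\,d_\sigma$, then determines $c$ by invoking the already-known formal degree formula for the Steinberg representation of $\SO(2n+1)$ from \cite{MR2425185}, which forces $c=\abs{2}^{-n}$. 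This Steinberg bootstrap is the missing ingredient your proposal needs; without it, or an actual carried-out measure comparison at residue characteristic two, the argument is incomplete.
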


\begin{proof}
Set $\tilde\pi:= \Thetaone_{\psi}(\sigma) = \des_{\psi^{-1}}(\pi)$.
By \cite[Theorem 15.1]{MR3166215}, we have
\[
 d_{\tilde{\pi}}/d_\psi^{\Sp_n} = c\cdot d_{\sigma}/d_\psi^{\SO(2n+1)}
\]
for some $c>0$ which does not depend on $\sigma$.
(In fact, it is proved that $c=1$ for $p \ne 2$.
For $p=2$, see the remark after [loc.~cit.].)
Thus it follows from Theorem \ref{thm: Mpn} that
\[
 d_\psi^{\SO(2n+1)} = \abs{2}^n 2^k c \gamma^{\an}(1, \pi, \Sym^2, \psi)d_\sigma = \abs{2}^n 2^k c \gamma^{\ari}(1, \pi, \Sym^2, \psi)d_\sigma.
\]
On the other hand, the theorem was already proved in \cite{MR2425185} for the Steinberg representation of $\SO(2n+1)$.
This forces $c=\abs{2}^{-n}$ and completes the proof.
\end{proof}

\section{Odd special orthogonal groups --- the general case} \label{sec: odd general}

We now prove the formal degree conjecture for $\SO(2n+1)$, under the assumption of the local Langlands correspondence,
which was established by Arthur \cite{MR3135650} conditionally on the stabilization of the twisted trace formula.
The local Langlands correspondence (in the square-integrable case) asserts that there exists a partition
\[
 \Irr_{\sqr} \SO(2n+1) = \coprod_{\phi} \Pi_\phi
\]
into $L$-packets, where the disjoint union on the right-hand side runs over equivalence classes of square-integrable $L$-parameters
$\phi : \WD_F \rightarrow \Sp_n(\C)$.
Here we say that a continuous homomorphism $\phi : \WD_F \rightarrow \Sp_n(\C)$ is an $L$-parameter if $\phi$ is semisimple and
$\phi\rest_{\SL(2,\C)}$ is algebraic, and that $\phi$ is square-integrable if the centralizer $S_\phi$ of the image of $\phi$ in $\Sp_n(\C)$ is finite.
Moreover, there exists a bijection
\[
 \Pi_\phi \longrightarrow \widehat{\mathcal{S}}_{\phi},
\]
where $\mathcal{S}_\phi = S_\phi / \{ \pm I_{2n}\}$ and $\widehat{\mathcal{S}}_{\phi}$ is the group of characters of $\mathcal{S}_\phi$.
Note that $S_\phi$ is an elementary abelian $2$-group.

The above bijection satisfies the following properties.
We write $\sprod{\cdot}{\sigma}$ for the character of $\mathcal{S}_\phi$ associated to $\sigma \in \Pi_\phi$.
For $s \in \mathcal{S}_\phi$, set
\[
 \Theta_{\phi}^s = \sum_{\sigma \in \Pi_\phi} \sprod{s}{\sigma} \Theta_{\sigma},
\]
where $\Theta_{\sigma}$ is the character of $\sigma$.
We may assume that $\phi = \xi \circ \phi'$, where $\phi' : \WD_F \rightarrow \Sp_a(\C) \times \Sp_b(\C)$ is a square-integrable $L$-parameter
and $\xi : \Sp_a(\C) \times \Sp_b(\C) \hookrightarrow \Sp_n(\C)$ is an embedding with $a+b=n$, and that $s$ is the image of
$(I_{2a}, -I_{2b}) \in \Sp_a(\C) \times \Sp_b(\C)$ in $\mathcal{S}_\phi$.
Set
\[
 \Theta^{\mathrm{st}}_{\phi'} = \sum_{\sigma' \in \Pi_{\phi'}} \Theta_{\sigma'},
\]
where $\Pi_{\phi'} \subset \Irr_{\sqr}(\SO(2a+1) \times \SO(2b+1))$ is the $L$-packet associated to $\phi'$.
Then $\Theta^{\mathrm{st}}_{\phi'}$ is a stable distribution and $\Theta_{\phi}^s$ is the transfer of $\Theta^{\mathrm{st}}_{\phi'}$, i.e.,
\begin{equation}
\label{eq: transfer}
 \Theta_\phi^s(f) = \Theta^{\mathrm{st}}_{\phi'}(f')
\end{equation}
for $f \in C_c^\infty(\SO(2n+1))$ and $f' \in C_c^\infty(\SO(2a+1) \times \SO(2b+1))$ which have matching orbital integrals
(see \cite[\S 1.4]{MR909227}, \cite[\S 5.5]{MR1687096}; see also \cite[\S 5.3]{MR1687096}, \cite[\S 2.1]{MR3135650} for the normalization of transfer factors).

\begin{corollary}
\label{cor: nongen}
If we admit the local Langlands correspondence for $\SO(2n+1)$, then the formal degree conjecture holds for $\SO(2n+1)$.
Namely, we have
\[
 d_\psi^{\SO(2n+1)} = \abs{S_\phi} \gamma^{\ari}(1, \sigma, \Ad, \psi)d_{\sigma}
\]
for any square-integrable $L$-parameter $\phi: \WD_F \rightarrow \Sp_n(\C)$ and any $\sigma \in \Pi_\phi$.
\end{corollary}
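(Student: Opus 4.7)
The plan is to reduce Corollary~\ref{cor: nongen} to Theorem~\ref{thm: so} by combining three ingredients: the existence of a generic member in each square-integrable $L$-packet of the quasi-split adjoint group $\SO(2n+1)$; a matching of the constants between the two formulations; and constancy of the formal degree along each $L$-packet. I would organize the argument as follows.

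Fix a square-integrable $L$-parameter $\phi:\WD_F\to \Sp_n(\C)$ and decompose $\phi=\bigoplus_{i=1}^k \phi_i$ into pairwise inequivalent irreducible symplectic summands with $\dim \phi_i=2n_i$. The generic-in-packet property, which for these groups is built into the Jiang--Soudry classification, provides a (unique) generic member $\sigma_0\in\Pi_\phi$, and $\pi:=\JSlift(\sigma_0)$ lies in $\Irr_{\msqr}\GL_{2n}$ and has the form \eqref{eq: pi}, where $\pi_i$ corresponds to $\phi_i$ under the local Langlands correspondence for $\GL_{2n_i}$. Two elementary identifications then follow: first, the centralizer $S_\phi\subset \Sp_n(\C)$ equals $\prod_i\{\pm I_{2n_i}\}$, so $\abs{S_\phi}=2^k$; second, the adjoint representation of $\Sp_n(\C)$ on $\mathfrak{sp}_n(\C)$ is the symmetric square of the standard representation, so that $\gamma^{\ari}(s,\sigma_0,\Ad,\psi)=\gamma^{\ari}(s,\pi,\Sym^2,\psi)$. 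Applying Theorem~\ref{thm: so} to $\sigma_0$ yields the desired identity for the generic member:
\[
d_\psi^{\SO(2n+1)}=\abs{S_\phi}\gamma^{\ari}(1,\sigma_0,\Ad,\psi)d_{\sigma_0}.
\]

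It remains to promote this to every $\sigma\in\Pi_\phi$: since $\abs{S_\phi}$ and $\gamma^{\ari}(1,\sigma,\Ad,\psi)$ depend only on $\phi$, the corollary reduces to the assertion that the formal degree is constant on $\Pi_\phi$. This is the main obstacle. Following \cite[\S 9]{MR1070599}, the strategy is to exploit the endoscopic character relations \eqref{eq: transfer}. Concretely, one picks pseudocoefficients $f_\sigma$ of the members of $\Pi_\phi$ (normalized so that $f_\sigma(e)$ recovers $d_\sigma$ up to a fixed constant) and forms the combinations $f^s:=\sum_{\sigma\in \Pi_\phi}\sprod{s}{\sigma}f_\sigma$ for $s\in\mathcal{S}_\phi$, whose orbital integrals transfer to matching functions on the elliptic endoscopic groups $\SO(2a+1)\times\SO(2b+1)$. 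Pairing \eqref{eq: transfer} with these functions and using the Plancherel formula together with induction on $n$ to control the stable discrete series characters on the endoscopic groups, the orthogonality of the characters of the abelian group $\mathcal{S}_\phi$ forces $\sum_\sigma\sprod{s}{\sigma}d_\sigma$ to vanish for $s\ne 1$, i.e.\ $d_\sigma$ is independent of $\sigma\in\Pi_\phi$. Combining this with the case $\sigma=\sigma_0$ completes the proof. I anticipate that the chief technical burden lies in this endoscopic step, which is essentially Shahidi's classical argument adapted to Arthur's normalization of transfer; the prerequisites are precisely what is subsumed under the assumed local Langlands correspondence for $\SO(2n+1)$.
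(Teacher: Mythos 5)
Your overall strategy --- reduce to the generic member of $\Pi_\phi$ by first proving constancy of the formal degree on the $L$-packet, then match the constants $\abs{S_\phi}=2^k$ and $\gamma^{\ari}(1,\sigma,\Ad,\psi)=\gamma^{\ari}(1,\pi,\Sym^2,\psi)$, and finally invoke Theorem~\ref{thm: so} --- is precisely the shape of the paper's argument, and the constancy step (your re-derivation of $\sum_{\sigma}\sprod{s}{\sigma}d_\sigma=0$ for non-trivial $s$ via the transfer \eqref{eq: transfer} of stable characters and pseudocoefficients) is exactly what the paper outsources to \cite[Corollary 9.10]{MR1070599}. The identifications $\abs{S_\phi}=2^k$ and $\Ad\cong\Sym^2\circ\,\iota$ are also correct and match the paper.

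The genuine gap is in the sentence where you assert that the generic member $\sigma_0\in\Pi_\phi$ has $\pi:=\JSlift(\sigma_0)$ ``of the form \eqref{eq: pi}, where $\pi_i$ corresponds to $\phi_i$ under the local Langlands correspondence for $\GL_{2n_i}$,'' crediting this to the Jiang--Soudry classification. There are two problems. First, the existence of a unique generic member and its characterization as the element of $\Pi_\phi$ corresponding to the trivial character of $\mathcal{S}_\phi$ is \emph{not} part of the Jiang--Soudry theory (which concerns only generic representations and has no packet structure); it is Arthur's \cite[Proposition 8.3.2]{MR3135650}. Second, and more seriously, you need to know that the Jiang--Soudry lift $\JSlift(\sigma_0)$, which is defined independently of Arthur via Rankin--Selberg $\gamma$-factors, agrees with the representation of $\GL_{2n}$ attached to $\iota\circ\phi$ by the LLC for $\GL_{2n}$. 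This compatibility between $\JSlift$ and Arthur's parametrization is not formal and is exactly what allows the reduction to Theorem~\ref{thm: so}. The paper establishes it by unwinding the globalization inside Arthur's proof of Proposition~8.3.2 --- producing a globally generic cuspidal $\Sigma$ on $\SO(2n+1,\A_k)$ with $\Sigma_{v_0}=\sigma_0$ weakly lifting to an automorphic $\Pi$ with $\Pi_{v_0}$ attached to $\iota\circ\phi$ --- and then appealing to \cite[Theorem E]{MR2058617} to identify $\Pi_{v_0}=\JSlift(\Sigma_{v_0})$. Without some argument of this kind your proof merely moves the formal degree conjecture for $\SO(2n+1)$ to an equivalent unproved compatibility statement, so you should supply this step explicitly.
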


\begin{proof}
Let $\phi: \WD_F \rightarrow \Sp_n(\C)$ be a square-integrable $L$-parameter.
It follows from \eqref{eq: transfer} and \cite[Corollary 9.10]{MR1070599} that
\[
 \sum_{\sigma \in \Pi_\phi} \sprod{s}{\sigma} d_\sigma = 0
\]
if $s \in \mathcal{S}_\phi$ is non-trivial.
Thus the representations in $\Pi_\phi$ have the same formal degree.
Hence, to prove the corollary, we may assume that the character $\sprod{\cdot}{\sigma}$ is trivial.
Then by \cite[Proposition 8.3.2]{MR3135650}, $\sigma$ is generic.
In fact, as explained in the proof of [loc.~cit.],
there exist a number field $k$, a place $v_0$ of $k$,
an automorphic representation $\Pi$ of $\GL_{2n}(\A_k)$,
and an irreducible globally generic cuspidal automorphic representation $\Sigma$ of $\SO(2n+1,\A_k)$ such that
\begin{itemize}
\item $k_{v_0} = F$,
\item $\Pi_{v_0}$ corresponds to $\iota \circ \phi$,
where $\iota : \Sp_n(\C) \hookrightarrow \GL_{2n}(\C)$ is a natural embedding,
\item $\Sigma$ lifts weakly to $\Pi$,
\item $\Sigma_{v_0} \in \Pi_{\phi}$ and $\sprod{\cdot}{\Sigma_{v_0}}$ is trivial, i.e., $\Sigma_{v_0} = \sigma$.
\end{itemize}
On the other hand, by \cite[Theorem E]{MR2058617},
\[
 \Pi_{v_0} = \JSlift(\Sigma_{v_0}).
\]
Hence $\JSlift(\sigma)$ corresponds to  $\iota \circ \phi$.
This reduces the corollary to Theorem \ref{thm: so}.
\end{proof}

Now let $\SO(2n+1)^-$ be the non-split special orthogonal group in $2n+1$ variables.
We will explain how to prove the formal degree conjecture for $\SO(2n+1)^-$ if we admit the local Langlands correspondence (cf.~\cite[\S 4.2]{WaldAst3472}).

For the sake of uniform notation, we write $\SO(2n+1)^+$ for the split special orthogonal group in $2n+1$ variables.
The local Langlands correspondence (in the square-integrable case) asserts that there exist a partition
\[
 \Irr_{\sqr} \SO(2n+1)^+ \, {\textstyle\coprod} \, \Irr_{\sqr} \SO(2n+1)^- = \coprod_{\phi} \Pi_\phi
\]
and a bijection
\[
 \Pi_\phi \longrightarrow \widehat{S}_\phi
\]
satisfying the following properties,
where the  the disjoint union runs over conjugacy classes of square-integrable $L$-parameters  $\phi : \WD_F \rightarrow \Sp_n(\C)$
and $\widehat{S}_\phi$ is the group of characters of $S_\phi$.
We write $\sprod{\cdot}{\sigma}$ for the character of $S_\phi$ associated to $\sigma \in \Pi_\phi$.
Set
\[
 \Pi_\phi^\pm = \{ \sigma \in \Pi_\phi : \sprod{-I_{2n}}{\sigma} = \pm 1 \}.
\]
Then $\Pi_\phi^\pm = \Pi_\phi \cap \Irr_{\sqr} \SO(2n+1)^\pm$, and the endoscopic character relations hold (see \cite[\S 4.2 and \S 4.8]{WaldAst3472}).
For example, set
\[
 \Theta_\phi^{\mathrm{st},\pm} = \sum_{\sigma \in \Pi_\phi^\pm} \Theta_\sigma.
\]
Then $\Theta_\phi^{\mathrm{st},\pm}$ is a stable distribution and $- \Theta_\phi^{\mathrm{st}, -}$ is the transfer of $\Theta_\phi^{\mathrm{st},+}$.
Hence it follows from the proof of \cite[Corollary 9.10]{MR1070599} that
\[
 \sum_{\sigma \in \Pi_\phi^+} \frac{d_\sigma}{d_\psi^+} = c \cdot \sum_{\sigma \in \Pi_\phi^-} \frac{d_\sigma}{d_\psi^-}
\]
for some $c>0$ which does not depend on $\phi$,
where $d_\psi^\pm = d_\psi^{\SO(2n+1)^\pm}$ is the Haar measure on $\SO(2n+1)^\pm$ defined in \cite{MR1458303,MR2425185}.
If $\phi$ is the $L$-parameter such that $\phi\rest_{W_F}$ is trivial and $\phi\rest_{\SL(2,\C)}$ corresponds to the regular
unipotent orbit in $\Sp_n(\C)$, then $\Pi_\phi^\pm$ is a singleton consisting of the Steinberg representation of $\SO(2n+1)^\pm$.
Let $d_0^\pm$ be the formal degree of the Steinberg representation of $\SO(2n+1)^\pm$.
Since $d_0^+/d_\psi^+ = d_0^-/d_\psi^-$ by \cite{MR2425185}, we must have $c=1$.
Also, as in the proof of Corollary \ref{cor: nongen}, the other endoscopic character relations imply that the representations
in $\Pi_\phi^-$ have the same formal degree.
These allow us to reduce to the formal degree conjecture for $\SO(2n+1)^-$ to that for $\SO(2n+1)^+$.

\appendix

\section{Globalization of generic square-integrable representations}
\label{app: globalization}

\newcommand{\discdata}{\mathcal{D}}
\newcommand{\ddata}{\mathfrak{d}}
\newcommand{\param}{\mathcal{P}}
\newcommand{\cuspdata}{\mathcal{C}}
\newcommand{\cuspdatum}{\mathfrak{c}}
\newcommand{\cusprm}{\mathcal{R}}
\newcommand{\unr}{\Psi}
\newcommand{\rl}{\operatorname{real}}
\newcommand{\aaa}{\mathfrak{a}}
\newcommand{\Cusp}{\operatorname{Cusp}}

For the moment let $G$ be any reductive group over a $p$-adic field $F$.
As usual, we do not distinguish between $G$ and its group of $F$-points.
We endow $\Irr G$ with the Fell topology.

Let $M$ be a Levi subgroup of $G$. We denote by $\unr(M)$ the group of unramified characters of $M$ and by
$\unr_{\unit}(M)$ (resp.~$\unr_{\rl}(M)$) the subgroup of unitary (resp.~positive real-valued) unramified characters.
Let $X^*(M)$ be the lattice of rational characters of $M$ and let $\aaa_M^*=X^*(M)\otimes_{\Z}\R$.
The map $\lambda\otimes s\mapsto\abs{\lambda}^s$ extends to an isomorphism of $\aaa_M^*$ with $\unr_{\rl}(M)$.
Let $W_M=N_G(M)/M$, where $N_G(M)$ is the normalizer of $M$ in $G$.
We keep in mind that the quotient $\aaa_M^*/W_M$ depends only on the associate class of $M$ (in the sense that if $gM_1g^{-1}=M_2$ then
the corresponding isomorphism $\aaa_{M_1}^*/W_{M_1}\rightarrow\aaa_{M_2}^*/W_{M_2}$ does not depend on $g$).

Let $\cuspdata$ be the set of $G$-orbits of the classes of pairs $(M,\sigma)$ where $M$ is a Levi subgroup of $G$ and $\sigma\in\Irr_{\unit,\cusp}M$ under the equivalence relation $(M,\sigma\chi)\sim (M,\sigma)$ for any $\chi\in\unr_{\unit}(M)$.
If $\pi\in\Irr G$ is a subquotient of an induced representation $I_P(\sigma\chi):=\Ind^G_P(\sigma \chi)$ for some parabolic subgroup $P$ with Levi component $M$ and some $\chi\in\unr(M)$, we let $\cuspdatum(\pi) = [(M,\sigma)]$ and let $\cusprm(\pi)$ be the $W_M$-orbit of $\abs{\chi}\in\unr_{\rl}(M)=\aaa_M^*$.
The maps $\cuspdatum:\Irr G\rightarrow\cuspdata$ and $\cusprm:\Irr G\rightarrow\coprod_{[M]}\aaa_M^*/W_M$ are well-defined and surjective.
By \cite{MR963153} the fibers of $\cuspdatum$ are open and closed and the map $\cusprm$ is continuous.

Let $\discdata$ be the set of $G$-orbits of the classes of pairs $(L,\delta)$ where $L$ is a Levi subgroup of $G$ and $\delta\in\Irr_{\sqr}L$ under the equivalence relation $(L,\delta\chi)\sim(L,\delta)$ for any $\chi\in\unr_{\unit}(L)$.
Suppose that $\pi\in\Irr G$ is a quotient of $I_Q(\delta\chi)$ where $Q$ is a parabolic subgroup with Levi component $L$, $\delta\in\Irr_{\sqr}L$,
and $\chi\in\unr(L)$ is dominant with respect to $Q$, i.e., $\abs{\chi(\alpha^\vee(\varpi))}\le1$ for any simple co-root $\alpha^\vee$ of the split
component of the center of $L$ (with respect to the unipotent radical of $Q$).
Here $\varpi$ is a uniformizer of $\mathcal{O}$.
Then we will write $\ddata(\pi)=[(L,\delta)]$ and $\param(\pi)=W_L\cdot\abs{\chi}\in\unr_{\rl}(L)/W_L=\aaa_L^*/W_L$.
By the Langlands classification, the maps $\ddata:\Irr G\rightarrow\discdata$ and $\param:\Irr G\rightarrow\coprod_{[L]}\aaa_L^*/W_L$ are well-defined and surjective.
Of course neither $\ddata$ nor $\param$ (or even their restrictions to $\Irr_{\unit}G$) is continuous.

The map $\cuspdatum$ factors through the map $\ddata$. By abuse of notation we also denote by $\cuspdatum$ the resulting map $\discdata\rightarrow\cuspdata$.
The fibers of this map are finite (see \cite[Lemma 2.4]{MR963153}, \cite[Th\'eor\`eme VIII.1.2 and Remarque VIII.1.3]{MR1989693}).

We recall the following standard result.
\begin{lemma} \label{lem: tadic sub}
Suppose that $\rho\in\Irr L$, $\chi_m$ is a sequence in $\unr(L)$ and
$\pi_m\in\Irr G$ is a quotient of $I_Q(\rho\chi_m)$ for all $m$.
Assume that $\pi_m\rightarrow\pi$ in $\Irr G$ and $\chi_m\rightarrow\chi$ in $\unr(L)$.
Then $\pi$ is a subquotient of $I_Q(\rho\chi)$.
\end{lemma}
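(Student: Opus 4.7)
My plan is to reduce the problem to a single Bernstein component and then pass to the limit via characters of Jacquet modules. First, since the equivalence relation defining $\cuspdata$ absorbs unramified twists, every irreducible subquotient of $I_Q(\rho\chi_m)$ shares the Bernstein datum $\cuspdatum(\rho)$. Hence each $\pi_m$ lies in the single Bernstein component $[\Omega]\subset\Irr G$ attached to $\cuspdatum(\rho)$. Because the fibers of $\cuspdatum$ are open and closed in the Fell topology by \cite{MR963153}, and $\pi_m\to\pi$, the limit $\pi$ also lies in $[\Omega]$; continuity of $\cusprm$ then forces $\cusprm(\pi)=\cusprm(I_Q(\rho\chi))$.

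Next, I would translate the problem into a statement about multiplicities in Jacquet modules. By Bernstein's second adjointness, the hypothesis that $\pi_m$ is a quotient of $I_Q(\rho\chi_m)$ gives $[r_{\bar Q}(\pi_m):\rho\chi_m]\ge 1$, where $r_{\bar Q}$ denotes the normalized Jacquet functor for the opposite parabolic. Conversely, the same adjointness implies the Grothendieck-group identity $[I_Q(\rho\chi):\pi]=[r_{\bar Q}(\pi):\rho\chi]$, so it would suffice to show $[r_{\bar Q}(\pi):\rho\chi]\ge 1$.

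The crux is therefore a semicontinuity statement: if $\pi_m\to\pi$ in the Fell topology and $[r_{\bar Q}(\pi_m):\rho\chi_m]\ge 1$ with $\rho\chi_m\to\rho\chi$ in $\Irr L$, then $[r_{\bar Q}(\pi):\rho\chi]\ge 1$. I would establish this by combining two ingredients. Fell convergence gives $\Theta_{\pi_m}\to\Theta_\pi$ on $C_c^\infty(G)$, and together with Casselman's character formula this produces the corresponding convergence of characters of the Jacquet modules on the regular set of $L$. On the other hand, within the single Bernstein component of $\Irr L$ attached to $\cuspdatum(\rho)$, the set of irreducibles whose $\cusprm$ value lies in any fixed compact subset of $\aaa_M^*/W_M$ is finite; after passing to a subsequence the characters then involve only finitely many summands, and linear independence of characters together with the non-negativity of multiplicities forces $\rho\chi=\lim\rho\chi_m$ to appear in $r_{\bar Q}(\pi)$.

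The main obstacle I anticipate is making this semicontinuity step precise. A clean alternative avoiding the character analysis is to invoke the Bernstein equivalence between $[\Omega]$ and the category of finitely generated modules over the Hecke algebra attached to the block: under this equivalence $I_Q(\rho\chi)$ corresponds to an algebraic family of finite-length modules over a Noetherian algebra, parameterized by the torus of unramified characters, and the required statement becomes the standard fact that limits of irreducible quotients of such a family remain subquotients of the limiting module.
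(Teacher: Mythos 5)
Your proposal takes a genuinely different route from the paper, but it contains a gap at the central pivot. The "Conversely" step asserts the Grothendieck-group identity $[I_Q(\rho\chi):\pi]=[r_{\bar Q}(\pi):\rho\chi]$ and treats it as a consequence of Bernstein's second adjointness. Second adjointness is an isomorphism of $\Hom$-spaces, $\Hom_G(I_Q(\sigma),\pi)\cong\Hom_L(\sigma,r_{\bar Q}(\pi))$; it does not give equality of composition-series multiplicities in Grothendieck groups, and for general $\rho\in\Irr L$ (not supercuspidal — and the lemma is actually applied later with $\rho=\delta'$ square-integrable) the claimed identity is false. Even the weaker implication you actually need, namely that $\rho\chi$ being a subquotient of $r_{\bar Q}(\pi)$ forces $\pi$ to be a subquotient of $I_Q(\rho\chi)$, does not follow: a subquotient of a Jacquet module need not split off as a sub- or quotient object, so neither adjointness can be applied directly. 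The semicontinuity-of-Jacquet-multiplicities step would also require a careful argument with linear independence of characters on the relevant Bernstein block; this is exactly the content you would need to supply, and its proof is comparable in difficulty to the original lemma. The "clean alternative" via the Bernstein equivalence with Hecke-algebra modules is likewise a restatement, not a proof: "limits of irreducible quotients remain subquotients of the limiting module" is precisely the assertion under dispute, packaged in different language.

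For comparison, the paper's proof is short and quotes machinery already set up in Tadi\'c's \cite{MR963153}. It first passes to contragredients, so that "quotient of $I_Q(\rho\chi_m)$" becomes "subrepresentation of $I_Q(\rho^\vee\chi_m^{-1})$", making the hypotheses of Tadi\'c's results apply. It then cites \cite[Lemma 5.1]{MR963153}, which constructs a subrepresentation $\pi_0\subset I_Q(\rho\chi)$ with $\Theta_{\pi_m}\to\Theta_{\pi_0}$, and \cite[Theorem 5.4]{MR963153}, which upgrades Fell convergence $\pi_m\to\pi$ and the character convergence to the inequality $\Theta_\pi\le\Theta_{\pi_0}$, hence $\pi$ is a subquotient of $\pi_0\subset I_Q(\rho\chi)$. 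Your plan is not unreasonable in spirit (characters, Bernstein components, finiteness within a compact region of parameters are all relevant), but as written it hinges on an unjustified multiplicity identity, whereas the published argument sidesteps the whole issue of Jacquet modules by invoking Tadi\'c's convergence-of-characters results directly.
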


\begin{proof}
By passing to the contragredient we can assume instead that $\pi_m$ is a subrepresentation of $I_Q(\rho\chi_m)$ (rather than a quotient).
Denote by $\Theta_\pi$ the character of $\pi$ (as a distribution on the Hecke algebra of $G$).
By \cite[Lemma 5.1]{MR963153}\footnote{This is stated for $\rho$ supercuspidal in [loc.~cit.] but this assumption is never used in the proof.}
there exists a subrepresentation $\pi_0$ of $I_Q(\rho\chi)$ such that $\Theta_{\pi_m}\rightarrow\Theta_{\pi_0}$.
It follows from [ibid., Theorem 5.4] (with its notation) that $\Theta_\pi\le\Theta_{\pi_0}$ and hence $\pi$ is a subquotient of $\pi_0$.
The lemma follows.
\end{proof}

For any Levi subgroup $L$ let $\Irr_{\sqr,\reg}L$ be the set of $\delta\in\Irr_{\sqr}L$ such that $w\delta\ne\delta$ for any $1\ne w\in W_L$.
Recall that the induced representation $I(\delta)$ is irreducible for any $\delta\in\Irr_{\sqr,\reg}L$.
Set
\[
\Irr_{\temp,\reg}G=\bigcup_L \, \{ I(\delta):\delta\in\Irr_{\sqr,\reg}L \}.
\]
Fix $\delta\in\Irr_{\sqr,\reg}L$. Then
\begin{equation} \label{eq: isolherm}
\begin{aligned}
& \text{if $\chi\in\unr(L)$ is close to $1$ and $I(\delta\chi)$ is hermitian}\\
& \text{then $\chi\in\unr_{\unit}(L)$ and $\delta\chi\in\Irr_{\sqr,\reg}L$.}
\end{aligned}
\end{equation}

For any $[(L,\delta)]\in\discdata$ let $\Irr_{[(L,\delta)]}G$ be the fiber of $[(L,\delta)]$ under $\ddata$. Thus
\[
\Irr G=\coprod_{[(L,\delta)]\in\discdata}\Irr_{[(L,\delta)]}G.
\]
In particular, $\Irr_{\temp,[(L,\delta)]}G$ consists of the various constituents of $I(\delta\chi)$ as we vary
$\chi$ in $\unr_{\unit}(L)$.
Clearly, $\Irr_{\temp,\reg,[(L,\delta)]}G$ is dense in $\Irr_{\temp,[(L,\delta)]}G$ for any $[(L,\delta)]\in\discdata$.

Now let $G=\SO(2n+1)$ be the split odd special orthogonal group.\footnote{In order to apply the analysis of this appendix to other classical groups
one would need the analogue of the results of Jiang--Soudry \cite{MR2058617}.}
Fix $c<\frac12$.
For any Levi subgroup $M$ of co-rank $k$ we can identify $\aaa_M^*/W_M$ with the orbits
in $\R^k$ under the action of the group of signed permutations.
Under this identification let $(\aaa_M^*/W_M)_{\le c}$ be the set of orbits in $[-c,c]^k$.
Consider the following subset of $\Irr G$:
\[
\Irr_{\le c}G=\{\pi\in\Irr G:\ddata(\pi)=[(L,\delta)]\text{ and }\param(\pi)\in(\aaa_L^*/W_L)_{\le c}\}.
\]

Let $\Irr_{\gen}G$ denote the set of irreducible generic representations. Similarly for $\discdata_{\gen}$.
Note that $\ddata:\Irr_{\gen}G\rightarrow\discdata_{\gen}$.

We also remark that
\begin{multline*}
\Irr_{\temp,\reg}G=\{I(\pi_1\otimes\dots\otimes\pi_k\otimes\tau) :\pi_i\in\Irr_{\sqr}\GL_{n_i}, \, i=1,\dots,k,\\
 \pi_1,\pi_1^\vee,\dots,\pi_k,\pi_k^\vee \text{ distinct}, \, \tau\in\Irr_{\sqr}\SO(2(n-n_1-\dots-n_k)+1) \}.
\end{multline*}

\begin{lemma} \label{lem: openinregtemp}
For any $[(L,\delta)]\in\discdata_{\gen}$ the set $\Irr_{\temp,\reg,[(L,\delta)]}G$ is open in $\Irr_{\unit,\gen,\le c} G$.
\end{lemma}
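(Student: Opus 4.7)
The plan is to argue by contradiction: assume $\pi_0 \in \Irr_{\temp,\reg,[(L,\delta)]}G$ is the limit of a sequence $\pi_m$ in $\Irr_{\unit,\gen,\le c}G$ with $\pi_m \notin \Irr_{\temp,\reg,[(L,\delta)]}G$ for all $m$, and derive a contradiction. Writing $\pi_0 = I(\delta_0)$ with $\delta_0 = \delta\chi_0 \in \Irr_{\sqr,\reg}L$ and $\chi_0 \in \unr_{\unit}(L)$, I would first use the fact that the fibers of $\cuspdatum$ are open to pass to a tail on which $\cuspdatum(\pi_m) = \cuspdatum(\pi_0)$. Because the induced map $\cuspdatum : \discdata \to \cuspdata$ has finite fibers, a subsequence can be extracted on which $\ddata(\pi_m) = [(L_1,\delta_1)]$ is constant; after a further subsequence a parabolic $Q_1$ with Levi $L_1$ is also fixed, and $\pi_m$ is realized as the Langlands quotient of $I_{Q_1}(\delta_1\chi_m)$ with $\chi_m \in \unr(L_1)$ dominant with respect to $Q_1$. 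I would then invoke the generic standard module conjecture for $\SO(2n+1)$: since $\pi_m$ is generic this upgrades to an isomorphism $\pi_m \cong I_{Q_1}(\delta_1\chi_m)$ with the standard module irreducible.

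Next, the bound $\param(\pi_m) \in (\aaa_{L_1}^*/W_{L_1})_{\le c}$ confines $\abs{\chi_m}$ to a compact subset of $\unr_{\rl}(L_1)$, so after one more extraction $\chi_m \to \chi_\infty$ in $\unr(L_1)$. Lemma \ref{lem: tadic sub} then exhibits $\pi_0$ as a subquotient of the limiting standard module $I_{Q_1}(\delta_1\chi_\infty)$. Here the strict inequality $c < 1/2$ is the point where the hypothesis really enters: it keeps $\chi_\infty$ inside the open region where $I_{Q_1}(\delta_1\cdot)$ is irreducible (the relevant reducibility hyperplanes of the Plancherel measure for $\SO(2n+1)$ sit at distance $\ge 1/2$ from the tempered locus). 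Granted this, $\pi_0 \cong I_{Q_1}(\delta_1\chi_\infty)$; comparing with the other realization $\pi_0 = I(\delta_0)$ and using the uniqueness of the Langlands data together with the regularity of $\delta_0$, one concludes that $(L_1,\delta_1,\chi_\infty)$ is $W$-conjugate to $(L,\delta_0,1)$. In particular $\chi_\infty$ is unitary and $[(L_1,\delta_1)] = [(L,\delta)]$ in $\discdata$.

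Finally, after replacing the chosen representative of $[(L,\delta)]$ and conjugating by a Weyl element, we may take $(L_1,\delta_1) = (L,\delta)$ and $\chi_\infty = \chi_0$. Setting $\mu_m = \chi_m\chi_0^{-1}$, we have $\mu_m \to 1$ in $\unr(L)$ and $\pi_m = I((\delta\chi_0)\mu_m)$ is hermitian, being unitary. Applying \eqref{eq: isolherm} with the regular square-integrable representation $\delta\chi_0 \in \Irr_{\sqr,\reg}L$ in the role of $\delta$, we deduce that for $m$ large $\mu_m \in \unr_{\unit}(L)$ and $(\delta\chi_0)\mu_m \in \Irr_{\sqr,\reg}L$. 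Thus $\pi_m \in \Irr_{\temp,\reg,[(L,\delta)]}G$, contradicting the choice of the sequence.

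The hardest step will be the middle paragraph: verifying that the limiting standard module $I_{Q_1}(\delta_1\chi_\infty)$ is irreducible (this is exactly where the strict bound $c < 1/2$ is used, via the location of the reducibility hyperplanes of the Plancherel measure for generic discrete data on $\SO(2n+1)$), and then pinning down $(L_1,\delta_1,\chi_\infty)$ up to the Weyl action by combining the uniqueness of Langlands data with the regularity of $\delta_0$; the remaining bookkeeping is routine but it is this identification that makes the argument work.
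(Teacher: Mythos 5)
Your overall scaffolding (extract subsequences on which $\ddata$ is constant, realize $\pi_m$ inside $I_{Q_1}(\delta_1\chi_m)$, pass to a limit via Lemma \ref{lem: tadic sub}, and finish with \eqref{eq: isolherm}) is close to the paper's. The genuine gap is in the middle paragraph, precisely where you admit the hardest step lies.

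Your key claim is that $c<\tfrac12$ forces $I_{Q_1}(\delta_1\chi_\infty)$ to be irreducible because ``the relevant reducibility hyperplanes of the Plancherel measure for $\SO(2n+1)$ sit at distance $\ge \tfrac12$ from the tempered locus.'' This is not justified and, as stated, is not correct. First, on the tempered locus itself (i.e., when $\chi_\infty$ is unitary) the induced representation can be reducible when the $R$-group is nontrivial; the distance-$\ge\tfrac12$ heuristic says nothing about this, and yet irreducibility is exactly what you need in order to write $\pi_0\cong I_{Q_1}(\delta_1\chi_\infty)$ and invoke uniqueness of Langlands data. Second, even off the unitary axis, the reducibility of $I_{Q_1}(\delta_1\chi)$ for $\delta_1=\rho_1\otimes\dots\otimes\rho_r\otimes\tau$ and $\chi$ with exponents $(s_1,\dots,s_r)$ is governed by $L$-functions at arguments like $s_i\pm s_j$, which can be $\ge 1$ even when each $\abs{s_i}<\tfrac12$. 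So ``$c<\tfrac12$'' does not, by itself, confine $\chi_\infty$ to the interior of a region of irreducibility, and you supply no citation that repairs this.

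What the paper actually uses is a different and more elementary fact: for generic square-integrable representations of Levi subgroups of $\SO(2n+1)$, the cuspidal support is half-integral, i.e., $\cusprm(\delta')\in\tfrac12\Z^k$ (Mui\'c for the orthogonal factor, Bernstein--Zelevinsky for the $\GL$ factors). Then the argument is a direct lattice computation with $\cusprm$: one writes $\cusprm(\pi_m)=W_M\cdot(\mu'+\lambda_m)$ with $\mu'$ representing $\cusprm(\delta')$ and $\lambda_m\in[-c,c]^k$, notes that both $\cusprm(\pi)$ and $\mu'$ are half-integral, and concludes from $c<\tfrac12$ that $\lambda_m\to 0$, i.e., that $\chi$ is unitary. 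Once $\chi$ is unitary, $I_{Q'}(\delta'\chi)$ is tempered with discrete support $[(L',\delta')]$, so $\pi$ being a constituent (Lemma \ref{lem: tadic sub}) forces $[(L,\delta)]=[(L',\delta')]$ directly, with no appeal to irreducibility of the full induction or to the uniqueness of Langlands data. After this, \eqref{eq: isolherm} finishes exactly as you describe. A secondary remark: your appeal to the generic standard module theorem is unnecessary overhead, since Lemma \ref{lem: tadic sub} only requires $\pi_m$ to be a quotient of $I_{Q_1}(\delta_1\chi_m)$, not that the standard module be irreducible.
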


\begin{proof}
Let $[(M,\sigma)]=\cuspdatum([(L,\delta)])$. We identify $\aaa_M^*$ with $\R^k$ as above.
The key fact is that $\cusprm(\delta)\in\frac12\Z^k$. This follows immediately from the results of Mui\'c
on generic square-integrable representations of classical groups \cite{MR1637097}
and of course the Bernstein--Zelevnisky classification of square-integrable representations of $\GL_m$
\cite{MR0579172, MR584084}.

Suppose that $\pi_m\in\Irr_{\unit,\gen,\le c} G$ is a sequence such that $\pi_m\rightarrow\pi\in\Irr_{\temp,[(L,\delta)]}G$.
For $m\gg1$ we have $\cuspdatum(\pi_m)=[(M,\sigma)]$. Therefore, by
passing to a subsequence we can assume that there exists $[(L',\delta')]\in\discdata_{\gen}$ with $\cuspdatum([(L',\delta')])=[(M,\sigma)]$
such that $\ddata(\pi_m)=[(L',\delta')]$ for all $m$.
We write $\pi_m$ as a quotient of $I_{Q'}(\delta'\chi_m)$ with $\chi_m\in\unr(L')$ dominant with respect to a parabolic subgroup $Q'$.
By \cite[Theorem 2.5]{MR963153} we may assume, by passing to a subsequence, that $\chi_m$ converges, say to $\chi$.
We claim that $\chi$ is unitary.
Indeed, since $\pi_m\in\Irr_{\gen,\le c}G$ we have $\cusprm(\pi_m)=W_M\cdot(\mu'+\lambda_m)$ where
$W_M\cdot\mu'=\cusprm(\delta')$ and $\lambda_m\in [-c,c]^k$ (corresponding to $\abs{\chi_m}$).
Since $W_M\cdot(\mu'+\lambda_m)\rightarrow \cusprm(\pi)$ and $\cusprm(\pi),\cusprm(\delta')\in\frac12\Z^k$ we necessarily have $\lambda_m\rightarrow0$
so that $\chi$ is unitary as claimed.

By Lemma \ref{lem: tadic sub} we infer that $\pi$ is a constituent of $I_{Q'}(\delta'\chi)$.
In particular, $[(L,\delta)]=[(L',\delta')]$.
Now suppose in addition that $\pi\in\Irr_{\temp,\reg}G$. Then $\pi=I_{Q'}(\delta\chi)$
and by \eqref{eq: isolherm}, $\pi_m\in\Irr_{\temp,\reg}G$ for $m\gg1$ as required.
\end{proof}

\begin{remark}
Note that Lemma \ref{lem: openinregtemp} and its proof carries over to other classical groups.
Moreover, using the M\oe glin--Tadi\'c classification of square-integrable representations of classical groups \cite{MR1913095, MR1896238},
the same argument shows that $\Irr_{\temp,\reg,[(L,\delta)]}G$ is open in $\Irr_{\unit,\le c} G$ for any $[(L,\delta)]\in\discdata$.
\end{remark}

The following result follows from \cite{MR3097945} (cf.~\cite[\S6.3]{1203.0039}).
\begin{proposition} \label{prop: suppwhitplanch}
The support of the Plancherel measure on $L^2(N\bs G,\psi_N)$ (where $N$ is a maximal unipotent subgroup of $G$ and $\psi_N$ is a non-degenerate character of $N$)
is precisely the closure of the irreducible $\psi_N$-generic tempered representations of $G$.
\end{proposition}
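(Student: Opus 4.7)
The plan is to deduce this from the Plancherel decomposition of the Whittaker space, which is a special case of the theory for spherical varieties with characters developed by Sakellaridis--Venkatesh. One inclusion is the easy direction and can be handled by abstract representation theory; the other uses the explicit description of the Plancherel measure.

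First I would establish the containment
\[
 \operatorname{supp}(\text{Plancherel measure}) \subseteq \overline{\Irr_{\temp,\genpsi{\psi_N}} G}.
\]
By Bernstein's abstract Plancherel theorem, the Plancherel decomposition of any unitary $G$-representation is supported on the tempered dual, so we need only exclude non-generic tempered representations. But a tempered $\pi$ can contribute to $L^2(N\bs G,\psi_N)$ only via the space $\Hom_N(\pi,\psi_N)$ of $\psi_N$-Whittaker functionals, which vanishes unless $\pi\in\Irr_{\genpsi{\psi_N}} G$. This gives the inclusion.

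For the reverse inclusion, I would invoke the main Plancherel theorem of \cite{MR3097945} (see also \S 6.3 of \cite{1203.0039}), which in the Whittaker case $X = N\bs G$ with character $\psi_N$ yields a decomposition
\[
 L^2(N\bs G,\psi_N) \simeq \int^{\oplus} \pi \, d\mu(\pi)
\]
in which $d\mu$ has full support on the closure of $\Irr_{\temp,\genpsi{\psi_N}} G$. The key ingredients feeding into this theorem are the uniqueness of the Whittaker functional (Shalika), Casselman's asymptotic expansion of matrix coefficients controlling the behavior of Whittaker functions at infinity along $N\bs G$, and Harish-Chandra's Plancherel formula on $G$ itself, which together permit the explicit identification of $d\mu$ with a measure absolutely continuous with respect to the tempered Plancherel density on the generic tempered dual.

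The only delicate point, and the reason the statement involves a closure, is that $\Irr_{\temp,\genpsi{\psi_N}} G$ is not a priori a closed subset of $\Irr_{\temp} G$; the direct integral above naturally ranges over its closure. Once the decomposition is in hand, the equality of supports follows immediately.
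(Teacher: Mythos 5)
Your proposal correctly identifies the relevant references — Delorme's Whittaker Plancherel formula \cite{MR3097945} and the discussion in \cite[\S6.3]{1203.0039} — and indeed the paper's ``proof'' of this proposition is nothing more than a citation to \cite{MR3097945}. In that sense you have located the right source. However, your attempt to elaborate introduces a genuine error, and it also inverts which inclusion is easy.

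The claim ``By Bernstein's abstract Plancherel theorem, the Plancherel decomposition of any unitary $G$-representation is supported on the tempered dual'' is false. Bernstein's abstract direct-integral decomposition applies to arbitrary unitary representations of a type-I group and says nothing about temperedness; for instance, the Plancherel decomposition of $L^2(T\bs\GL_2)$ for a split torus $T$ involves complementary series. Temperedness of the support of the Whittaker Plancherel measure is not an abstract fact: it is precisely the hard analytic content of Delorme's theorem (and, in the archimedean case, of Harish-Chandra's and Wallach's work). It rests on the asymptotic analysis of Whittaker functions and the exhaustion argument, exactly as in Harish-Chandra's Plancherel theorem for $L^2(G)$. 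So the inclusion $\operatorname{supp}\subseteq\overline{\Irr_{\temp,\genpsi{\psi_N}}G}$ is the \emph{hard} direction, not the easy one. Once temperedness is known, your reduction to the existence of a $\psi_N$-Whittaker functional is fine, but that is a small observation layered on top of the deep theorem.

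Conversely, the inclusion $\Irr_{\temp,\genpsi{\psi_N}}G\subseteq\operatorname{supp}$ is the ``easier'' direction (via wave packets of Whittaker functions along the tempered spectrum, analogous to Eisenstein integrals in the group case), and this is the only inclusion actually needed in the paper for the globalization argument. The paper's remark after the proposition makes exactly this point, and your proposal assigns the labels the other way around. In short: cite Delorme for both directions, do not try to derive temperedness from abstract representation theory, and note that only the containment of generic tempered representations in the support is used in what follows.
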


For our purposes we only need to know that the irreducible $\psi_N$-generic tempered representations of $G$ are contained in the support
of the Plancherel measure, which is the easier direction.

Now let $k$ be a number field and let $G=\SO(2n+1)$ be the split odd special orthogonal group over $k$.
Let $\Cusp_{\gen}G(\A_k)$ be the set of irreducible globally generic cuspidal automorphic representations of $G(\A_k)$.
\begin{proposition} \label{prop: sepctralgap}
Let $c=\frac12-\frac{1}{4n^2+1}$.
Then for any place $v$ of $k$ and for any $\pi\in\Cusp_{\gen}G(\A_k)$ we have $\pi_v\in\Irr_{\unit,\gen,\le c}G(k_v)$.
\end{proposition}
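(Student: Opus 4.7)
The plan is to combine the Jiang--Soudry weak functorial lift from $\SO(2n+1)$ to $\GL_{2n}$ with the Luo--Rudnick--Sarnak bounds toward the Ramanujan conjecture on $\GL_m$, applied to each cuspidal constituent of the lift.

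By \cite[Theorem E]{MR2058617}, $\pi$ admits a weak lift to an isobaric automorphic representation $\Pi=\Pi_1\boxplus\cdots\boxplus\Pi_k$ of $\GL_{2n}(\A_k)$, where each $\Pi_j$ is an irreducible self-dual cuspidal automorphic representation of $\GL_{2n_j}(\A_k)$ with $n_1+\cdots+n_k=n$, and $\Pi_w=\JSlift(\pi_w)$ at every place $w$ of $k$.

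Fix a place $v$ of $k$ and write $\pi_v$ as the Langlands quotient of $I_Q(\delta\chi)$, where $Q$ is a parabolic of $G(k_v)$ with Levi $L=\GL_{m_1}\times\cdots\times\GL_{m_r}\times\SO(2n_0+1)$, $\delta=\delta_1\otimes\cdots\otimes\delta_r\otimes\delta_0\in\Irr_{\sqr}L$, and $\chi=\abs{\det}^{s_1}\otimes\cdots\otimes\abs{\det}^{s_r}\otimes1$ with $s_1\ge\cdots\ge s_r\ge0$, so that $\param(\pi_v)=(s_1,\dots,s_r)$. Since $\pi_v$ is unitary and generic, Mui\'c's irreducibility theorem \cite{MR1637097} forces $\delta_0$ to be generic. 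Combining the multiplicativity of Rankin--Selberg $\gamma$-factors with the defining identity \eqref{eq: gamma} of $\JSlift$, one sees that $\Pi_v=\JSlift(\pi_v)$ is the Langlands quotient on $\GL_{2n}(k_v)$ of the ``doubled'' data
\[
 \delta_1\abs{\det}^{s_1}\otimes\cdots\otimes\delta_r\abs{\det}^{s_r}\otimes\JSlift(\delta_0)\otimes\delta_r^\vee\abs{\det}^{-s_r}\otimes\cdots\otimes\delta_1^\vee\abs{\det}^{-s_1},
\]
in which $\JSlift(\delta_0)\in\Irr_{\msqr}\GL_{2n_0}(k_v)$ is tempered by Theorem~\ref{thm: bijection}. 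Hence the Langlands exponents of $\Pi_v$ are precisely $\pm s_1,\dots,\pm s_r$.

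Finally, since $\Pi_v=\Pi_{1,v}\times\cdots\times\Pi_{k,v}$ is the parabolic induction of the local components of the isobaric summands, the Langlands parameter of $\Pi_v$ decomposes as the direct sum of those of the $\Pi_{j,v}$, and its Langlands exponents form the union of those of the $\Pi_{j,v}$. By the Luo--Rudnick--Sarnak bound \cite{MR1703764}, extended to all places of any number field by \cite{MR2245761,MR2053600}, every Langlands exponent of $\Pi_{j,v}$ has absolute value at most $\tfrac12-\frac{1}{(2n_j)^2+1}\le c$. Therefore $\abs{s_i}\le c$ for every $i$, so $\param(\pi_v)\in(\aaa_L^*/W_L)_{\le c}$, yielding $\pi_v\in\Irr_{\unit,\gen,\le c}G(k_v)$ as required. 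The main technical obstacle is the multiplicativity of $\JSlift$ in this non-tempered setting---i.e., the precise identification of the Langlands data of $\Pi_v$ in terms of that of $\pi_v$---which follows from \eqref{eq: gamma} and the standard multiplicativity of Rankin--Selberg $\gamma$-factors under parabolic induction, but requires care when $\delta_0$ is square-integrable but not supercuspidal.
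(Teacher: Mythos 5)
Your proof is substantially correct and follows the same route the paper has in mind; the paper treats the proposition as an ``immediate consequence'' and does not spell out the intermediate steps you give. The logic is exactly as you describe: write $\pi_v$ as a Langlands quotient with exponents $(s_1,\dots,s_r)$ on the $\GL$-part of the Levi; the $\gamma$-factor characterization~\eqref{eq: gamma} of $\JSlift$ together with multiplicativity of Rankin--Selberg $\gamma$-factors forces $\JSlift(\pi_v)$ to be the Langlands quotient of the doubled data, so its exponents are $\{\pm s_1,\dots,\pm s_r\}$ plus zeros coming from the tempered $\JSlift(\delta_0)$; and LRS (extended to ramified and archimedean places) bounds each exponent of each cuspidal summand $\Pi_{j,v}$ on $\GL_{2n_j}$ by $\tfrac12-\tfrac{1}{(2n_j)^2+1}\le c$. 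This gives $|s_i|\le c$ for all $i$, as required.

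One caution, though, concerning the citation for the temperedness of $\JSlift(\delta_0)$. You invoke Theorem~\ref{thm: bijection}, but that theorem is about the descent $\des_{\psi}$, not about $\JSlift$, so it does not directly assert what you use. More importantly, the proof of Theorem~\ref{thm: bijection} rests on Proposition~\ref{prop: triangle}, which in turn uses Corollary~\ref{lem: globalize2}, which uses Proposition~\ref{prop: sepctralgap2}, which uses the very Proposition~\ref{prop: sepctralgap} you are proving; so appealing to Theorem~\ref{thm: bijection} here would be circular. Fortunately, the fact you need --- that $\JSlift$ carries $\Irr_{\sqr,\gen}\SO(2n_0+1)$ into $\Irr_{\msqr}\GL_{2n_0}$, and that representations in $\Irr_{\msqr}$ are tempered (being products of distinct unitary square-integrable representations) --- is exactly the Jiang--Soudry bijection quoted at the start of Section~\ref{s: so}, which is established in \cite{MR1983781, MR2058617} independently of any result of this paper (see the footnote there). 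Citing that directly removes the potential circularity and makes the proof clean.
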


\begin{proof}
This is an immediate consequence of the Jiang--Soudry description \cite{MR2058617}
of the Cogdell--Kim--Piatetski-Shapiro--Shahidi lift from $\SO(2n+1)$ to $\GL_{2n}$ \cite{MR1863734}
together with the results of Rudnick--Luo--Sarnak \cite{MR1703764} on bounds towards the Generalized Ramanujan Conjecture for $\GL_m$
(in our case $m=2n$), extended to the ramified case independently in \cite{MR2053600} and \cite{MR2245761}.
\end{proof}

Proposition \ref{prop: sepctralgap}, together with Lemma \ref{lem: openinregtemp} and Proposition \ref{prop: suppwhitplanch},
is useful for the application of the globalization result \cite[Theorem 16.3.2]{1203.0039}\footnote{Strictly speaking
the Whittaker case is not part of the statement, but the proof holds with obvious modifications.}.

\begin{corollary} \label{lem: globalize}
Let $S$ be a finite set of non-archimedean places of $k$.
Then the set
\[
\{\pi_S:\pi\in\Cusp_{\gen}G(\A_k)\}\cap\Irr_{\temp,\gen,\reg}G(k_S)
\]
is dense in $\Irr_{\temp,\gen,\reg}G(k_S):=\prod_{v\in S}\Irr_{\temp,\gen,\reg}G(k_v)$.
\end{corollary}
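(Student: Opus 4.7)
The plan is to combine the Whittaker variant of the Sakellaridis--Venkatesh globalization theorem \cite[Theorem 16.3.2]{1203.0039} with the three preparatory results established immediately above: the Whittaker Plancherel support computation (Proposition \ref{prop: suppwhitplanch}), the openness of regular tempered generic representations inside a slice of the unitary dual (Lemma \ref{lem: openinregtemp}), and the spectral gap for local components of globally generic cuspidal automorphic representations on $\SO(2n+1)$ (Proposition \ref{prop: sepctralgap}).

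Concretely, I would fix an arbitrary $\pi^0 = \otimes_{v \in S} \pi^0_v \in \Irr_{\temp,\gen,\reg}G(k_S)$. By Proposition \ref{prop: suppwhitplanch}, each $\pi^0_v$ lies in the support of the Whittaker Plancherel measure on $G(k_v)$, so the Sakellaridis--Venkatesh globalization (in its Whittaker form) produces a sequence $\pi^{(m)} \in \Cusp_{\gen}G(\A_k)$ with $\pi^{(m)}_S \to \pi^0$ in the Fell topology on $\prod_{v \in S} \Irr G(k_v)$. A priori this only approximates $\pi^0$ inside the ambient space of all irreducible representations, so the remaining task is to promote the approximation into the subspace $\Irr_{\temp,\gen,\reg}G(k_S)$.

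This is where the local control enters. With $c = \tfrac12 - \tfrac{1}{4n^2+1}$, Proposition \ref{prop: sepctralgap} forces each $\pi^{(m)}_v$ ($v \in S$) into $\Irr_{\unit,\gen,\le c}G(k_v)$. Taking the union of the assertion of Lemma \ref{lem: openinregtemp} over all discrete data $[(L,\delta)] \in \discdata_{\gen}$ with $\delta$ regular yields that $\Irr_{\temp,\gen,\reg}G(k_v)$ is open in $\Irr_{\unit,\gen,\le c}G(k_v)$. Since $\pi^0_v$ lies in this open subset and $\pi^{(m)}_v \to \pi^0_v$, for $m$ sufficiently large $\pi^{(m)}_v \in \Irr_{\temp,\gen,\reg}G(k_v)$ for every $v \in S$. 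Hence $\pi^{(m)}_S$ eventually belongs to $\Irr_{\temp,\gen,\reg}G(k_S)$ and still converges to $\pi^0$, giving the required density.

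The main obstacle I anticipate is not an estimate within this argument but rather the verification mentioned in the footnote, namely that the Sakellaridis--Venkatesh theorem, phrased for spherical varieties, applies to the Whittaker period with only cosmetic changes in its proof (one needs a relative trace formula for $(N,\psi_N)\bs G$ with appropriate convergence of the spectral side at a test function localised at $S$). Granted that adaptation, everything else is essentially assembling Propositions \ref{prop: suppwhitplanch}, \ref{prop: sepctralgap} and Lemma \ref{lem: openinregtemp}; note in particular that the choice $c<\tfrac12$ in the definition of $\Irr_{\le c}G$ is precisely what is needed so that the Rudnick--Luo--Sarnak / Bergeron--Clozel / M\"uller--Speh bounds feed into Lemma \ref{lem: openinregtemp} via the half-integrality of $\cusprm(\delta)$ used there.
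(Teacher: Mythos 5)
Your proof is correct and follows exactly the route the paper intends: the paper itself states the corollary as a one-line combination of Propositions \ref{prop: suppwhitplanch}, \ref{prop: sepctralgap}, Lemma \ref{lem: openinregtemp}, and the Sakellaridis--Venkatesh globalization, and your argument simply spells out that combination (apply SV to approximate a given $\pi^0\in\Irr_{\temp,\gen,\reg}G(k_S)$ by components of globally generic cusp forms, then use the spectral gap plus openness of $\Irr_{\temp,\gen,\reg}$ inside $\Irr_{\unit,\gen,\le c}$ to push the approximants into the target set). The only caveats are cosmetic: the Fell topology is not first countable, so the ``sequence'' language should really be phrased in terms of open neighbourhoods or nets, and when you take the union over $[(L,\delta)]\in\discdata_{\gen}$ you do not need to restrict to classes ``with $\delta$ regular'' (the sets $\Irr_{\temp,\reg,[(L,\delta)]}G$ may simply be empty for some classes); neither affects the argument.
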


We now treat the metaplectic group $\tilde G=\Mp_n$.
We only consider genuine representations and fix a non-degenerate character $\psi_{\tilde N}$ as in \S\ref{s: mp}.
The above analysis works with obvious modifications.
(See \cite{Ban-Jan} for the Langlands quotient theorem in this context.)
In particular, the analogue of Lemma \ref{lem: openinregtemp} holds for $\tilde G$.
(The half-integrality of $\cusprm(\delta)$ for generic square-integrable representations follows from the results of
\cite{MR2999299} and the corresponding statement for $\SO(2n+1)$.)

Let $\Cusp_{\genpsi{\psi_{\tilde N}}}^\circ\tilde G(\A_k)$ be the set of irreducible globally $\psi_{\tilde N}$-generic cuspidal
automorphic representations of $\tilde G(\A_k)$ whose theta $\psi^{-1}$-lift to $G=\SO(2n+1)$ is cuspidal.
(This lift will be non-zero and globally generic by \cite{MR1353315} and irreducible by \cite{MR2330445}.)
The following result follows immediately from Proposition \ref{prop: sepctralgap} and
the compatibility of the theta correspondence with the Langlands classification \cite{MR2999299}.
\begin{proposition} \label{prop: sepctralgap2}
Let $c=\frac12-\frac{1}{4n^2+1}$.
Then for any place $v$ of $k$ and for any $\tilde\pi\in\Cusp_{\genpsi{\psi_{\tilde N}}}^\circ\tilde G(\A_k)$ we have
$\tilde\pi_v\in\Irr_{\unit,\genpsi{\psi_{\tilde N}},\le c}\tilde G(k_v)$.
\end{proposition}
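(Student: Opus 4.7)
The plan is to simply chase the data through the theta correspondence and invoke Proposition~\ref{prop: sepctralgap}. First, for $\tilde\pi\in\Cusp_{\genpsi{\psi_{\tilde N}}}^\circ\tilde G(\A_k)$, let $\Sigma$ denote the global theta $\psi^{-1}$-lift of $\tilde\pi$ to $G(\A_k)=\SO(2n+1,\A_k)$. By the very definition of $\Cusp^\circ$, the representation $\Sigma$ is cuspidal; by \cite{MR1353315} it is non-zero and globally generic; and by \cite{MR2330445} it is irreducible. Hence $\Sigma\in\Cusp_{\gen}G(\A_k)$, and Proposition~\ref{prop: sepctralgap} applied to $\Sigma$ yields $\Sigma_v\in\Irr_{\unit,\gen,\le c}G(k_v)$ for every place $v$ of $k$.

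Next, I would transport this information to $\tilde\pi_v$. By the Howe duality, the local theta $\psi_v^{-1}$-lift of $\tilde\pi_v$ is precisely $\Sigma_v$. The compatibility of the local theta correspondence with the Langlands classification, due to Gan--Savin \cite{MR2999299}, then gives a matching of Langlands data: if $\tilde\pi_v$ is the Langlands quotient associated to a Levi $\tilde L=\GL_{n_1}\times\cdots\times\GL_{n_k}\times\Mp_{n_0}$, a discrete series $\delta=\tau_1\otimes\cdots\otimes\tau_k\otimes\tilde\pi_0$ of $\tilde L$, and a dominant unramified character $\chi$ on the $\GL$-factors, then $\Sigma_v$ is the Langlands quotient for the corresponding Levi $L=\GL_{n_1}\times\cdots\times\GL_{n_k}\times\SO(2n_0+1)$ of $\SO(2n+1)$, the discrete series $\tau_1\otimes\cdots\otimes\tau_k\otimes\sigma_0$ with $\sigma_0=\Thetatwo_{\psi_v^{-1}}(\tilde\pi_0)$, and the same $\chi$. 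Since $\param$ depends only on $\abs{\chi}$ (which lives on the split center of the $\GL$-blocks, naturally identified for $\tilde L$ and $L$), one reads off $\param(\tilde\pi_v)=\param(\Sigma_v)\in(\aaa_{\tilde L}^*/W_{\tilde L})_{\le c}$.

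Finally, $\tilde\pi_v$ is unitary as the local component of a cuspidal automorphic representation, and it is $\psi_{\tilde N,v}$-generic because $\tilde\pi$ is globally $\psi_{\tilde N}$-generic. Combining these facts gives $\tilde\pi_v\in\Irr_{\unit,\genpsi{\psi_{\tilde N}},\le c}\tilde G(k_v)$. The only non-formal ingredient is the Langlands-compatibility of the local theta correspondence; granted that, the proposition is immediate, which is why it is labeled a corollary of Proposition~\ref{prop: sepctralgap}. The main step to verify carefully in a write-up is precisely the matching of $\chi$ under the theta lift at the level of the Langlands classification, so that the $\param$-invariants coincide; everything else is essentially bookkeeping.
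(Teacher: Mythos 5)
Your proposal is correct and takes essentially the same route as the paper: the paper's proof is the one-line remark that the result ``follows immediately from Proposition~\ref{prop: sepctralgap} and the compatibility of the theta correspondence with the Langlands classification,'' and you have simply spelled out that remark — pass to the global theta $\psi^{-1}$-lift $\Sigma$ (cuspidal by the definition of $\Cusp^\circ$, non-zero, irreducible and globally generic), apply Proposition~\ref{prop: sepctralgap} to $\Sigma_v$, and transfer the bound on $\param$ back to $\tilde\pi_v$ via the Gan--Savin matching of Langlands data for the dual pair.
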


Finally using the globalization result \cite[Theorem 16.3.2]{1203.0039} (or more precisely, its modification to the case at hand)
we obtain:
\begin{corollary} \label{lem: globalize2}
Let $S$ be a finite set of non-archimedean places of $k$.
Then the set
\[
\{\tilde\pi_S:\tilde\pi\in\Cusp_{\genpsi{\psi_{\tilde N}}}^\circ\tilde G(\A_k)\}\cap\Irr_{\temp,\genpsi{\psi_{\tilde N}},\reg}\tilde G(k_S)
\]
is dense in $\Irr_{\temp,\genpsi{\psi_{\tilde N}},\reg}\tilde G(k_S):=\prod_{v\in S}\Irr_{\temp,\genpsi{\psi_{\tilde N}},\reg}\tilde G(k_v)$.
\end{corollary}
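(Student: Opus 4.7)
The plan is to mirror the proof of Corollary \ref{lem: globalize} in the metaplectic setting. Concretely, I would invoke the Whittaker-period version of the globalization theorem of Sakellaridis--Venkatesh \cite[Theorem 16.3.2]{1203.0039}, adapted to $\tilde G = \Mp_n$, and feed it the three local inputs that made the orthogonal case work: (i) the analog of Lemma \ref{lem: openinregtemp} for $\tilde G$, which the authors have already asserted in the paragraph preceding Proposition \ref{prop: sepctralgap2} (its proof is a verbatim transcription of the $\SO(2n+1)$ argument, using the half-integrality of $\cusprm(\delta)$ for generic square-integrable $\delta$ that is inherited from the analogous statement on $\SO(2n+1)$ via \cite{MR2999299}); (ii) the analog of Proposition \ref{prop: suppwhitplanch} for $\tilde G$, ensuring that every irreducible $\psi_{\tilde N}$-generic tempered representation of $\tilde G(k_v)$ lies in the support of the Plancherel measure on $L^2(\tilde N\bs\tilde G,\psi_{\tilde N})(k_v)$; and (iii) the spectral gap Proposition \ref{prop: sepctralgap2}.

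Armed with (i)--(iii), the Sakellaridis--Venkatesh machine produces, for any $\tilde\sigma_S = \otimes_{v \in S}\tilde\sigma_v$ in $\Irr_{\temp,\genpsi{\psi_{\tilde N}},\reg}\tilde G(k_S)$ and any open neighborhood $\Omega$ of $\tilde\sigma_S$, some $\tilde\pi\in\Cusp_{\genpsi{\psi_{\tilde N}}}\tilde G(\A_k)$ with $\tilde\pi_S\in\Omega$. Exactly as in the proof of Lemma \ref{lem: openinregtemp} (applied place by place in $S$), the openness of $\Irr_{\temp,\reg,[(L,\delta)]}$ in $\Irr_{\unit,\genpsi{\psi_{\tilde N}},\le c}$ combined with the spectral gap forces $\tilde\pi_v$ to land in $\Irr_{\temp,\genpsi{\psi_{\tilde N}},\reg}\tilde G(k_v)$ for each $v\in S$ once $\Omega$ is chosen small enough.

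The main obstacle is to upgrade the conclusion from $\Cusp_{\genpsi{\psi_{\tilde N}}}$ to $\Cusp^{\circ}_{\genpsi{\psi_{\tilde N}}}$, i.e., to ensure that the global theta $\varPsi^{-1}$-lift of $\tilde\pi$ to $\SO(2n+1,\A_k)$ is \emph{cuspidal}, not merely non-zero (non-vanishing and global genericity of the lift come for free from \cite{MR1353315}, and irreducibility from \cite{MR2330445}). The strategy is to enforce cuspidality by imposing one additional local constraint: pick an auxiliary non-archimedean place $v_0\notin S$ and require in the Sakellaridis--Venkatesh construction that $\tilde\pi_{v_0}$ be supercuspidal and $\psi_{\tilde N,v_0}$-generic. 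This is done by choosing the archimedean/$v_0$ matrix coefficient in the standard way (as in the proof of Proposition \ref{prop: gamma2}), which is compatible with the local openness hypothesis since supercuspidality is an open condition in the Bernstein center sense. Then Kudla's supercuspidal theorem \cite{MR818351} forces the local theta lift of $\tilde\pi_{v_0}$ to land on the correct rung of the Witt tower, and combined with the global non-vanishing of the theta lift to $\SO(2n+1,\A_k)$ this forces the global lift to be cuspidal.

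Finally, the regularity condition $\reg$ at places of $S$ is preserved by $\Omega$-approximation thanks to part (i) of Lemma \ref{lem: openinregtemp}'s analog, which guarantees that $\Irr_{\temp,\reg,[(L,\delta)]}$ is open in $\Irr_{\unit,\genpsi{\psi_{\tilde N}},\le c}$. The only delicate point is compatibility: the imposed supercuspidal-at-$v_0$ constraint must not conflict with the density statement of \cite[Theorem 16.3.2]{1203.0039}, but this follows because one is free to prescribe an arbitrary Bernstein component at one auxiliary place in the construction of the automorphic test form underlying that theorem.
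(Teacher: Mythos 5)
Your strategy is essentially the one the paper uses: invoke the Sakellaridis--Venkatesh globalization machine (fed the metaplectic analogues of Lemma \ref{lem: openinregtemp}, Proposition \ref{prop: suppwhitplanch}, and Proposition \ref{prop: sepctralgap2}), and then force cuspidality of the global theta lift to $\SO(2n+1,\A_k)$ by prescribing a supercuspidal $\psi_{\tilde N}$-generic local component at one auxiliary finite place $v_0\notin S$. The paper states exactly this argument, so you have the same mechanism.

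There is one small but genuine omission: the paper explicitly fixes $\tilde\tau\in\Irr_{\cusp,\genpsi{\psi_{\tilde N}}}\tilde G(k_{v_0})$ \emph{other than the odd Weil representation when $n=1$}. This exclusion is necessary. The step in your argument where ``Kudla's supercuspidal theorem forces the local theta lift of $\tilde\pi_{v_0}$ to land on the correct rung of the Witt tower'' tacitly assumes that the first occurrence of $\tilde\tau$ in the odd orthogonal tower is at $\SO(2n+1)$, equivalently that $\thetatwo_\psi(\tilde\tau)$ is supercuspidal. For a generic supercuspidal $\tilde\tau$ of $\Mp_n$ this is true by \cite[Theorem 2.2]{MR1983781} \emph{unless} $n=1$ and $\tilde\tau$ is the odd Weil representation, in which case the local lift to $\SO(3)$ is the Steinberg representation rather than a supercuspidal, the lift to $\SO(1)$ is non-zero, and the Rallis tower argument no longer yields cuspidality of the global lift. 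So for $n=1$ you must choose $\tilde\tau$ away from the odd Weil representation; once that is stipulated your argument is complete and matches the paper.
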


Indeed we can fix an auxiliary finite place $v\not\in S$ and fix $\tilde\tau \in\Irr_{\cusp, \genpsi{\psi_{\tilde N}}}\tilde G(k_v)$
(other than the odd Weil representation in the case $n=1$). Then for $\tilde\pi\in \Cusp_{\genpsi{\psi_{\tilde N}}}\tilde G(\A_k)$ such that
$\tilde\pi_v=\tilde\tau$, its theta $\psi^{-1}$-lift is cuspidal, i.e., $\tilde\pi\in \Cusp^\circ_{\genpsi{\psi_{\tilde N}}}\tilde G(\A_k)$.
The globalization result shows that
\[
\{\tilde\pi_{S}:\tilde\pi\in\Cusp_{\genpsi{\psi_{\tilde N}}}^\circ\tilde G(\A_k),\,\tilde\pi_v=\tilde\tau\}\cap\Irr_{\temp,\genpsi{\psi_{\tilde N}},\reg}\tilde G(k_{S})
\]
is dense in $\Irr_{\temp,\genpsi{\psi_{\tilde N}},\reg}\tilde G(k_{S})$.

\section{The real case}
\label{app: real}

Suppose that $F = \R$. The formal degree conjecture is known for connected reductive algebraic groups over $\R$ (see \cite[Proposition~2.1]{MR2350057}).
In this section, we also establish the conjecture in the case of $\Mp_n(\R)$. As a consequence, we deduce Theorem \ref{thm: main} for $F = \R$.

Let $\tilde \pi\in \Irr_{\sqr} \Mp_n(\R)$.
By \cite[Theorem 3.3]{MR1638210}\footnote{The result in [loc.~cit.] is stated in terms of a double cover $\widetilde{\OO}(p,q)$ of $\OO(p,q)$,
but our convention for the theta lift in \S \ref{s: so} agrees with the one in [ibid., Corollary 5.3].
Also note that the condition $(-1)^q = (-1)^n$ is equivalent to the triviality of the discriminant of the associated quadratic space.},
there exists a unique $\sigma \in \Irr_{\sqr} \SO(p,q)$ with $p+q=2n+1$ and $(-1)^q=(-1)^n$ such that $\tilde \pi$ is the theta lift of $\sigma$ to
$\Mp_n(\R)$ (with respect to $\psi$).
Let $\phi : W_\R \rightarrow \Sp_n(\C)$ be the $L$-parameter of $\sigma$.
We also call $\phi$ the $L$-parameter of $\tilde\pi$.
Let $\gamma^{\ari}(s, \tilde\pi, \Ad, \psi)$ be the Artin factor attached to $\phi$.

\begin{proposition}
\label{prop: Mp(R)}
The formal degree conjecture holds for $\Mp_n(\R)$, i.e.,
\[
 d_\psi = \abs{2}^n 2^n \gamma^{\ari}(1, \tilde\pi, \Ad, \psi) d_{\tilde \pi}
\]
for any $\tilde \pi \in \Irr_{\sqr} \Mp_n(\R)$.
\end{proposition}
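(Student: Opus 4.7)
The plan is to reduce the statement to the already-known formal degree conjecture for the connected reductive group $\SO(p,q)$ via the theta correspondence, and then to compare the two formal degrees using Harish-Chandra's explicit formula for discrete series.

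First, since $\tilde\pi$ is by definition the theta lift of $\sigma$ and their common $L$-parameter is $\phi$, the adjoint $\gamma$-factor satisfies
\[
 \gamma^{\ari}(s, \tilde\pi, \Ad, \psi) = \gamma^{\ari}(s, \sigma, \Ad, \psi)
\]
for all $s \in \C$. On the other hand, by \cite[Proposition~2.1]{MR2350057} the formal degree conjecture holds for the connected reductive algebraic group $\SO(p,q)$ over $\R$, giving
\[
 d_\psi^{\SO(p,q)} = \abs{\mathcal{S}_\phi} \gamma^{\ari}(1, \sigma, \Ad, \psi) d_\sigma.
\]
Hence Proposition \ref{prop: Mp(R)} is equivalent to the assertion that the ratio of formal degrees satisfies
\[
 \frac{d_{\tilde\pi}}{d_\psi^{\Mp_n}} = \frac{\abs{\mathcal{S}_\phi}}{\abs{2}^n 2^n} \cdot \frac{d_\sigma}{d_\psi^{\SO(p,q)}}.
\]

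The next step is to establish this ratio by explicit computation. Both $\tilde\pi$ and $\sigma$ are discrete series representations of real groups (the case of $\Mp_n(\R)$ is treated as in \cite{MR1638210}), so Harish-Chandra's formula expresses $d_{\tilde\pi}$ and $d_\sigma$ as products over the positive roots of linear functionals evaluated at the respective Harish-Chandra parameters, times explicit constants depending on the measure normalizations. The theta correspondence of \cite{MR1638210} (extending work of Adams, Moeglin, and Li) identifies the Harish-Chandra parameter of $\tilde\pi$ with that of $\sigma$ up to the standard $\rho$-shift coming from the see-saw pair $(\Mp_n, \OO(p,q))$. This shift is precisely encoded in the $L$-parameter $\phi$, so the two products over positive roots match term by term after accounting for the change of root system.

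Finally, one must carefully track the Haar measure normalizations. The measure $d_\psi^{\Mp_n}$ is defined via the Lie algebra $\mathfrak{sp}_n$ and the self-dual measure on $\R$, whereas $d_\psi^{\SO(p,q)}$ is defined with the convention in \cite{MR1458303,MR2425185}. Since $\SO(p,q)$ is orthogonal, our convention differs from that of [loc.~cit.] by a power of $|2|$, as noted in the introduction. A direct calculation of the ratio of gauge forms on $\mathfrak{sp}_n$ and $\mathfrak{so}(p,q)$ (both of which have the same dimension $n(2n+1)$) combined with the $|\mathcal{S}_\phi|$-contribution from the $L$-packet structure will yield the factor $\abs{2}^n 2^n / \abs{\mathcal{S}_\phi}$.

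The main obstacle is precisely the bookkeeping in this last step: matching the Harish-Chandra polynomial for $\tilde\pi$ on the metaplectic side against that of $\sigma$ on the orthogonal side requires a careful analysis of the root systems and of the normalizations of invariant forms, and isolating the Haar-measure discrepancy $\abs{2}^n 2^n$ requires comparing the canonical Lie-algebraic measure of \cite{MR2425185} for $\OO$ with the one used throughout this paper. Once the constant is pinned down in a single non-trivial example (for instance, the odd Weil representation of $\Mp_1(\R)$, where both sides can be computed by hand as in \cite{MR2876383}), the result extends to all $\tilde\pi$ because the ratio depends only on the group and not on the representation.
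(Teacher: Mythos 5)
Your plan is a reasonable outline and is broadly in the same spirit (Harish-Chandra's formula for formal degrees, the theta correspondence with $\SO(p,q)$, the explicit matching of Harish-Chandra parameters from \cite{MR1638210}), but the route you take to pin down the overall constant is genuinely different from — and more laborious than — the paper's. You propose to compare $d_{\tilde\pi}/d_\psi^{\Mp_n}$ directly with $d_\sigma/d_\psi^{\SO(p,q)}$, which forces you to compare Haar measures on two different groups with different Lie algebras and different (even internally inconsistent, per the paper's conventions) normalizations, and to track the $\abs{\mathcal S_\phi}$-factor across the L-packet distributed among the various inner forms $\SO(p,q)$. You correctly identify this as "the main obstacle," but the paper avoids it entirely by a neat internal calibration: Harish-Chandra's formula $d_{\tilde\pi}/d_\psi^{\Sp_n}=c_\psi\abs{\varpi(\lambda_{\tilde\pi})}$ is applied on $\Mp_n(\R)$ alone, for both \emph{non-genuine} and genuine representations. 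For a non-genuine $\tilde\pi$, i.e.\ a discrete series of $\Sp_n(\R)$, the formal degree conjecture is already known, and together with \cite[Lemma 2.2]{MR2350057} (applied to the $L$-parameter valued in $\SO(2n+1,\C)$) this pins down the product of unknown constants as $c_\psi c_\psi'=2^{-n}$. Then for genuine $\tilde\pi$ the Harish-Chandra parameter is read off from $\sigma$ via \cite{MR1638210}, and \cite[Lemma 2.2]{MR2350057} applied to the $L$-parameter now valued in $\Sp_n(\C)$ introduces exactly a further $2^{-n}$ (a root-length effect between $\SO(2n+1,\C)$ and $\Sp_n(\C)$), giving the stated constant. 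No direct Haar measure comparison between $\Mp_n$ and $\SO(p,q)$ is needed, nor is any $\abs{\mathcal S_\phi}$ bookkeeping on the orthogonal side, nor the example of $\Mp_1(\R)$. Your approach could in principle be made to work, but the $\abs{\mathcal S_\phi}$-versus-$2^n$ bookkeeping and the cross-group gauge-form comparison are exactly where it is easy to pick up a stray factor of $2$, and you have not actually resolved them.
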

\begin{proof}
Let $\mathfrak{sp}_n(\R)$ be the Lie algebra of $\Sp_n(\R)$ and $\mathfrak{t}$ the Cartan subalgebra of $\mathfrak{sp}_n(\R)$ given by
\[
 \mathfrak{t} = \left\{ \begin{pmatrix} & x \\ -{}^t x & \end{pmatrix} :
 x = \left( \begin{smallmatrix} &&x_1 \\ &\iddots& \\ x_n && \end{smallmatrix} \right), \, x_1, \dots, x_n \in \R \right\}.
\]
For $i=1,\dots,n$, let $e_i \in \mathfrak{t}$ be the element with $x_i=1$ and $x_j=0$ if $j \ne i$.
We now apply \cite[p.~164, Corollary]{MR0439994} to any irreducible (but not necessarily genuine) square-integrable representation $\tilde \pi$ of
$\Mp_n(\R)$ with Harish-Chandra parameter $\lambda_{\tilde \pi} \in \sqrt{-1} \mathfrak{t}^*$.
Then there exists a constant $c_\psi > 0$ which does not depend on $\tilde \pi$ such that
\[
 d_{\tilde \pi} / d^{\Sp_n}_{\psi} = c_\psi \abs{\varpi(\lambda_{\tilde\pi})},
\]
where
\[
 \varpi = \prod_{1 \le i < j \le n} (e_i-e_j)(e_i+e_j) \cdot \prod_{i=1}^n 2e_i \in \Sym(\mathfrak{t}).
\]
If $\tilde \pi$ is non-genuine, then by \cite[Lemma 2.2]{MR2350057} applied to the $L$-parameter $\phi: W_\R \rightarrow \SO(2n+1,\C)$ of $\tilde \pi$
regarded as a representation of $\Sp_n(\R)$, there exists a constant $c'_\psi > 0$ which does not depend on $\tilde \pi$ such that
\[
 \abs{\varpi(\lambda_{\tilde\pi})} = c'_\psi \abs{\gamma^{\ari}(1, \tilde\pi, \Ad, \psi)}^{-1}
\]
where the $\gamma$-factor on the right is the Artin factor attached to $\phi$.
Since we already know that the formal degree conjecture holds for $\Sp_n(\R)$ (see \cite[Proposition 2.1]{MR2350057}), we must have $c_\psi c_\psi' = 2^{-n}$.

Now assume that $\tilde \pi$ is genuine.
Let $\sigma \in \Irr_{\sqr} \SO(p,q)$ be the representation associated to $\tilde\pi$ as above.
By \cite[Theorem 3.3]{MR1638210}, the Harish-Chandra parameter of $\sigma$ is given explicitly in terms of $\lambda_{\tilde\pi}$ via the
orbit correspondence in [ibid., 1.13].
Hence, by \cite[Lemma 2.2]{MR2350057} applied to the $L$-parameter $\phi : W_\R \rightarrow \Sp_n(\C)$ of $\sigma$, and in view of the definition of the
$L$-parameter of $\tilde\pi$, we see that
\[
 \abs{\varpi(\lambda_{\tilde\pi})} = 2^{-n} c'_\psi \gamma^{\ari}(1, \tilde\pi, \Ad, \psi)^{-1}
\]
for the same constant $c_{\psi}'$.
Note that the factor $2^{-n}$ arises from the difference between the length of roots for $\SO(2n+1,\C)$ and that for $\Sp_n(\C)$.
Also note that $\gamma^{\ari}(1, \tilde\pi, \Ad, \psi)$ is a positive real number.
This proves the proposition.
\end{proof}

We will consider the theta correspondence for the dual pair $(\Mp_n(\R), \SO(2n+1, \R))$
where $\SO(2n+1, \R)$ is the split odd special orthogonal group as in \S \ref{s: so}.
Set $\mathfrak{g}' = \Lie \Mp_n(\R) \otimes_{\R} \C$ and $\mathfrak{g}'' = \Lie \SO(2n+1,\R) \otimes_\R \C$.
Let $K'$ and $K''$ be the standard maximal compact subgroups of $\Mp_n(\R)$ and $\SO(2n+1,\R)$ respectively.
We will write $\Irr\SO(2n+1,\R)$ for the set of equivalence classes of irreducible $(\mathfrak{g}'',K'')$-modules which we identify with
infinitesimal equivalence classes of irreducible admissible continuous representations of $\SO(2n+1, \R)$ on locally convex spaces.
Similarly for $\Irr\Mp_n(\R)$.
Analogously to \S \ref{s: so}, the Howe duality \cite{MR985172} associates to each $\sigma \in \Irr \SO(2n+1, \R)$ an admissible
$(\mathfrak{g}',K')$-module $\Thetaone_\psi(\sigma)$ of finite length admitting a unique irreducible quotient $\thetaone_\psi(\sigma)$,
where we have used \cite[Corollary 5.3]{MR1638210} (instead of \cite[Corollary 6.4]{MR2999299} in the $p$-adic case).
Similarly, it associates to each $\tilde\pi \in \Irr \Mp_n(\R)$ either zero or an irreducible $(\mathfrak{g}'',K'')$-module $\thetatwo_\psi(\tilde\pi)$.

\begin{proposition}
\label{prop: theta-real}
The Howe duality defines a bijection
\[
 \thetaone_\psi : \Irr_{\sqr, \gen} \SO(2n+1, \R) \longrightarrow \Irr_{\sqr, \genpsi{\psi_{\tilde N}^{-1}}}\Mp_n(\R)
\]
whose inverse is
\[
 \thetatwo_\psi : \Irr_{\sqr, \genpsi{\psi_{\tilde N}^{-1}}}\Mp_n(\R) \longrightarrow \Irr_{\sqr, \gen} \SO(2n+1, \R).
\]
\end{proposition}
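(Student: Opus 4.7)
The plan is to transport the argument of Propositions \ref{prop: theta} and \ref{prop: thetatemp} to the archimedean setting, with \cite[Theorem 3.3]{MR1638210} taking the place of the Gan--Savin square-integrability result \cite[Theorem 8.1]{MR2999299}. As noted in the footnote to Proposition \ref{prop: thetatemp}, the Bessel-model route (as opposed to the streamlined Howe-duality route) is precisely the one designed to also cover the real case, so I would retain it throughout.

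\emph{Genericity preservation.} The twisted Jacquet module computation \cite[Proposition 2.1]{MR2059223} is a statement about the Schr\"odinger model of $\Omega_\psi$ and applies verbatim in the real case, using smooth Fr\'echet induction in the Casselman--Wallach sense. Running the Frobenius reciprocity chain \eqref{eq: thetawhit}, I deduce that if $\sigma\in\Irr_{\gen}\SO(2n+1,\R)$ and $\thetaone_\psi(\sigma)\ne 0$, then $\thetaone_\psi(\sigma)^\vee$ is $\psi_{\tilde N}$-generic; the MVW-type identity $\tilde\pi^\vee=\tilde\pi\circ\widetilde{\Ad}_\delta$ used in the proof of Proposition \ref{prop: theta} (\cite{MR1041060}, \cite{MR3053009}) has a real version and upgrades this to $\psi_{\tilde N}^{-1}$-genericity of $\thetaone_\psi(\sigma)$ itself. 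Dually, for $\tilde\pi\in\Irr_{\genpsi{\psi_{\tilde N}^{-1}}}\Mp_n(\R)$, any irreducible subquotient of $\thetatwo_\psi(\tilde\pi)$ carries a Bessel model of type $(\bes,\psi_\bes)$.

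\emph{Assembling the bijection.} By \cite[Theorem 3.3]{MR1638210}, the $\psi$-theta correspondence yields a bijection $\coprod_{(p,q)}\Irr_{\sqr}\SO(p,q)\longleftrightarrow \Irr_{\sqr}\Mp_n(\R)$ over the signatures with $p+q=2n+1$ and $(-1)^q=(-1)^n$, and hence preserves square-integrability in both directions. Combined with the first step, this immediately gives a well-defined injection $\thetaone_\psi:\Irr_{\sqr,\gen}\SO(2n+1,\R)\hookrightarrow\Irr_{\sqr,\genpsi{\psi_{\tilde N}^{-1}}}\Mp_n(\R)$. For surjectivity, given $\tilde\pi$ on the right, its Moeglin preimage $\sigma=\thetatwo_\psi(\tilde\pi)\in\Irr_{\sqr}\SO(p,q)$ admits a Bessel model by the dual half of the first step; the archimedean analog of Proposition \ref{prop: supp} then forces $\sigma$ to be generic, which in turn forces $(p,q)$ to correspond to the split form of $\SO(2n+1)$, completing the argument.

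\emph{Main obstacle.} The crux is the real analog of Proposition \ref{prop: supp}: a tempered representation of $\SO(2n+1,\R)$ admitting a $(\bes,\psi_\bes)$-model is generic. The algebraic skeleton of Lemma \ref{L: supp1}---the chain of subgroups $\bes^k$ with characters $\wgt^s\psi_{\bes^k}$, and the partial Fourier-transform identities exchanging $(\bes^k,\wgt^s\psi_{\bes^k})$- and $(\bes^{k+1},\wgt^{s-1}\psi_{\bes^{k+1}})$-models---is purely formal and descends to the smooth Fr\'echet category without change. Iterating reduces to a $(\bes^2,\wgt^{n-1}\psi_{\bes^2})$-model, and the dichotomy in the proof of Proposition \ref{prop: supp} then yields either a non-trivial character of $\ON_{1,2}$ (hence genericity), or else the exponent $\inj_{1,1}(t)\mapsto |t|^{-\frac12}$ in a Jacquet module along the standard parabolic with Levi $\GL_1\times\SO(2n-1)$, which is excluded by Casselman's temperedness criterion in the archimedean setting. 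This is the only place where genuine care beyond transcribing the $p$-adic argument is required, and it is a routine adaptation to the real Jacquet module machinery.
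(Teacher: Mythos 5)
Your approach is genuinely different from the paper's. Rather than transporting the $p$-adic Jacquet module argument, the paper follows the explicit construction of \cite[\S 3]{MR3047069}: it builds a non-zero $\Mp_n(\R)\times\SO(2n+1,\R)$-equivariant continuous bilinear map from $\Omega_\psi\times\Whit^\psi(\sigma)$ into the Schwartz space $\mathcal{C}(\tilde{N}\bs\tilde{G},\psi_{\tilde N}')$ (where $\psi_{\tilde N}'$ is conjugate to $\psi_{\tilde N}^{-1}$), then passes between the smooth Fr\'echet completion and $K$-finite vectors to identify the closure of the span of the image with $\thetaone_\psi(\sigma^\vee)$, using the Howe duality \cite{MR985172} together with \cite[Corollary 5.3]{MR1638210}. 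The opposite direction is handled by the dual construction of \cite[\S 3.2]{MR3047069}, and the Bessel machinery from Lemma \ref{L: supp1} enters only to correct a non-vanishing error in [loc.~cit.], reducing to $B(W)=\int_{F^*}W(\diag(t,I_{2n-1},t^{-1}))\abs{t}^{1-n}\,dt$ and establishing $B\not\equiv 0$ by a Fourier-transform argument. No archimedean analogue of Proposition \ref{prop: supp} is proved or invoked.

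The key gap in your proposal is the claim that the twisted Jacquet module computation \cite[Proposition 2.1]{MR2059223} and the Frobenius reciprocity chain \eqref{eq: thetawhit} ``apply verbatim in the real case.'' The $p$-adic argument depends on twisted coinvariants being an exact functor on smooth representations and on the clean isomorphism $(\Omega_\psi)_{\ON,\psi_{\ON}}\simeq\ind^{\Mp_n}_{\tilde N}(\psi_{\tilde N}^{-1})$; over $\R$, the corresponding Whittaker/coinvariant functor on Casselman--Wallach modules involves topological closures, Lie-algebra homology, and a delicate Frobenius reciprocity for the non-compact group $\tilde{N}$, while compactly supported induction from $\tilde{N}$ has no literal counterpart in this category. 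The paper's explicit-integral construction is designed precisely to sidestep these issues. Likewise, your ``main obstacle'' --- an archimedean version of Proposition \ref{prop: supp} --- is far from a routine transcription: the real temperedness criterion runs through asymptotic expansions of matrix coefficients and the relation between $\mathfrak{n}$-homology and leading exponents (Casselman, Hecht--Schmid, Wallach), a substantially heavier input than the $p$-adic Jacquet module comparison in the original proof. Without detailed treatment of these two points, your proposal is a sketch of a different and harder route, not a proof; the paper's actual argument is engineered to avoid both.
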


\begin{proof}
We follow the argument in \cite[\S3]{MR3047069} which carries over to the real case.
(But we need to correct a mistake in the proof of [ibid., Proposition 1] --- see below.)
%\Erez{I removed the footnote for political reasons -- it's not very important anyways}
%\footnote{There is a similar gap in the proof of \cite[Proposition 2.2]{MR1983781}.}; see below.)
Let $\sigma \in \Irr_{\sqr, \gen} \SO(2n+1, \R)$ and let $\Whit^\psi(\sigma)$ be its (smooth) Whittaker model with respect to
the non-degenerate character $\psi_{\ON}$ of $\ON$ given by $\psi_{\ON}(u)=\psi(u_{1,2}+\dots+u_{n,n+1})$.
As in \cite[\S 3.1]{MR3047069}, we can explicitly construct a non-zero $\Mp_n(\R) \times \SO(2n+1,\R)$-equivariant continuous bilinear map
\begin{equation}
\label{eq: thetareal1}
 \Omega_{\psi} \times \Whit^\psi(\sigma) \longrightarrow \mathcal{C}(\tilde{N} \bs \tilde{G}, \psi'_{\tilde{N}}),
\end{equation}
where $\mathcal{C}(\tilde{N} \bs \tilde{G}, \psi_{\tilde{N}}') \subset L^2(\tilde{N} \bs \tilde{G}, \psi_{\tilde{N}}')$
is the Schwartz space (cf.~\cite[\S 15.3]{MR1170566}) and $\psi'_{\tilde N}$ is the genuine character of $\tilde N$ whose
restriction to $N'$ is the non-degenerate character $\psi_{\tilde N}'(u) = \psi(u_{1,2} + \cdots + u_{n-1,n} + \tfrac{1}{2}u_{n,n+1})$.
Note that $\psi'_{\tilde N}$ is conjugate to $\psi_{\tilde{N}}^{-1}$.
Let $\rho$ be the closure of the span of the image of the map \eqref{eq: thetareal1}.
We will show that the underlying $(\mathfrak{g}',K')$-module of $\rho$ is $\thetaone_\psi(\sigma^\vee)$.

Let $K$ be the standard maximal compact subgroup of $\Mp_{n(2n+1)}(\R)$.
Let $\Omega_{\psi}^0$ be the subspace of $K$-finite vectors in $\Omega_{\psi}$ (regarded as a $(\Lie \Mp_{n(2n+1)}(\R) \otimes_{\R} \C,K)$-module)
and $\sigma^0$ the subspace of $K''$-finite vectors in $\sigma$.
Let $\rho^0$ be the span of the image of $\Omega_\psi^0 \times \sigma^0$ under \eqref{eq: thetareal1}.
Since $\Omega_\psi^0$ and $\sigma^0$ are dense in $\Omega_\psi$ and $\sigma$ respectively, $\rho^0$ is also dense in $\rho$.
In particular, $\rho^0$ is non-zero.
Consider the natural $(\mathfrak{g}',K') \times (\mathfrak{g}'',K'')$-equivariant map
\[
 \Omega_{\psi}^0 \longrightarrow\Hom_{\C}(\sigma^0,\rho^0)
\]
induced by \eqref{eq: thetareal1}.
Its image is locally $K''$-finite and hence is contained in $(\sigma^0)^\vee \otimes \rho^0$ by the admissibility of $\sigma^0$.
Namely, \eqref{eq: thetareal1} induces a $(\mathfrak{g}',K') \times (\mathfrak{g}'',K'')$-equivariant map
\[
 \Omega_{\psi}^0 \longrightarrow (\sigma^0)^\vee \otimes \rho^0.
\]
This map is surjective. Indeed, since $\sigma^0$ is irreducible, the image of the above map is necessarily of the form
$(\sigma^0)^\vee \otimes \rho'$ for some subrepresentation $\rho'$ of $\rho^0$,
and it is easy to see from the definition of $\rho^0$ that $\rho'=\rho^0$.
By the Howe duality \cite{MR985172} combined with \cite[Corollary 5.3]{MR1638210}, $\rho^0$ is a quotient of $\Thetaone_\psi((\sigma^0)^\vee)$ and hence is of finite length.
Since $\rho^0$ is unitarizable, it follows from the Howe duality that $\rho^0$ is irreducible and $\rho^0 = \thetaone_\psi((\sigma^0)^\vee)$ as required.

Thus, we have shown that $\thetaone_\psi(\sigma) \in \Irr_{\sqr, \genpsi{\psi_{\tilde N}^{-1}}} \Mp_n(\R)$ for all $\sigma \in \Irr_{\sqr, \gen} \SO(2n+1, \R)$.
Similarly, it follows from the explicit construction in \cite[\S 3.2]{MR3047069} that $\thetatwo_\psi(\tilde \pi) \in \Irr_{\sqr, \gen} \SO(2n+1, \R)$
for all $\tilde\pi \in \Irr_{\sqr, \genpsi{\psi_{\tilde N}^{-1}}} \Mp_n(\R)$.
This proves the proposition.

Finally, we correct a mistake in the bottom of \cite[p.~239]{MR3047069}.
With the notation of \cite{MR3047069}, the stabilizer of $E_1$ is not $R_1$ but $R_0 = R_1 \rtimes H$.
Therefore we need to show that
$$\int_{R_1 \bs R_0} W(h)\, dh \neq 0$$
for some $W \in \Whit^\psi(\sigma)$.
Here by \cite[Lemma 3]{MR3047069}, this integral converges absolutely.
With the notation of \S \ref{s: so}, the left-hand side is equal to
$$\int_{\bes'\cap \ON\bs \bes'} W(h)\, dh.$$
Thus, applying the proof of Lemma \ref{L: supp1}, we can reduce the above non-vanishing to that of
$$B(W):=\int_{F^*} W(\diag(t,I_{2n-1},t^{-1}))\abs{t}^{1-n} \, dt$$
for some $W \in \Whit^\psi(\sigma)$.
Suppose on the contrary that $B\equiv 0$.
Choose $W \in \Whit^\psi(\sigma)$ such that $W(e) \ne 0$, and set $\phi(t):=W(\diag(t,I_{2n-1},t^{-1}))\abs{t}^{-n}$.
Then $\phi$ is integrable over $F$ and $\phi(1) \ne 0$.
For any $x \in F$, we have
\[
 \hat\phi(x) = \int_{F^*}\phi(t) \psi(tx) \abs{t} \, dt =
 B(\sigma(\inj_{1,2}(x))W)=0.
\]
This implies that $\phi = 0$, which is a contradiction. Thus $B$ is not identically zero.
\end{proof}

Recall that the local Langlands correspondence defines an injection
\[
 \JSlift : \Irr_{\gen} \SO(2n+1, \R) \longrightarrow \Irr \GL_{2n}(\R).
\]
Note that its injectivity follows from \cite{MR0506503,MR507800}.
%\Erez{Is injectivity of $\JSlift$ clear?}
By \cite{MR816396,MR1070599} and the definition of $\JSlift$, we have
\[
 \gamma^{\an}(s, \JSlift(\sigma) \times \tau, \psi)
 = \gamma^{\ari}(s, \JSlift(\sigma) \times \tau, \psi)
 = \gamma^{\ari}(s, \sigma \times \tau, \psi) = \gamma^{\an}(s, \sigma \times \tau, \psi)
\]
for $\sigma \in \Irr_{\gen}\SO(2n+1,\R)$ and $\tau \in \Irr_{\gen}\GL_m(\R)$, $m\ge1$.

\begin{proposition}
\label{prop: gamma-real}
\begin{enumerate}
\item
For any $\sigma\in\Irr_{\sqr,\gen} \SO(2n+1, \R)$ we have $\thetaone_\psi(\sigma) = \des_{\psi^{-1}}(\JSlift(\sigma))$.
\item
 We have
\[
 \gamma(s, \thetaone_\psi(\sigma) \times \tau, \psi) = \gamma(s, \sigma \times \tau, \psi)
\]
for $\sigma \in \Irr_{\sqr, \gen}\SO(2n+1,\R)$ and $\tau \in \Irr_{\gen}\GL_m(\R)$, $m\ge1$.
\end{enumerate}

\end{proposition}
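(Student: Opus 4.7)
The plan is to mirror the globalization arguments of Propositions \ref{prop: triangle} and \ref{prop: gamma2}, with a real place of a chosen number field playing the role of $v_0$. Fix a totally real number field $k$ with a real place $v_0$ such that $k_{v_0}=\R$, and extend $\psi$ to a non-trivial character $\varPsi$ of $k\bs\A_k$ with $\varPsi_{v_0}=\psi$.

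For part (1), first globalize $\sigma$ to an irreducible globally generic cuspidal automorphic representation $\Sigma$ of $\SO(2n+1,\A_k)$ with $\Sigma_{v_0}=\sigma$, prescribing the component at one auxiliary finite place to be supercuspidal so that the global theta $\varPsi$-lift $\tilde\Pi$ of $\Sigma$ to $\Mp_n(\A_k)$ is cuspidal and hence non-zero, irreducible and globally $\varPsi_{\tilde N}^{-1}$-generic by \cite{MR1353315, MR2330445}. Let $\Pi$ denote the weak Cogdell--Kim--Piatetski-Shapiro--Shahidi lift of $\Sigma$ to $\GL_{2n}(\A_k)$; then $\Pi_{v_0}=\JSlift(\sigma)$ by \cite[Theorem E]{MR2058617}. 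The global descent $\des_{\varPsi^{-1}}(\Pi)$ is irreducible by \cite{MR3009748} (or by the alternative argument in the proof of Proposition \ref{prop: triangle}) and coincides with $\tilde\Pi$ by \cite[Theorem 11.2]{MR2848523}. Localizing at $v_0$, the theta side yields $\thetaone_\psi(\sigma)$ by Proposition \ref{prop: theta-real}, while the real analogue of the local-global compatibility of descent \cite[Theorem 6.2]{1401.0198} identifies $\des_{\varPsi^{-1}}(\Pi)_{v_0}$ with $\des_{\psi^{-1}}(\JSlift(\sigma))$, establishing part (1).

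Granted part (1), part (2) reduces to the real analogue of \eqref{eq: polek}, namely $\gamma(s,\des_{\psi^{-1}}(\JSlift(\sigma))\times\tau,\psi)=\gamma(s,\JSlift(\sigma)\times\tau,\psi)$, since $\gamma(s,\JSlift(\sigma)\times\tau,\psi)=\gamma(s,\sigma\times\tau,\psi)$ is already recorded in the real case. This is again obtained by globalization: globalize $\tau$ to a cuspidal automorphic representation $\Tau$ of $\GL_m(\A_k)$ with $\Tau_{v_0}=\tau$, apply the global functional equations for $\tilde\Pi\times\Tau$ and $\Pi\times\Tau$, invoke the known $p$-adic identity at every finite place (Theorem \ref{thm: bijection}), and arrange the remaining archimedean components to be controllable (e.g.\ by choosing $k$ so that the other archimedean places are complex and appealing to \cite{MR1346217}).

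The main obstacle is the real globalization of $\sigma$ needed in both parts. Appendix \ref{app: globalization} as stated only globalizes at finite sets of non-archimedean places, so extending the application of \cite[Theorem 16.3.2]{1203.0039} to include the real place $v_0$ requires the real analogues of Lemma \ref{lem: openinregtemp} and Proposition \ref{prop: sepctralgap} at $v_0$; the former rests on the classification of real square-integrable generic representations and the latter on the archimedean Ramanujan bounds \cite{MR2053600, MR2245761}. A subsidiary but nontrivial technical point is verifying the real archimedean analogue of \cite[Theorem 6.2]{1401.0198}, namely that the residue at $s=\tfrac12$ of the Eisenstein series defining the global descent projects at $v_0$ onto the local archimedean $\des_{\psi^{-1}}$ of $\JSlift(\sigma)$.
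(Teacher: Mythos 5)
Your proposal goes down a genuinely different path from the paper's, and along the way you create for yourself a substantial technical problem that the paper deliberately avoids.

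The paper's proof starts from the simple but decisive observation that for $F=\R$, any $\pi=\JSlift(\sigma)$ with $\sigma\in\Irr_{\sqr,\gen}\SO(2n+1,\R)$ is automatically of the form $\pi_1\times\dots\times\pi_n$ with the $\pi_i$ pairwise inequivalent discrete series of $\GL_2(\R)$ having trivial central character (since $\GL_m(\R)$ has no discrete series for $m>2$). This means one should globalize on the $\GL_2$ side: each $\pi_i$ is the archimedean component of some cuspidal $\Pi_i$ on $\GL_2(\A_\Q)$ with trivial central character, and by twisting by a quadratic Hecke character one may further force $L(\tfrac12,\Pi_i)\ne 0$ (Waldspurger). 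One then sets $\Pi=\Pi_1\times\dots\times\Pi_n$, forms the global descent $\tilde\Pi=\des_{\varPsi^{-1}}(\Pi)$, takes the global theta lift $\Sigma$ to $\SO(2n+1,\A_\Q)$, and deduces $\Sigma_\infty=\sigma$ from \cite[Theorem E]{MR2058617} and the injectivity of $\JSlift$; part (2) then follows by multiplicativity and the global functional equation, with Proposition \ref{prop: gamma2} controlling the finite places.

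Your proposal instead globalizes $\sigma$ itself on $\SO(2n+1)$, in parallel with Proposition \ref{prop: triangle}. As you honestly flag, this requires extending Appendix \ref{app: globalization} to allow $v_0$ archimedean, i.e.\ real analogues of Lemma \ref{lem: openinregtemp} and Proposition \ref{prop: sepctralgap} and an archimedean version of \cite[Theorem 16.3.2]{1203.0039}. That is not a minor technicality: the appendix's machinery (Fell topology arguments, Tadi\'c's results, the half-integrality of cuspidal exponents) is set up purely in the non-archimedean context, and none of it is established for $\R$ in the paper. The paper circumvents this entirely by routing the globalization through $\GL_2(\Q)$, where everything is classical. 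So while your plan is in the right spirit (both proofs ultimately rest on global descent, theta lifting, and the Jiang--Soudry weak lift), you have missed the key simplification that makes the real case tractable, and as written the proof contains an unproven archimedean globalization step that the paper never needs.

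Two smaller remarks. First, the non-vanishing $L(\tfrac12,\Pi_i)\ne 0$ in the paper's proof plays the role that $L^S(\tfrac12,\Sigma)\ne 0$ played in Proposition \ref{prop: triangle} (ensuring the global theta lift lands cuspidally on $\SO(2n+1)$ rather than a smaller group); your version of part (1) would need to secure the analogous non-vanishing and you do not address this. Second, you correctly note the need for a real analogue of \cite[Theorem 6.2]{1401.0198} to identify $\tilde\Pi_\infty$ with the local descent; the paper does need this too (it enters through the notion of $\pi$ being ``good'' and \cite[Remark 3.7]{1404.2905}), so that particular ingredient is not an extra cost of your approach, just one you share.
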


\begin{proof}
For $\sigma\in\Irr_{\sqr,\gen} \SO(2n+1, \R)$, set $\pi = \JSlift(\sigma)$.
Then $\pi = \pi_1 \times \dots \times \pi_n$, where $\pi_1, \dots, \pi_n$ are pairwise inequivalent irreducible square-integrable
representations of $\GL_2(\R)$ with trivial central character.
We can find an irreducible cuspidal automorphic representation $\Pi_i$ of $\GL_2(\A_\Q)$ with trivial central character such that $\Pi_{i,\infty} = \pi_i$.
By twisting $\Pi_i$ by a quadratic character of $\Q^* \bs \A_\Q^*$ if necessary, we may assume furthermore that $L(\frac12, \Pi_i) \ne 0$
(see \cite[Lemme 41 and Th\'eor\`eme 4]{MR1103429}).
Set $\Pi = \Pi_1 \times \dots \times \Pi_n$, so that $\Pi_\infty = \pi$.
We may assume that $\psi = \varPsi_\infty$ for some non-trivial character $\varPsi$ of $\Q \bs \A_\Q$.
By \cite[Theorem 1.7]{MR1954940} and \cite{MR3009748}, the global descent $\tilde\Pi = \des_{\varPsi^{-1}}(\Pi)$ of $\Pi$ is an irreducible globally $\varPsi^{-1}_{\tilde N}$-generic cuspidal automorphic representations of $\Mp_n(\A_\Q)$.
Consider its global theta $\varPsi$-lift $\Sigma$ to $\SO(2n+1, \A_\Q)$.
By \cite[Proposition 3]{MR1353315} together with the local unramified theta correspondence, $\Sigma$ is an irreducible globally generic cuspidal automorphic representation of $\SO(2n+1, \A_\Q)$ and $\Pi$ is a weak lift of $\Sigma$.
Hence it follows from \cite[Theorem E]{MR2058617} and the injectivity of $\JSlift$ that $\Sigma_\infty = \sigma$.
Thus, $\thetaone_\psi(\sigma) = \tilde\Pi_\infty = \des_{\psi^{-1}}(\pi)$.
This proves the first part.
Also, the second part follows from the multiplicativity in $\tau$, the global functional equation, and Proposition \ref{prop: gamma2} (for finite places).
\end{proof}

As in \S \ref{s: so}, we conclude from Propositions \ref{prop: theta-real} and \ref{prop: gamma-real} that Theorem \ref{thm: bijection} holds for $F = \R$.
We can also deduce the following analogue of \cite[Corollary 3.4]{1404.2905} for $F = \R$ from the formula for the formal degree.

\begin{theorem}
Let $\pi\in\Irr_{\msqr}\GL_{2n}(\R)$ and $\tilde\pi=\des_{\psi^{-1}}(\pi) \in \Irr_{\sqr,\genpsi{\psi_{\tilde N}^{-1}}}\Mp_n(\R)$. Then
\[
\int_{N'}\tilde{J}(\tilde\pi(u) \tilde W, W',\Phi,\tfrac12)\psi_{\tilde N}(u)\,d_\psi u
=\eps^{\an}(\tfrac12,\pi,\psi) \tilde W(e) \whitform(M(\tfrac 12)W',\Phi,e, -\tfrac12)
\]
for $\tilde{W} \in \Whit^{\psi^{-1}}(\tilde{\pi})$, $W' \in \Ind(\Whit^{\psi}(\pi))$, and $\Phi \in \mathcal{S}(F^n)$.
\end{theorem}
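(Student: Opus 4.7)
The plan is to reverse the derivation used in the non-archimedean case: rather than deducing the formal degree formula from the Main Identity, we exploit the already-established formal degree formula (Proposition~\ref{prop: Mp(R)}) to pin down the otherwise-missing constant in the Main Identity. All the structural ingredients of \S\ref{s: mp}---the bilinear forms $[\cdot,\cdot]_{\tilde\pi}$ and $(\cdot,\cdot)_{\tilde\pi}$, the local functional equation \eqref{fe}, the residue calculation \eqref{eq: resk}, and hence the proportionality \eqref{eq: prop_Mp}---transcribe to $F=\R$ with only cosmetic modifications. Note that in the real case we always have $k=n$ in \eqref{eq: pi}, since each $\pi_i$ is necessarily a discrete series of $\GL_2(\R)$ with trivial central character.

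The first step is to show that for smooth $\tilde W\in\Whit^{\psi^{-1}}(\tilde\pi)$ and $\tilde W'=\whitform(M(\tfrac12)W',\Phi,\cdot,-\tfrac12)\in\Whit^\psi(\tilde\pi^\vee)$ the integral
\[
B(\tilde W,\tilde W') := \int_{N'}[\tilde\pi(u)\tilde W,\tilde W']_{\tilde\pi}\,\psi_{\tilde N}(u)\,d_\psi u
\]
converges absolutely. This is the main technical obstacle, and requires archimedean analogues of the Harish-Chandra Schwartz estimates on matrix coefficients of the square-integrable representation $\tilde\pi$ together with moderate-growth bounds on Whittaker functions---in particular a real analogue of \cite[Lemma 4.12]{1401.0198}. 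Once convergence is granted, $B$ transforms under $N'\times N'$ exactly like $\tilde W(e)\,\tilde W'(e)$, so by uniqueness of the Whittaker model
\[
B(\tilde W,\tilde W') = c_{\tilde\pi}\,\tilde W(e)\,\tilde W'(e)
\]
for a scalar $c_{\tilde\pi}$ depending only on $\tilde\pi$ and $\psi$. The task thereby reduces to showing $c_{\tilde\pi} = \eps^{\an}(\tfrac12,\pi,\psi)$.

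To identify $c_{\tilde\pi}$, I would reproduce the unfolding calculation from the proof of Theorem~\ref{thm: Mpn}---justified by the same type of archimedean convergence estimates as above---to obtain
\[
\int_{G'}[\tilde\pi(\tilde g)\tilde W_1,\tilde W_1']_{\tilde\pi}\,[\tilde\pi(\tilde g^{-1})\tilde W_2,\tilde W_2']_{\tilde\pi}\,d_\psi g = c_{\tilde\pi}\,(\tilde W_1,\tilde W_2')_{\tilde\pi}\,[\tilde W_2,\tilde W_1']_{\tilde\pi}
\]
for suitable $\tilde W_1'$. Combining with the definition of $d_{\tilde\pi}$ and the proportionality \eqref{eq: prop_Mp} (now with $k=n$) yields
\[
\frac{d_\psi}{d_{\tilde\pi}} = c_{\tilde\pi}\cdot\frac{\abs{2}^n\,2^n\,\gamma^{\an}(1,\pi,\Sym^2,\psi)}{\eps^{\an}(\tfrac12,\pi,\psi)}.
\]

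Finally, Proposition~\ref{prop: Mp(R)} gives $d_\psi/d_{\tilde\pi} = \abs{2}^n\,2^n\,\gamma^{\ari}(1,\tilde\pi,\Ad,\psi)$. Since the $L$-parameter of $\tilde\pi$ factors through $\iota : \Sp_n(\C)\hookrightarrow\GL_{2n}(\C)$ and the restriction of $\Sym^2$ on $\GL_{2n}(\C)$ to $\Sp_n(\C)$ is the adjoint representation (by a highest-weight/dimension count: both are irreducible of dimension $2n^2+n$ with highest weight $2e_1$), we have $\gamma^{\ari}(1,\tilde\pi,\Ad,\psi) = \gamma^{\ari}(1,\pi,\Sym^2,\psi)$; and over $\R$ the analytic and arithmetic $\gamma$-factors for $\Sym^2$ agree, so this common value equals $\gamma^{\an}(1,\pi,\Sym^2,\psi)$. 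Comparing the two expressions for $d_\psi/d_{\tilde\pi}$ forces $c_{\tilde\pi} = \eps^{\an}(\tfrac12,\pi,\psi)$, which is the Main Identity.
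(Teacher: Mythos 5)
Your overall strategy is exactly the paper's: reverse the $p$-adic derivation, establish that the left-hand side is proportional to $\tilde W(e)\,\whitform(M(\tfrac12)W',\Phi,e,-\tfrac12)$ with some constant $c_\pi$, run the unfolding argument of Theorem~\ref{thm: Mpn} to translate the problem into a formal-degree statement, identify $\Sym^2\rest_{\Sp_n(\C)}$ with $\Ad$, and then invoke Proposition~\ref{prop: Mp(R)} to pin down $c_\pi=\eps^{\an}(\tfrac12,\pi,\psi)$. The only genuine divergence from the paper is in how you establish the \emph{existence} of the constant $c_\pi$. The paper does not prove this locally: it notes that $\pi$ (being $\pi_1\times\cdots\times\pi_n$ with each $\pi_i$ a discrete series of $\GL_2(\R)$ of trivial central character) can be globalized as in Proposition~\ref{prop: gamma-real}, and then quotes \cite[Remark~3.7]{1404.2905} to conclude $\pi$ is ``good'' in the sense of \cite{1401.0198}, which by definition yields the proportionality with some constant. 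You instead propose a purely local route: prove absolute convergence of the Bessel-type integral $B$ and then invoke multiplicity one for Whittaker functionals.

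Your local route is plausible and more self-contained, but you should be aware of what it costs. In the archimedean setting, uniqueness of Whittaker functionals is a statement about \emph{continuous} functionals on the smooth Fr\'echet globalization, so absolute convergence of $B(\tilde W,\tilde W')$ alone is not enough---you would also need continuity of $\tilde W\mapsto B(\tilde W,\tilde W')$ (and of the resulting coefficient in $\tilde W'$), which means the convergence estimates must be uniform in the appropriate seminorms. This is precisely the kind of work that \cite[Remark~3.7]{1404.2905} packages (via a global argument) and that the paper then supplements with the five explicit archimedean substitutes (Wallach's bounds \cite{MR1170566,MR929683} in place of the $p$-adic Harish-Chandra Schwartz estimates of \cite{MR1989693}, and \cite[Lemmas~4.11, 4.12]{1401.0198}) needed for the unfolding. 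Your identification of $\Sym^2\rest_{\Sp_n(\C)}=\Ad$ via highest-weight/dimension and the use of Shahidi's archimedean $\gamma^{\an}=\gamma^{\ari}$ match the paper's citations to \cite{MR816396,MR1070599} and Proposition~\ref{prop: gamma-real}(1). In short: same proof, with one sub-step (existence of $c_\pi$) carried out by a local multiplicity-one argument rather than by citing goodness, at the price of a continuity verification you have not spelled out.
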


\begin{proof}
Let $\pi\in\Irr_{\msqr}\GL_{2n}(\R)$ be of the form \eqref{eq: pi}.
(We necessarily have $k=n$, i.e., $\pi=\pi_1\times\dots\times\pi_n$, where $\pi_1, \dots, \pi_n$ are pairwise inequivalent irreducible square-integrable
representations of $\GL_2(\R)$ with trivial central character.)
Since $\pi$ is the local component of an irreducible automorphic representation $\Pi$ as in the proof of Proposition \ref{prop: gamma-real},
it follows from \cite[Remark 3.7]{1404.2905} that $\pi$ is \emph{good} in the sense of \cite{1401.0198}.
In particular, there exists a constant $c_\pi$ such that
\[
\int_{N'}\tilde{J}(\tilde\pi(u) \tilde W, W',\Phi,\tfrac12)\psi_{\tilde N}(u)\,d_\psi u
= c_\pi \tilde W(e) \whitform(M(\tfrac 12)W',\Phi,e, -\tfrac12)
\]
for $\tilde{W} \in \Whit^{\psi^{-1}}(\tilde{\pi})$, $W' \in \Ind(\Whit^{\psi}(\pi))$, and $\Phi \in \mathcal{S}(F^n)$.
The argument in the proof of Theorem \ref{thm: Mpn} gives
\[
 c_\pi = \eps^{\an}(\tfrac12,\pi,\psi) \Longleftrightarrow d_\psi = \abs{2}^n 2^n \gamma(1, \pi, \Sym^2, \psi) d_{\tilde \pi},
\]
where we have used the following in the real case.

\begin{enumerate}
\item The integral $\tilde J$ converges absolutely uniformly near $s=-\frac12$ (cf.~\cite[Lemma 4.12]{1401.0198}).
\item Any Whittaker function $\tilde W \in \Whit^{\psi^{-1}}(\tilde{\pi})$ is square-integrable over $N' \bs G'$ (cf.~\cite[Theorem 15.3.4]{MR1170566}).
\item Instead of \cite[Corollaire III.1.2]{MR1989693}, we use \cite[Theorem 15.2.4]{MR1170566} combined with \cite[Theorem 4.5.3]{MR929683}.
\item Instead of \cite[Proposition II.4.5]{MR1989693}, we use \cite[Theorem 7.2.1]{MR929683}.
\item The integral
\[
\int_{K'}\int_{T'}\whitform(W_1',\Phi,\tilde{t}_2\tilde{k}_2,\tfrac12)\modulus_{B'}(t_2)^{-\frac12}\, dt_2\, dk_2
\]
converges absolutely. This follows from \cite[Lemma 4.11]{1401.0198}.
\end{enumerate}
On the other hand, by \cite{MR816396,MR1070599}, Proposition \ref{prop: gamma-real}(1), and the definitions of $\JSlift$ and the $L$-parameter of $\tilde\pi$, we have
\[
 \gamma(s, \pi, \Sym^2, \psi) = \gamma^{\ari}(s, \pi, \Sym^2, \psi) = \gamma^{\ari}(s, \tilde \pi, \Ad, \psi).
\]
Hence Proposition \ref{prop: Mp(R)} implies that $c_\pi = \eps^{\an}(\tfrac12,\pi,\psi)$.
\end{proof}

%\bibliographystyle{amsalpha}
%\bibliography{../Bibfiles/all}
%\end{document}

\def\cprime{$'$}
\providecommand{\bysame}{\leavevmode\hbox to3em{\hrulefill}\thinspace}
\providecommand{\MR}{\relax\ifhmode\unskip\space\fi MR }

\end{document}